\newtheorem{thm}{Theorem}[section]
\newtheorem{lem}[thm]{Lemma}
\newtheorem{prop}[thm]{Proposition}
\theoremstyle{definition}
\newtheorem{defn}[thm]{Definition}
\newtheorem{ex}[thm]{Example}
\newtheorem{remark}[thm]{Remark}
\numberwithin{equation}{section} \numberwithin{figure}{section}
\numberwithin{equation}{section}
\newcommand{\R}{\mathbb{R}}
\newcommand{\Q}{\mathbb{Q}}
\newcommand{\dfn}{\textbf} 
\title[Homotopy Algebra Structures on Twisted Tensor Products] {Homotopy Algebra Structures on Twisted Tensor Products and String Topology Operations}
\address{Department of Mathematics, The Graduate Center at CUNY, 365 Fifth Avenue, New York, NY 10016-4309}
\email{mmiller1@gc.cuny.edu}
\author{Micah Miller}
\begin{document}

\begin{abstract}
Given a $C_\infty$ coalgebra $C_*$, a strict dg Hopf algebra $H_*$, and
a twisting cochain $\tau:C_* \rightarrow H_*$ such that $Im(\tau)
\subset Prim(H_*)$, we describe a procedure for obtaining an
$A_\infty$ coalgebra on $C_* \otimes H_*$.  This is an
extension of Brown's work on twisted tensor products. We
apply this procedure to obtain an $A_\infty$ coalgebra model of the
chains on the free loop space $LM$ based on the $C_\infty$ coalgebra
structure of $H_*(M)$ induced by the diagonal map $M \rightarrow M
\times M$ and the Hopf algebra model of the based loop space given by $T(H_*(M)[-1])$.   When $C_*$ has cyclic $C_\infty$ coalgebra structure, we describe an $A_\infty$ algebra on $C_* \otimes H_*$.  This is used to give an explicit (non-minimal) $A_\infty$
algebra model of the string topology loop product.
Finally, we discuss a representation of the
loop product in principal $G$-bundles.

\end{abstract}
\maketitle
\tableofcontents

\section{Introduction}

Brown's theory of twisting cochains, outlined in \cite{B}, provides a way to model the total space of a bundle in terms of the base and fiber.  Given a principal bundle $G \rightarrow P \rightarrow M$ and a twisting cochain $\tau:C_*(M) \rightarrow C_*(G)$, Brown constructs a complex $(C_*(M) \otimes C_*(G), \partial_\tau)$ whose homology is isomorphic to $H_*(P)$.  If $Y$ is a $G$ space and $Y \rightarrow P \times_G Y \rightarrow M$ is the associated bundle, then there is a complex $(C_*(M) \otimes C_*(Y), \partial_\tau)$ whose homology is isomorphic to $H_*(P \times_G Y)$.  Quillen, in \cite{Q}, shows that when $Im(\tau)\subset Prim(H_*)$, the isomorphism is one of coalgebras.  There is an extensive literature on twisting cochains due to their wide ranging applications.  We have focused on these two results immediately related to this discussion.

In Section \ref{section:algebraicsetting}, we push Brown's theory to homotopy algebras.  That is, given a $C_\infty$ coalgebra $C_*$, a dg bialgebra $H_*$, and a twisting cochain $\tau:C_* \rightarrow H_*$ where $Im(\tau) \subset Prim(H_*)$, we define a twisted $A_\infty$ coalgebra on $C_* \otimes H_*$.   The twisted coalgebra structure is denoted $\{c_n^\tau:C_* \otimes H_* \rightarrow (C_* \otimes H_*)^{\otimes n} \}$.  The twisted term in Brown's differential is described by applying the coproduct on $C_*$, then applying $\tau$ to one of the factors, and finally using the multiplication in $H_*$.  The same idea is used for $c_1^\tau$, except we use the higher homotopies $\{c_n:C_* \rightarrow C_*^{\otimes n}\}$  of the $C_\infty$ coalgebra structure as well as the coproduct.  We use the same process to obtain $c_2^{\tau}$, except we use the maps $c_{n>2}$.  And the process continues for all $c_n^\tau$.  If $C_*$ is a strict differential graded coalgebra with $c_n=0$ for $n>2$, then the complex reduces to Brown's complex.  For this reason, we denote $c_1^\tau$ by $\partial_\tau$.  The following theorem is proved in Section \ref{section:algebraicsetting}.

\setcounter{thm}{8}
\setcounter{section}{3}
\begin{thm} 
Let $C_*$ be a $C_\infty$ coalgebra, $H_*$ a dg bialgebra, and $\tau:C_* \rightarrow H_*$ a twisting cochain such that $Im(\tau) \subset Prim(H_*)$.  The maps $\{\partial_\tau, c_2^\tau, c_3^\tau, \cdots \}$ define an $A_\infty$ coalgebra on $C_*  \otimes H_* $.
\end{thm}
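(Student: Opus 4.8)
The plan is to package the maps $\{\partial_\tau, c_2^\tau, c_3^\tau, \dots\}$ into a single algebraic object and verify one quadratic identity. Recall that a family $\{c_n : V \to V^{\otimes n}\}_{n\ge 1}$ is an $A_\infty$ coalgebra structure on $V$ precisely when the induced degree $-1$ derivation $\partial$ on the cobar construction $\Omega V = T(V[-1])$, defined on generators by the (shifted, signed) maps $c_n$ and extended as a derivation of the free tensor algebra, satisfies $\partial^2 = 0$. So the first step is to reduce the infinitely many $A_\infty$ coalgebra relations to the single statement that the derivation $\partial_\tau$ on $\Omega(C_*\otimes H_*) = T((C_*\otimes H_*)[-1])$ whose components are the $c_n^\tau$ squares to zero. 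This is pure bookkeeping, but it is what makes the rest tractable: a derivation is determined by its action on generators and $\partial_\tau^2$ is again a derivation, so it suffices to check $\partial_\tau^2 = 0$ on the generating space $C_*\otimes H_*$.

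Next I would split $\partial_\tau = \partial_{\mathrm{un}} + t$, where $\partial_{\mathrm{un}}$ collects the terms in which $\tau$ is never applied and $t$ collects the genuinely twisted terms, namely those that feed at least one $C_*$-factor through $\tau$ and multiply it into $H_*$ via $\mu_{H}$. The untwisted part $\partial_{\mathrm{un}}$ is exactly the cobar differential of the tensor product of the $A_\infty$ coalgebra $C_*$ (coming from its $C_\infty$ structure $\{c_n\}$) with the strict dg coalgebra underlying $H_*$; hence $\partial_{\mathrm{un}}^2 = 0$ is equivalent to the statement that this tensor product is an $A_\infty$ coalgebra, whose only inputs are the $C_\infty$ coalgebra relations for $\{c_n\}$, the identity $d_H^2 = 0$, and coassociativity of the coproduct $\Delta_H$. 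I would record this as a lemma, or cite the tensor-product construction for $A_\infty$ coalgebras, so that it can be used as a black box.

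The crux is then the identity $\partial_{\mathrm{un}} t + t\,\partial_{\mathrm{un}} + t^2 = 0$, which I would verify by sorting the output according to how many times $\tau$ is applied. The terms linear in $\tau$ reproduce, after passing through the shift, the twisting-cochain (Maurer--Cartan) equation for $\tau$ interacting with the higher $C_\infty$ coproducts $c_{n}$; these are what force the twisted part to be compatible with $\partial_{\mathrm{un}}$. The terms with two or more occurrences of $\tau$ are where the hypothesis $Im(\tau)\subset Prim(H_*)$ is indispensable: to compare the slots into which $\partial_\tau^2$ lands, one must commute $\tau$ past the $H_*$-coproduct used to distribute the $H_*$-factor across the $k$ outputs of $c_k^\tau$, and primitivity gives exactly $\Delta_H\circ\tau = (\tau\otimes 1 + 1\otimes\tau)$, so a $\tau$-image splits cleanly into the two adjacent tensor factors. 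Simultaneously the bialgebra compatibility $\Delta_H\mu_H = (\mu_H\otimes\mu_H)\,\sigma\,(\Delta_H\otimes\Delta_H)$ reconciles ``multiply the $\tau$-images into $H_*$, then comultiply'' with ``comultiply first, then multiply'', so the redistributed copies of $h$ and of the $\tau(c^{(i)})$ recombine and cancel against the $t^2$ contribution.

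I expect the main obstacle to be the combinatorial and sign bookkeeping in this last step rather than any single conceptual point. Each application of $c_k^\tau$ is indexed by a choice of $c_n$ with $n\ge k$ together with a partition of its $n$ output $C_*$-factors into the $k$ factors retained in the $C_*$-slots and the $n-k$ factors sent through $\tau$ and absorbed, via $\mu_H$ after applying $\Delta_H$ to $h$, into the various $H_*$-slots. Composing two such operations produces a double sum whose terms must be matched in pairs; I would fix a uniform indexing by these partitions, track the Koszul signs introduced by the shift $[-1]$ and by moving $\tau$ and $\Delta_H$ across tensor factors, and then observe that the identity decomposes into the $C_\infty$ relations applied to the retained factors, the associativity and coassociativity axioms of $H_*$, and the primitivity of $Im(\tau)$ applied to the absorbed factors. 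Once the indexing is arranged so that these three ingredients act on disjoint groups of factors, the cancellations are forced and $\partial_\tau^2 = 0$ follows.
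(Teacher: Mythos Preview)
Your cobar-construction framework is correct and equivalent to what the paper does diagrammatically; the split $\partial_\tau = \partial_{\mathrm{un}} + t$ with $\partial_{\mathrm{un}}^2 = 0$ coming from the untwisted tensor $A_\infty$ coalgebra is exactly the paper's starting point, and the ingredients you list---the $C_\infty$ relations, primitivity of $Im(\tau)$, the bialgebra axiom, and the Maurer--Cartan equation---are precisely the ones the paper uses.

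The one real issue is your proposed organization of the cancellation by $\tau$-degree. Your claim that the terms linear in $\tau$ ``reproduce the Maurer--Cartan equation'' while the higher-$\tau$ terms are handled by primitivity and the bialgebra axiom does not match the structure of the problem: the Maurer--Cartan equation here is $\partial\tau + \tau\partial + m_2^{Hom}(\tau,\tau) + m_3^{Hom}(\tau,\tau,\tau) + \cdots = 0$, which involves \emph{all} powers of $\tau$, and the twisted part $t$ already carries contributions in every $\tau$-degree (one from each $c_n$). So no $\tau$-degree stratum vanishes by Maurer--Cartan alone, and each one requires all three tools simultaneously. The paper's organization is instead by the combinatorial shape on the $C_*$-side: primitivity (left multiplication by a primitive is a coderivation of $\Delta_H$) is used \emph{first}, to render comparable the terms in which a given $\tau$-image lands in different output slots of $\Delta_H^{k-1}$; then the $C_\infty$ relations $\sum (1^{\otimes a}\otimes c_j\otimes 1^{\otimes b})\circ c_i = 0$ are applied to the resulting compositions of the $c_n$; what remains, for each fixed output shape, is a sum whose $H_*$-part is exactly one instance of the full Maurer--Cartan expression and hence vanishes. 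If you reorganize your argument along these lines it goes through, but the $\tau$-degree filtration as you describe it will not produce the claimed separation of mechanisms.
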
    

We then define the conjugation action of $H_*$ on itself.  The action of a primitive element on $H_*$ is both a derivation and a coderivation.  If we go through the process of defining $\{c_n^\tau\}$ as above, except instead of using the multiplication in $H_*$, we use the conjugation action, the resulting maps also define an $A_\infty$ coalgebra structure.  Because the conjugation action involves the antipode map, we require $H_*$ to be a dg Hopf algebra, as opposed to a dg bialgebra found in the Theorem \ref{thm:twistedcoalgebra}.  

\setcounter{thm}{16}
\begin{thm}
Let $C_*$ be a $C_\infty$ coalgebra, $H_*$ a dg Hopf algebra, and $\tau:C_* \rightarrow H_*$ a twisting cochain such that $Im(\tau) \subset Prim(H_*)$.  The maps $\{\partial_\tau, c_2^\tau, c_3^\tau, \cdots\}$, obtained using the conjugation action,  define an $A_\infty$ coalgebra on $C_*  \otimes H_* $.
\end{thm}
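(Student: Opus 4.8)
The plan is to run the proof of Theorem~\ref{thm:twistedcoalgebra} essentially verbatim, replacing the left multiplication action of $H_*$ on itself by the conjugation (adjoint) action, and to check that every step depends only on those features of the action that the two actions share. Recall that for a Hopf algebra the left conjugation action is $ad_h(x) = \sum \pm\, h_{(1)}\, x\, S(h_{(2)})$, where $\Delta h = \sum h_{(1)} \otimes h_{(2)}$ and $S$ is the antipode; this is precisely where the Hopf (rather than merely bialgebra) hypothesis enters. Inspecting the argument for Theorem~\ref{thm:twistedcoalgebra}, one sees that the multiplication enters only through three structural properties of the action $A_p := L_p$, left multiplication by an element $p = \tau(c') \in Prim(H_*)$: (1) it is an algebra action, so that a product of twisting cochain values acts by the composite, $A_{pq} = A_p A_q$; (2) it is a coderivation of $(H_*, \Delta)$, namely $\Delta \circ A_p = (A_p \otimes 1 + 1 \otimes A_p)\circ \Delta$; and (3) it is a chain map up to the action of the boundary, $[d, A_p] = A_{dp}$, the bracket being the graded commutator of operators. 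The combinatorics coming from the $C_\infty$ cooperations $c_n$ on $C_*$, the coproduct of $H_*$, and the twisting cochain equation for $\tau$ never see the action itself; they see only that it satisfies (1)--(3).

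I would therefore first verify (1)--(3) for the conjugation action restricted to primitives. Since $S(p) = -p$ for $p \in Prim(H_*)$, the adjoint action reduces to the graded commutator $ad_p(x) = [p,x]$. Property (1) is the statement that conjugation is an algebra action, $ad_{pq} = ad_p\, ad_q$, valid in any Hopf algebra. Property (3) follows from the graded Leibniz rule for $d$, giving $[d, ad_p] = ad_{dp}$ exactly as for $L_p$. Property (2), that $ad_p$ is a coderivation, is the one genuinely new input; it is the assertion quoted before the statement, and it follows cleanly by writing $ad_p = L_p - R_p$ (up to sign) and observing that left and right multiplication by a primitive are each coderivations, as is immediate from $\Delta p = p \otimes 1 + 1 \otimes p$. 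The crucial point is that the coderivation identity in (2) has the \emph{same form} for $L_p$ and for $ad_p$; consequently, expanding $\Delta$ across a composite $A_{p_1}\cdots A_{p_k}$ (which occurs because products of $\tau$-values are not primitive) yields the identical subset sum $\sum_{S}\bigl(\prod_{i\in S}A_{p_i}\otimes\prod_{j\notin S}A_{p_j}\bigr)\circ\Delta$, with the same Koszul signs, whether $A = L$ or $A = ad$.

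With (1)--(3) in hand, the verification of the $A_\infty$ coalgebra relations for $\{\partial_\tau, c_2^\tau, c_3^\tau, \dots\}$ is formally identical to that in Theorem~\ref{thm:twistedcoalgebra}: each relation reduces, after distributing $\Delta$ via (2) and collecting nested twists via (1), to a cancellation among $C_\infty$ and twisting cochain terms that is independent of the chosen action, while (3) supplies the interaction with the internal differential. The main obstacle is thus not a new computation but the bookkeeping needed to confirm that the proof of Theorem~\ref{thm:twistedcoalgebra} is genuinely action-agnostic, that it nowhere invokes a property special to multiplication. One must be especially careful with any step that evaluates on the unit of $H_*$, since $L_p(1) = p$ whereas $ad_p(1) = [p,1] = 0$; every such step must be recast purely in terms of (1)--(3) so that it transfers. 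Once this is checked, replacing $L$ by $ad$ throughout produces the desired $A_\infty$ coalgebra structure.
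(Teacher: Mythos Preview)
Your proposal is correct and follows essentially the same strategy as the paper. The paper's proof is slightly more economical: it first observes that for $p \in Prim(H_*)$ one has $S(p) = -p$ and hence $conj_p(x) = px - xp = [p,x]$, so that the maps $\{\partial_\tau, c_2^\tau, c_3^\tau, \dots\}$ built from the conjugation action are \emph{literally equal} to those built from the bracket action; the result then follows immediately from the already-proved Theorem~\ref{thm:conjugationcoalgebrabracket}, whose proof in turn is exactly your observation that the argument for Theorem~\ref{thm:twistedcoalgebra} uses only the coderivation property of the action by a primitive. Your more explicit enumeration of properties (1)--(3) is a reasonable expansion of what the paper compresses into the phrase ``the only property of left multiplication used\ldots is that left multiplication by a primitive element is a coderivation,'' and your caveat about evaluation on the unit, while not actually needed here, is a sensible sanity check.
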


Since the conjugation action is a derivation, if $C_*$ also has a multiplication, it is reasonable to ask for an $A_\infty$ algebra on $C_* \otimes H_*$.  When $C_*$ is a cyclic $C_\infty$ coalgebra, there is a twisted $A_\infty$ algebra on $C_* \otimes H_*$. 
\setcounter{thm}{17}
\begin{thm} 
Let $C_*$ be a cyclic $C_\infty$ coalgebra,  $H_*$ be a Hopf algebra, and $\tau:C_* \rightarrow H_*$ be a twisting cochain with $Im(\tau) \subset Prim (H_*)$.   The maps $\{\partial_\tau, m_2, m_3, \cdots \},$ defined using the conjugation action in $H_*$, give $C_* \otimes_\tau H_*$ the structure of an $A_\infty$ algebra.
\end{thm}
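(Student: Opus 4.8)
The plan is to dualize the $A_\infty$ coalgebra of Theorem 3.17 using the cyclic pairing, rather than to verify the algebra relations from scratch. The cyclic structure on the $C_\infty$ coalgebra $C_*$ supplies a non-degenerate, graded symmetric, invariant pairing $\langle -,-\rangle$ on $C_*$. First I would use this pairing to produce from the cooperations $\{c_n\}$ a dual collection of multiplications $\{\mu_n:C_*^{\otimes n}\to C_*\}$, characterized by $\langle \mu_n(x_1\otimes\cdots\otimes x_n),\,x_{n+1}\rangle = \pm\,\langle x_1\otimes\cdots\otimes x_n,\; c_n(x_{n+1})\rangle$, where the right-hand side uses the pairing induced by $\langle -,-\rangle$ on $C_*^{\otimes n}$. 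The cyclic invariance of $\langle -,-\rangle$ is exactly the condition guaranteeing that the $\{\mu_n\}$ are well-defined and satisfy the $C_\infty$ (hence $A_\infty$) algebra relations; this is the standard cyclic duality between $C_\infty$ coalgebras and $C_\infty$ algebras.

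Next I would define the twisted multiplications $m_n$ on $C_*\otimes H_*$ by the same recipe used to build the $c_n^\tau$ in Theorem 3.17, but with the roles of product and coproduct on the $C_*$ factor interchanged: combine the $C_*$-components using $\mu_n$, apply $\tau$ to the factors selected by the construction, and act on the $H_*$-components by conjugation. As in the coalgebra case, $m_1=\partial_\tau$ is Brown's twisted differential, so the construction reduces to Brown's when the higher operations vanish. Because conjugation by a primitive element is both a derivation and a coderivation of $H_*$, the $H_*$-part of each $m_n$ is formally identical to the $H_*$-part of the corresponding $c_n^\tau$; only the $C_*$-part is dualized.

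To verify the $A_\infty$ algebra relations $\sum\pm\, m_i\circ(\mathrm{id}^{\otimes a}\otimes m_j\otimes\mathrm{id}^{\otimes b})=0$, I would show that each is the pairing-dual of one of the $A_\infty$ coalgebra relations already established in Theorem 3.17. The relations split, as before, into the purely $C_*$-internal terms — which hold by the $C_\infty$ algebra relations of the first step — and the mixed terms involving $\tau$. For the mixed terms, the derivation property of conjugation with respect to the multiplication of $H_*$ plays exactly the role that its coderivation property played in Theorem 3.17; pairing the $C_*$-legs against $\langle -,-\rangle$ and using cyclic invariance transports each coalgebra term to the matching algebra term, the $H_*$-legs being carried along unchanged by the derivation–coderivation symmetry of conjugation. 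Assembling these identifications yields all the $A_\infty$ algebra relations.

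The main obstacle will be the sign bookkeeping in the dualization: one must check that the Koszul signs produced by pulling the cyclic pairing through the twisting construction agree term-by-term with the signs in the coalgebra relations, and that the derivation property of conjugation is invoked at precisely the positions where the coderivation property appeared. The conceptual content is light — it is the self-duality of the ``twist by conjugation'' construction together with cyclic duality — but establishing the exact correspondence of signs and of the positions at which $\tau$ is inserted is the delicate part, and is where I would expect to spend most of the effort.
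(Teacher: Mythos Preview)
Your proposal misreads the theorem and misapplies the duality. The statement asserts that the \emph{untwisted} higher products $m_n$ (the tensor of the $C_\infty$ algebra on $C_*$ obtained from the cyclic pairing with the strict product on $H_*$) together with the twisted $\partial_\tau$ form an $A_\infty$ algebra. The paper stresses this point explicitly: ``unlike in the previous cases, we do not need to twist the higher maps.'' Your recipe instead builds twisted $m_n^\tau$ ``by the same recipe used to build the $c_n^\tau$,'' which is a different object and not what is claimed.

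More seriously, the dualization you propose does not make sense as stated. The cyclic pairing lives only on $C_*$, not on $C_*\otimes H_*$, so there is no pairing that transports the $A_\infty$ coalgebra relations of Theorem~3.17 to algebra relations on $C_*\otimes H_*$. Concretely, the $H_*$-part of $c_n^\tau$ uses the iterated \emph{coproduct} on $H_*$ to produce $n$ tensor factors, whereas $m_n$ uses the iterated \emph{product} on $H_*$ to merge $n$ factors; nothing in the $C_*$-pairing exchanges these. Moreover, the twisted pieces of $c_n^\tau$ are built from $c_k$ with $k>n$, supplying extra $C_*$-outputs to feed into $\tau$; on the algebra side there are no extra $C_*$-inputs available, so your phrase ``apply $\tau$ to the factors selected by the construction'' has no content for $m_n$ with $n\ge 2$.

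The paper's argument is different and short. First, conjugation by a primitive equals the bracket action $[a,-]$, which is a \emph{derivation} of the product on $H_*$; this reduces the statement to Theorem~3.13. There the crucial step is an open-Frobenius--type identity on $C_*$: writing $c_n$ and $m_l$ as cyclically invariant tensors via the pairing, one has equalities of the form
\[
c_n\circ m_l \;=\; (1^{\otimes i}\otimes m_l\otimes 1^{\otimes j})\circ (c_n\otimes 1^{\otimes l-1}),
\]
and these, combined with the derivation property of $[a,-]$ on $H_*$, show that the twisted pieces of $\partial_\tau\circ m_l$ match exactly the twisted pieces of $\sum m_l\circ(1^{\otimes a}\otimes\partial_\tau\otimes 1^{\otimes b})$. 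Since the untwisted $\{\partial,m_2,m_3,\dots\}$ already satisfy the $A_\infty$ relations, this finishes the proof. The cyclic pairing is used, but only to establish that $C_*$-identity, not to dualize the whole coalgebra structure.
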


The $A_\infty$ algebra and $A_\infty$ coalgebra share the same differential $\partial_\tau$, so they compute the same linear homology.  We still do not know what the further compatibilities are.

Section \ref{section:stringtopologyoperations} applies this work to the path space fibration $\Omega_b(M) \rightarrow P_b(M) \rightarrow M$.  Since $\Omega_b(M)$ is homotopy equivalent to a topological group, we consider $P_b(M) \rightarrow M$ to be a principal bundle.  The first step is to construct a twisting cochain $H_*(M) \rightarrow T(H_*(M)[-1])$, whose image is in $\mathcal{L}(H_*(M)[-1])$.    We obtain such a map by considering the construction of a power series connection.  Then we apply Theorem \ref{thm:twistedcoalgebra} to get an $A_\infty$ coalgebra model of the based path space. 

We also get a description of string topology operations from the path space fibration.  Any group acts on itself by conjugation. The conjugate bundle is defined to be the associated bundle of a principal $G$ bundle with respect to the conjugation action.  The conjugate bundle obtained from the path space fibration is a model of the free loop space.   Applying Theorem \ref{thm:conjugationcoalgebra} gives an $A_\infty$ coalgebra structure modeling the coalgebra on $H_*(LM)$ induced by the diagonal map.  Applying Theorem \ref{thm:loopproduct} gives an $A_\infty$ algebra structure modeling the algebra on $H_*(LM)$ given by the loop product. 

The final section applies the work in Section \ref{section:algebraicsetting} to the case of a principal $G$ bundle $G \rightarrow P \rightarrow M$, where $G$ is a connected Lie group.  The $A_\infty$ coalgebra on $H_*(M) \otimes H_*(G)$ given by applying Theorem \ref{thm:twistedcoalgebra} can be expressed in terms of the characteristic classes of the bundle.  We can also consider the conjugate bundle, denoted $Conj(P) \rightarrow M.$  Then Theorem \ref{thm:conjugationcoalgebra} gives an $A_\infty$ coalgebra model for $H_*(Conj(P))$ and Theorem \ref{thm:loopproduct}  gives an  $A_\infty$ algebra model.  

Given a connection on $P \rightarrow M$, there is a map of bundles $P_b(M) \rightarrow P $, which induces a map on associated bundles with respect to the conjugation action $Conj(P_b(M)) \rightarrow Conj(P)$.  Then the algebraic structures we get modeling the total space $Conj(P)$ are representations of algebraic structures on $Conj(P_b(M))$.  In this way, we get representations of string topology.

\thanks{\textbf{Acknowledgements:}}{ This paper would not have been possible without the help and direction of Mahmoud Zeinalian.  The author has also benefited from many helpful conversations with Joseph Hirsh, Jim Stasheff, David Stone, Dennis Sullivan, and Thomas Tradler.  The referee provided many necessary and constructive comments, which helped improve the paper. }

\setcounter{section}{1}

\section{Background Material}
Algebras and coalgebras are taken over $\Q$.  Homology and cohomology are taken with coefficients in $\Q$.  

\subsection{Twisting Cochains}

We first describe Brown's theory of twisting cochains in a purely algebraic setting.  Let $C_*$ be a differential graded coalgebra and $A_*$ a differential graded algebra.  Then $(Hom(C_*,A_*), \partial_C \otimes 1 + 1 \otimes \partial_A)$ is a differential graded algebra, and a twisting cochain is an element $\tau \in Hom(C_*,A_*)$ satisfying the Maurer Cartan equation $$\partial_A \circ \tau + \tau \circ \partial_C + \tau\cdot \tau=0. $$  The Maurer Cartan equation makes sense for any differential graded algebra, and a twisting cochain is a Maurer Cartan element in a differential graded algebra of the form $Hom(C_*,A_*)$.  The tensor differential $\partial_C \otimes1 + 1 \otimes \partial_A$ on $C_* \otimes A_*$ is twisted by adding a term
\begin{eqnarray*}
C_* \otimes A_* \overset{\Delta \otimes 1} {\rightarrow} C_* \otimes C_* \otimes A_* \overset{1 \otimes \tau \otimes 1}{\rightarrow} C_* \otimes A_* \otimes A_* \overset{1 \otimes m} {\rightarrow} C_* \otimes A_*.
\end{eqnarray*}
We refer to this term as the twisted term, and $\partial_\tau$ is the sum of the tensor differential and
twisted term.  The coproduct on $C_*$ defines a comodule on the tensor  $C_* \otimes A_* \rightarrow C_* \otimes C_* \otimes A_*$.  The coalgebra $C_*$ is a comodule over itself.

\begin{thm}\cite{B}
Let $C_*$ be a coalgebra, $A_*$ an algebra, and $\tau$ a twisting cochain.  Then $(C_* \otimes A_*, \partial_\tau)$ is a chain complex.  If $C_1=0$ and $\epsilon: A_* \rightarrow k$ is an augmentation, then $Id \otimes \epsilon: C_* \otimes A_* \rightarrow C_*$ is a map of comodules.
\end{thm}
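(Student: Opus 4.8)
The statement has two parts: that $(C_* \otimes A_*, \partial_\tau)$ is a chain complex, i.e. $\partial_\tau^2 = 0$, and that when $C_1 = 0$ the projection $Id \otimes \epsilon$ respects the comodule structures. The plan is to treat these separately, the first being the substantive computation and the second a short degree argument.

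For the chain complex claim I would write $\partial_\tau = d + t_\tau$, where $d = \partial_C \otimes 1 + 1 \otimes \partial_A$ is the tensor differential and $t_\tau = (1 \otimes m)(1 \otimes \tau \otimes 1)(\Delta \otimes 1)$ is the twisted term. Since $\partial_C$ and $\partial_A$ are differentials, $d^2 = 0$, so it suffices to show $d t_\tau + t_\tau d + t_\tau^2 = 0$. First I would compute the graded commutator $d t_\tau + t_\tau d$. Because $\Delta$ and $m$ are chain maps, the terms in which $d$ acts through to the outer $C_*$ factor or to the right-hand $A_*$ factor cancel in pairs (here one uses $\Delta \partial_C = (\partial_C \otimes 1 + 1 \otimes \partial_C)\Delta$ together with the Leibniz rules for $\partial_A$ and $m$), and what survives is the operator applying $\partial_A \tau + \tau \partial_C$ to the factor carrying $\tau$. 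Next I would compute $t_\tau^2$: applying the twisted term twice introduces two copies of $\tau$ and two multiplications, and coassociativity of $\Delta$ together with associativity of $m$ collapse this to the operator applying the convolution product $\tau \cdot \tau = m(\tau \otimes \tau)\Delta$ to that same middle factor. Finally the Maurer--Cartan equation $\partial_A \tau + \tau \partial_C + \tau \cdot \tau = 0$ forces the two surviving contributions to cancel, giving $\partial_\tau^2 = 0$.

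The main obstacle here is purely the sign bookkeeping: each time $d$ or $t_\tau$ moves past a factor a Koszul sign appears, and the pairwise cancellations above only succeed once these are tracked consistently. I would fix a Sweedler-type notation $\Delta c = \sum c_{(1)} \otimes c_{(2)}$, record that $\tau$ has degree $-1$, and verify the signs term by term; no conceptual difficulty remains once the convention is pinned down.

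For the comodule claim, recall that $C_* \otimes A_*$ is a $C_*$-comodule via $\Delta \otimes 1$ and that $C_*$ is a comodule over itself via $\Delta$. Checking the comodule square for $Id \otimes \epsilon$ is immediate: both composites send $c \otimes a$ to $\Delta(c)\,\epsilon(a)$, so no extra hypothesis is needed there. The content is that $Id \otimes \epsilon$ is a chain map, i.e. $(Id \otimes \epsilon)\partial_\tau = \partial_C (Id \otimes \epsilon)$, and this is where $C_1 = 0$ enters. Applying $Id \otimes \epsilon$ to $\partial_\tau(c \otimes a)$, the $1 \otimes \partial_A$ term dies because $\epsilon$ is an augmentation ($\epsilon \partial_A = 0$), while the twisted term becomes $\sum \pm\, c_{(1)}\, \epsilon(\tau(c_{(2)}))\, \epsilon(a)$, using that $\epsilon$ is an algebra map. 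Since $\tau$ has degree $-1$, the scalar $\epsilon(\tau(c_{(2)}))$ can be nonzero only when $c_{(2)}$ has degree $1$, and the hypothesis $C_1 = 0$ forces this to vanish. Hence only the $\partial_C \otimes 1$ term survives, matching $\partial_C (Id \otimes \epsilon)$, and the proof is complete.
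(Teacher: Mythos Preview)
Your proposal is correct and matches the paper's argument: the paper defers $\partial_\tau^2=0$ to Brown and to a later diagrammatic remark, but that remark is exactly your decomposition $\partial_\tau=d+t_\tau$ followed by the coderivation/derivation and coassociativity/associativity cancellations reducing to the Maurer--Cartan equation. For the comodule claim the paper gives the same degree argument, phrased as a case split on whether the $A_*$ factor has positive degree rather than factoring through $\epsilon(\tau(c_{(2)}))\epsilon(a)$ via multiplicativity of $\epsilon$, but both arrive at the observation that $\epsilon(\tau(c_{(2)}))$ forces $|c_{(2)}|=1$, which $C_1=0$ rules out.
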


\begin{proof}
In [\cite{B}, p. $229$],  $\partial_\tau$ is shown to square to zero.  We give a diagrammatic proof of that $\partial^2=0$ in Remark \ref{remark:pictureproof}.

The map $1 \otimes \epsilon$ obviously commutes with the comodule map, since the comodule map on $C_* \otimes A_*$ is given by the coproduct on $C_*$ and the coproduct on $C_*$ is the comodule structure for $C_*$.  To show that $1 \otimes \epsilon$ commutes with the differential, it suffices to show that $1 \otimes \epsilon$ vanishes on the twisted term.  To see this, note that $\epsilon$ is zero on any element of positive degree in $A_*$.  Let $c \otimes h \in C_* \otimes A_*$.  If $h$ is in positive degree, then the twisted term will have positive degree in the $A_*$ factor and will map to zero under $1 \otimes \epsilon$.  Consider $C_* \otimes1$ in $C_* \otimes A_*$ and $\Delta(c)= \sum c_{(1i)} \otimes c_{(2i)}$.   Since $\tau$ is a degree $-1$ map, $\tau(c_{(2i)})$ will have positive dimension for $|c_{(2i)}|>1$ and be zero for $|c_{(2i)}|=0$.  Since $C_1=0$, $1 \otimes \epsilon$ will vanish on the twisted term.

\end{proof}

We write $C_* \otimes_\tau A_*$ for the twisted complex $(C_* \otimes A_*, \partial_\tau)$.

This theory can be applied to principal bundles $G \rightarrow P \rightarrow M$.  The chain complex $C_*(M)$ is a differential graded coalgebra, where the coproduct is induced by the diagonal map $M \rightarrow M \times M$.  The group multiplication of $G$ provides an algebra structure on $C_*(G)$.  A twisting cochain is then a map $\tau:C_*(M) \rightarrow C_*(G)$ satisfying the Maurer Cartan equation.

The complex $(C_*(M) \otimes C_*(G), \partial_M \otimes Id + Id \otimes \partial_G)$ will not, in general, compute the homology of $P$.  However, when we twist the differential by a suitable twisting cochain $\tau:C_*(M) \rightarrow C_*(G)$, the complex $(C_*(M) \otimes C_*(G), \partial_\tau)$ will compute $H_*(P)$.

\begin{thm}[\cite{B}, Theorem $(4.2)$]
The chain complex $(C_*(M) \otimes C_*(G), \partial_\tau)$ is chain equivalent to $C_*(P)$.
\end{thm}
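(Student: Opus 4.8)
The plan is to compare the two complexes through their natural filtrations by the base and to conclude with a spectral sequence comparison argument. Because we work over $\Q$, it suffices to exhibit a filtration-preserving chain map that is a homology isomorphism: a quasi-isomorphism of complexes of $\Q$-vector spaces is automatically a chain homotopy equivalence, which is exactly the conclusion we want. So the whole proof reduces to (i) setting up matching filtrations, (ii) computing the first pages, and (iii) building and comparing a map.

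First I would filter the geometric side. Fixing a CW (or simplicial) structure on $M$, set $F_p C_*(P) = C_*(p^{-1}(M^{(p)}))$, the chains supported over the $p$-skeleton. Since each cell of $M$ is contractible, the principal bundle restricts trivially over it, so $p^{-1}(e_p) \simeq e_p \times G$; the associated graded of this filtration is therefore the $E^1$ page of the Serre spectral sequence of $G \to P \to M$, namely $E^1_{p,q} \cong C_p(M) \otimes H_q(G)$ (with a local coefficient system in $H_*(G)$ in general). On the algebraic side I would filter $C_* (M) \otimes_\tau C_*(G)$ by homological degree in the $C_*(M)$ factor. The twisted term $(1 \otimes m)(1 \otimes \tau \otimes 1)(\Delta \otimes 1)$ strictly lowers this degree, since it applies the coproduct and then the degree $-1$ map $\tau$, and the term $\partial_C \otimes 1$ also strictly lowers it; hence the associated graded differential is just $1 \otimes \partial_G$, giving $E^1_{p,q} \cong C_p(M) \otimes H_q(G)$ on this side as well.

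Next I would produce a filtration-preserving chain map relating the two complexes, and this is the step that carries all the geometric content and where I expect the main difficulty. The natural route is acyclic models: both $C_*(-)$ applied to the total space and the twisted tensor product construction are functorial and acyclic on the chosen models (cells or simplices of the base, together with the local triviality of $P$), so one obtains a natural filtration-preserving transformation $\phi : C_*(M) \otimes_\tau C_*(G) \to C_*(P)$, with the ambiguity in $\phi$ governed precisely by the twisting cochain data. The delicate point is to verify that $\phi$ induces an isomorphism on $E^1$, which amounts to checking that the identifications of $C_*(M)\otimes H_*(G)$ on the two sides are compatible with the $d^1$-differentials. On $C_*(P)$ the $d^1$ encodes the attaching maps of $M$ together with the holonomy/clutching data of the bundle, while on the twisted side it is assembled from $\partial_M$ and the leading part of $\tau$; reconciling the two, including matching the local coefficient systems, is exactly what the Maurer-Cartan condition on $\tau$ for this bundle guarantees.

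Finally, each filtration is bounded in every total degree, so both spectral sequences converge, and by the standard comparison theorem a map of spectral sequences that is an isomorphism on $E^1$ is an isomorphism on all later pages and on the abutments. Thus $\phi$ is a quasi-isomorphism between $C_*(M) \otimes_\tau C_*(G)$ and $C_*(P)$, and over $\Q$ a chain equivalence, which proves the statement. As indicated, the genuine obstacle is the construction of $\phi$ and the verification that it respects the filtration and realizes the correct $d^1$; once that is in place, the convergence and comparison steps are formal.
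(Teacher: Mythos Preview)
The paper does not supply a proof of this statement; it is quoted directly from Brown \cite{B} as background, and your outline is essentially Brown's own argument---construct a filtration-preserving comparison map by acyclic models over the base and conclude by the spectral sequence comparison theorem---so there is nothing to contrast on the level of strategy.

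One point worth sharpening: the theorem concerns a \emph{particular} twisting cochain adapted to the bundle $P$ (built, in Brown's paper, from a choice of local trivializations, or equivalently from a classifying map), not an arbitrary Maurer--Cartan element of $Hom(C_*(M),C_*(G))$. You gesture at this when you say the $d^1$-match is ``what the Maurer--Cartan condition on $\tau$ for this bundle guarantees,'' but it is the specific construction of $\tau$ from the bundle data, not merely the equation $\partial\tau+\tau\cdot\tau=0$, that forces the leading part of $\tau$ to encode the holonomy/attaching information and hence makes the two $E^1$ differentials agree. You have correctly flagged the construction of $\phi$ and this $d^1$-verification as the substantive step; the filtration bookkeeping, the $E^1$ computations on both sides, convergence, and the passage from quasi-isomorphism to chain equivalence over $\Q$ are all fine as stated.
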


The equivalence of the above theorem is of chain complexes and not of dg coalgebras, despite the fact that both complexes have coproducts.  A further assumption is needed on $\tau$ to obtain an equivalence of dg coalgebras.

We return to the general setting.  Let $C_*$ be a dg coalgebra and $H_*$ a dg bialgebra.  The primitive elements $Prim(H_*)= \{h \in H_* | \Delta(h)= h \otimes 1 + 1 \otimes h \}$ is a Lie algebra whose universal enveloping algebra is $H_*$.  The following lemma is a reformulation of Quillen (\cite{Q}, Appendix B).

\begin{lem}
Let $\tau:C_* \rightarrow H_*$ be a twisting cochain from a cocommutative coalgebra to a dg bialgebra.  Then $(C_* \otimes H_*, \partial_\tau)$ is a differential graded coalgebra if and only if $Im(\tau) \subset Prim(H_*).$
\end{lem}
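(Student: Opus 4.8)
The plan is to isolate exactly which coalgebra axiom the hypothesis $Im(\tau)\subset Prim(H_*)$ controls. Equip $C_*\otimes H_*$ with the usual tensor-product coproduct $\Delta_{C\otimes H}=(1\otimes\sigma\otimes 1)(\Delta_C\otimes\Delta_H)$, where $\sigma\colon H_*\otimes C_*\to C_*\otimes H_*$ is the graded symmetry; this coproduct is coassociative and counital regardless of $\tau$, and $\partial_\tau^2=0$ is already supplied by the chain-complex theorem of Brown above. Hence $(C_*\otimes H_*,\partial_\tau)$ is a dg coalgebra if and only if $\partial_\tau$ is a coderivation of $\Delta_{C\otimes H}$, i.e.
\[
\Delta_{C\otimes H}\circ\partial_\tau=(\partial_\tau\otimes 1+1\otimes\partial_\tau)\circ\Delta_{C\otimes H}.
\]
Writing $\partial_\tau=(\partial_C\otimes 1+1\otimes\partial_H)+t$, where $t=(1\otimes m)(1\otimes\tau\otimes 1)(\Delta_C\otimes 1)$ is the twisted term, the tensor differential is a coderivation of $\Delta_{C\otimes H}$ for free, since $\partial_C$ and $\partial_H$ are coderivations of $C_*$ and $H_*$. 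So the whole statement reduces to the claim that \emph{$t$ is a coderivation of $\Delta_{C\otimes H}$ if and only if $Im(\tau)\subset Prim(H_*)$}.

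Next I would expand both sides of the coderivation identity for $t$ in Sweedler notation, using coassociativity of $C_*$ to pass to the iterated coproduct $\sum c_{(1)}\otimes c_{(2)}\otimes c_{(3)}$, and using that $\Delta_H$ is an algebra map to write $\Delta_H(\tau(c_{(i)})\,h)=\Delta_H(\tau(c_{(i)}))\,\Delta_H(h)$. Suppressing Koszul signs, the left-hand side becomes
\[
\Delta_{C\otimes H}\bigl(t(c\otimes h)\bigr)=\sum (c_{(1)}\otimes \tau(c_{(3)})_{(1)}h_{(1)})\otimes(c_{(2)}\otimes \tau(c_{(3)})_{(2)}h_{(2)}),
\]
in which $\Delta_H$ has been applied to $\tau(c_{(3)})$, while the right-hand side is the sum of a $t\otimes 1$ term and a $1\otimes t$ term,
\[
\sum (c_{(1)}\otimes \tau(c_{(2)})h_{(1)})\otimes(c_{(3)}\otimes h_{(2)})\;+\;\sum (c_{(1)}\otimes h_{(1)})\otimes(c_{(2)}\otimes \tau(c_{(3)})h_{(2)}),
\]
in which $\tau$ is never split.

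For the \emph{if} direction I would substitute $\Delta_H(\tau(x))=\tau(x)\otimes 1+1\otimes\tau(x)$ into the left-hand side, breaking it into two pieces. One piece equals the $1\otimes t$ term verbatim; the other, $\sum (c_{(1)}\otimes \tau(c_{(3)})h_{(1)})\otimes(c_{(2)}\otimes h_{(2)})$, differs from the $t\otimes 1$ term only in whether $c_{(2)}$ or $c_{(3)}$ is the passive factor carrying $\tau$, and the two are identified by transposing the second and third factors of the iterated coproduct --- which is legitimate precisely because $C_*$ is cocommutative. This is where cocommutativity enters essentially, and it completes the identity. For the \emph{only if} direction I would assume the identity, set $h=1$, and apply $\epsilon_C\otimes 1_H\otimes\epsilon_C\otimes 1_H$ to both sides; using the counit relations $\sum\epsilon(c_{(1)})\epsilon(c_{(2)})c_{(3)}=c$ and its cyclic variants, the left-hand side collapses to $\Delta_H(\tau(c))$ and the right-hand side to $\tau(c)\otimes 1+1\otimes\tau(c)$, giving $\tau(c)\in Prim(H_*)$ for every $c$.

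The main obstacle is sign bookkeeping rather than anything conceptual: one must track the Koszul signs coming from the degree $-1$ map $\tau$, the symmetry $\sigma$, and the tensor coproduct carefully enough that the cocommutativity transposition in the \emph{if} direction matches signs as well as terms. A cleaner route, in keeping with the diagrammatic style of Remark \ref{remark:pictureproof}, is to verify the identity as a string-diagram manipulation with nodes for $m$, $\Delta_C$, $\Delta_H$ and $\tau$: primitivity is then exactly the local move allowing a $\tau$-strand to cross a comultiplication node as a sum of two strands, and cocommutativity is the move letting the two $C$-strands cross freely. I expect the sign discipline to be the only delicate point.
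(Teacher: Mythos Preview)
Your proposal is correct and follows essentially the same route as the paper: reduce to showing the twisted term is a coderivation, use that left multiplication by a primitive is a coderivation of $\Delta_H$, and invoke cocommutativity of $C_*$ to match the remaining terms---this is exactly the content of the paper's reference to Quillen and the diagrammatic argument in Remark~\ref{remark:pictureproof} (Figure~\ref{figure:strictcoderivation}). Your Sweedler-notation expansion is simply a faithful unpacking of that diagram, and your counit trick for the converse direction, which the paper only obtains by citation, is a clean addition.
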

\begin{proof}

To show that $\partial_\tau$ is a coderivation we need to show that $$(\Delta_{C \otimes H}) \partial_\tau =(\partial_\tau \otimes 1 + 1 \otimes \partial_\tau)\Delta_{C \otimes H}. $$  The key is that multiplication by a primitive element is a coderivation.  We give a diagrammatic proof in Remark \ref{remark:pictureproof}.  The reader can find the computation in (\cite{Q}, p. $289$).

\end{proof}

\section{Algebraic Setting for Twisted Tensor Products}\label{section:algebraicsetting}

In this section, we extend the discussion of Brown's theory of twisting cochains to the homotopy algebra setting.  Let $(C_*, \{c_n\} )$ be a $C_\infty$ coalgebra and $H_*$ a strict dg bialgebra.  Given a twisting cochain $\tau:C_* \rightarrow H_*$, we define a twisted $A_\infty$ coalgebra structure on $C_* \otimes H_*$.

There are three properties that are used in Brown's setting.  For $C_*$ a strict dg coalgebra and $A_*$ a strict dg algebra, the following properties are used.

\begin{enumerate}
\item{$Hom(C_*,A_*)$ is a differential graded algebra. }
\item{twisting cochains $\tau:C_* \rightarrow A_*$ are in one to one correspondence with chain maps $\mathcal{F}(C_*) \rightarrow A_*$, where $\mathcal{F}$ is the cobar functor.}
\item{a twisting cochain $\tau \in Hom(C_*,A_*)$ defines a twisted differential on $C_* \otimes A_*$.}
\end{enumerate}
We address the analogs of these properties in the  following subsections.

\subsection{Maurer Cartan Equation in the Homotopy Algebra Setting}

We review some definitions.  An $A_\infty$ algebra consists of a vector space $V$ and maps $\{m_n:V[-1]^{\otimes n} \rightarrow V[-1]\}$ satisfying 
$$\sum_{k=1}^n \sum_{j=0}^{n-1} m_{n-k+1} \circ (Id^{\otimes j}\otimes  m_k \otimes Id^{n-j-k})=0. $$
The maps $\{m_n\}$ define a coderivation of square zero on $T(V[-1])$.  The shuffle product is a map $T(V[-1]) \otimes T(V[-1]) \rightarrow T(V[-1])$.  If $m_n$ vanishes on the image of the shuffle product for every $n$, then $(V, \{m_n\})$ is a $C_\infty$ algebra.  

An $A_\infty$ coalgebra and $C_\infty$ coalgebra are the dual notions of $A_\infty$ and $C_\infty$ algebras.  So $V$ is an $A_\infty$ coalgebra if there are maps $\{c_n:V[-1] \rightarrow V[-1]^{\otimes n}\}$ defining a derivation of square zero on $T(V[-1])$.  If the unshuffle product $T(V[-1]) \rightarrow T(V[-1]) \otimes T(V[-1])$ vanishes on the image of each $c_n$, then $(V, \{c_n\})$ is a $C_\infty$ coalgebra.

To deal with issues of convergence, we will make use of the completed tensor product.  For a vector space $V$, let $$\widehat T(V) = \prod_{i=0}^\infty V^{ \otimes i}. $$  In our applications, $V$ will be a finite dimensional vector space.  So $V$ has a unique topology making it a topological vector space.  There is an induced topology on $\widehat T(V)$, known the inverse limit topology.

 In order to say $\tau$ is a twisting cochain, the vector space $Hom(C_*, H_*)$ must have at least an $A_\infty$ algebra structure.  Moreover, we need the Lie algebra version of the Maurer Cartan equation, so we need an $L_\infty$ algebra on $Hom(C_*, Prim(H_*))$.

\begin{lem}\label{lem:Hom(C,A)}
Let $(C_*, \{c_n\})$ be a $C_\infty$ coalgebra and $A_*$ a differential graded algebra.  The vector space $Hom(C_*, A_*)$ is an $A_\infty$ algebra.
\end{lem}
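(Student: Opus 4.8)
The plan is to build the $A_\infty$ algebra on $Hom(C_*, A_*)$ by convolution, using the comultiplications $\{c_n\}$ of the $C_\infty$ coalgebra on one side and the strictly associative iterated multiplication of $A_*$ on the other. Concretely, for homogeneous $f_1, \dots, f_n \in Hom(C_*, A_*)$ I would set
$$ m_n(f_1, \dots, f_n) = \mu_A^{(n)} \circ (f_1 \otimes \cdots \otimes f_n) \circ c_n, $$
where $\mu_A^{(n)}: A_*^{\otimes n} \to A_*$ is the $n$-fold product (unambiguous since $A_*$ is associative) and the Koszul sign produced by moving the $f_i$ past the tensor factors output by $c_n$ is built in by regarding all maps as operating on the shifted spaces $C_*[-1]$ and $A_*[-1]$. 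For $n = 1$ this returns the usual differential $m_1(f) = \partial_A \circ f \pm f \circ \partial_C$ of the Hom complex, the $f \circ \partial_C$ term coming from $c_1 = \partial_C$ with $\mu_A^{(1)} = Id$ and the $\partial_A \circ f$ term from the differential of $A_*$. Because each $c_n$ raises word length, every individual $m_n$ is a well-defined finite operation; convergence only becomes an issue when one later sums over $n$, which is where the completed tensor product $\widehat T$ enters.

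The conceptual point behind the verification is that the $C_\infty$ (in particular $A_\infty$) coalgebra structure on $C_*$ is the same data as a square-zero, degree $-1$ derivation $d_C$ on $T(C_*[-1])$ whose corestriction has components the $c_n$, while an $A_\infty$ algebra structure on $Hom(C_*,A_*)$ is the same data as a square-zero, degree $-1$ coderivation on $T(Hom(C_*,A_*)[-1])$ whose components are the $m_n$. I would check the defining relation
$$ \sum_{k=1}^{n} \sum_{j=0}^{n-k} \pm\, m_{n-k+1}\bigl(f_1, \dots, f_j, m_k(f_{j+1}, \dots, f_{j+k}), f_{j+k+1}, \dots, f_n\bigr) = 0 $$
by substituting the definition of each $m$. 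Using associativity of $A_*$, every iterated product collapses into the single map $\mu_A^{(n)}$, and after factoring out $(f_1 \otimes \cdots \otimes f_n)$ the coefficient that remains is exactly $\sum \pm (Id^{\otimes j} \otimes c_k \otimes Id^{\otimes (n-k-j)}) \circ c_{n-k+1}$, i.e. the corresponding component of $d_C^2$. Since $d_C^2 = 0$ this vanishes, so the relation holds. In short, the relations for $\{m_n\}$ are precisely the image of the relations $d_C^2 = 0$ under the operation $\mu_A^{(n)} \circ (f_1 \otimes \cdots \otimes f_n) \circ (-)$.

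The main obstacle I expect is sign bookkeeping: the Koszul signs from permuting the $f_i$ past the comultiplication outputs, the suspension signs from $[-1]$, and the signs appearing in both the $A_\infty$ relation and the coalgebra relation must be shown to coincide. I would neutralize this by never writing the unshifted maps directly and instead defining each $m_n$ as a composite of honest degree $-1$ maps between tensor powers of $C_*[-1]$ and $A_*[-1]$, so that the sign matching is forced by functoriality of the suspension rather than checked term by term. The only input from $A_*$ that the argument uses is strict associativity (to collapse iterated products) together with the Leibniz rule for $\partial_A$ (absorbed into the $n=1$ term); no higher multiplications on $A_*$ are required, which is exactly why a strict dg algebra suffices. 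I also note that cocommutativity of $C_*$ plays no role here, so the lemma in fact holds for any $A_\infty$ coalgebra $C_*$; the $C_\infty$ refinement will instead be what later produces the $L_\infty$ structure on $Hom(C_*, Prim(H_*))$ needed to formulate the Maurer--Cartan equation.
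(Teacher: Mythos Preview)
Your proposal is correct and defines exactly the same convolution maps $m_n(f_1,\dots,f_n)=\mu_A^{(n)}\circ(f_1\otimes\cdots\otimes f_n)\circ c_n$ that the paper writes down. The paper in fact omits the verification entirely, remarking only that under the identification $Hom(C_*,A_*)\cong C^*\otimes A_*$ the statement reduces to the known fact that the tensor product of an $A_\infty$ algebra with a strictly associative algebra is an $A_\infty$ algebra; your direct argument---collapsing iterated products via associativity of $A_*$ and then reading off the component of $d_C^2=0$---is the actual content behind that remark and has the minor advantage of not invoking the dual-tensor identification (which tacitly uses finite-dimensionality of $C_*$). Your observation that only the $A_\infty$, not the $C_\infty$, coalgebra structure is used here is also correct and consistent with how the paper deploys the $C_\infty$ hypothesis only later, in Lemma~\ref{lem:Hom(C,L)}.
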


Since $Hom(C_*, A_*) \cong C^* \otimes A_*$, the lemma is just the statement the tensor product of an $A_\infty$ algebra with an associative algebra is an $A_\infty$ algebra.  We omit the proof, but define the maps $m_n$.  Let \begin{eqnarray*}
m_1^{Hom}(f)= \partial_A \circ f + f \circ \partial_C,
\end{eqnarray*}
where $\partial_C= c_1$ of the $C_\infty$ coalgebra structure.  For $n>1$, let 
\begin{eqnarray*}
m_n^{Hom}(f_1, \cdots, f_n) :C_* &\rightarrow& A_*\\
c&\mapsto &   m_A (f_1 \otimes \cdots \otimes f_n))c_n(c),
\end{eqnarray*}
where by $m_A$ we mean multiply all the terms using multiplication of $A_*$.  Since the multiplication in $A_*$ is associative, $m_n^{Hom }$ is well-defined.

The Maurer Cartan equation is then
 $$\partial \circ \tau + \tau \circ \partial + m_2^{Hom}(\tau,\tau) + m_3^{Hom} (\tau,\tau,\tau) +  m_4^{Hom}(\tau,\tau,\tau,\tau) + \cdots =0. $$
Since we have an infinite sum, a note on convergence is in order.  In our application, $A_*= \widehat T(H_*(M)[-1]).$  The twisting cochain we construct will have the property that $$Im (m_n(\tau, \cdots, \tau)) \subset(H_*(M)[-1])^{\otimes n}$$  So the infinite sum can be expressed as a finite sum in different tensors.  This is well defined in the completed tensor product. 

For the Lie version of the Maurer Cartan equation, we will need the following fact about $L_\infty$ algebras.

The Koszul sign convention says that when two elements $x$ and $y$ of degree $p$ and $q$ are commuted, a sign of $(-1)^{pq}$ is obtained.  For $x_1, \cdots, x_n$ and a permutation $\sigma \in S_n$, let $\epsilon(\sigma; x_1, \cdots x_n)$ be the sign so that $$x_1 \wedge \cdots \wedge x_n = \epsilon(\sigma; x_1, \cdots, x_n) x_{\sigma(1)} \wedge \cdots \wedge x_{\sigma(n)},$$ in the free graded commutative algebra $\bigwedge(x_1, \cdots, x_n)$.  Let  $\xi(\sigma) = sgn(\sigma) \cdot \epsilon(\sigma; x_1, \cdots, x_n)$. 

\begin{thm}[\cite{LM}, Theorem $3.1$ ]
Let $(V, \{m_n\})$ be an $A_\infty$ algebra.  Then there is an $L_\infty$ algebra on $V$ given by symmetrizing $m_n$.  That is, if
\begin{eqnarray*}
l_n (v_1, \cdots, v_n) = \sum_{\sigma \in S_n} \xi(\sigma) m_n(v_{\sigma(1)} \otimes \cdots \otimes v_{\sigma(n)})
\end{eqnarray*}
then $(V, \{l_n\})$ is an $L_\infty$ algebra.

\end{thm}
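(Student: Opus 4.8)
The plan is to use the standard dictionary between homotopy algebra structures and square-zero coderivations, together with the combinatorial factorization of the symmetric group into shuffles. Recall from the preceding definitions that an $A_\infty$ structure $\{m_n\}$ on $V$ is the same data as a degree-one coderivation $D=\sum_n D_{m_n}$ on the tensor coalgebra $T(V[-1])$ satisfying $D^2=0$, where $D_{m_n}$ is the unique coderivation extending $m_n$, and that the displayed quadratic relations are exactly the components of $D^2=0$. Dually, an $L_\infty$ structure $\{l_n\}$ is a degree-one coderivation $\delta=\sum_n \delta_{l_n}$ of square zero on the cofree graded-cocommutative coalgebra; since we work over $\Q$ (characteristic zero), this cocommutative coalgebra may be identified with the $\Sigma_n$-invariants inside $T(V[-1])$ via the Koszul-signed symmetrization idempotent. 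My target is therefore to show that the symmetrized operations $l_n$ defined in the statement assemble into such a square-zero coderivation.

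First I would verify that $l_n=\sum_{\sigma} \xi(\sigma)\,m_n\circ\sigma$ is exactly $m_n$ composed with the (un-normalized) symmetrization over $S_n$, so that each $l_n$ is graded symmetric in the sense required of an $L_\infty$ bracket; the factor $\mathrm{sgn}(\sigma)$ inside $\xi(\sigma)=\mathrm{sgn}(\sigma)\,\epsilon(\sigma;v_1,\dots,v_n)$ is precisely the sign produced by transporting antisymmetric brackets on $V$ through the suspension to symmetric operations on $V[-1]$. This step is bookkeeping but fixes all conventions. The $L_\infty$ relations to be proved then read, for each $n$,
\begin{equation*}
\sum_{i+j=n+1}\ \sum_{\sigma}\ \chi(\sigma)\ l_j\bigl(l_i(v_{\sigma(1)},\dots,v_{\sigma(i)}),\, v_{\sigma(i+1)},\dots,v_{\sigma(n)}\bigr)=0,
\end{equation*}
where $\sigma$ ranges over $(i,n-i)$-unshuffles and $\chi(\sigma)$ is the appropriate Koszul-and-sign weight.

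The heart of the argument is to substitute the definitions of $l_i$ and $l_j$ into this relation and reorganize. Substituting turns each summand into a weighted sum of composites $m_j\circ(m_i\otimes\mathrm{Id}^{\otimes(n-i)})$ precomposed with permutations of all $n$ inputs. Here I would invoke, for each fixed $i$, the factorization $S_n=\mathrm{Sh}(i,n-i)\cdot(S_i\times S_{n-i})$: summing a fixed block-operation over $(i,n-i)$-shuffles and then over independent permutations of the two blocks is the same as summing over all of $S_n$. Consequently, after collecting terms, the outer symmetrization ranges over every $\tau\in S_n$, and for each fixed $\tau$ the inner sum over the splitting $i+j=n+1$ and over the insertion position of the inner operation reproduces exactly the $A_\infty$ relation $\sum m_{n-k+1}\circ(\mathrm{Id}^{\otimes a}\otimes m_k\otimes\mathrm{Id}^{\otimes b})=0$ applied to $v_{\tau(1)},\dots,v_{\tau(n)}$. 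Since each such inner sum vanishes by hypothesis, the whole expression is zero, giving the $L_\infty$ relation. Equivalently and more conceptually, one checks that the symmetrization idempotent intertwines $D$ with $\delta$, so that $\delta^2$ is the symmetrization of $D^2=0$.

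I expect the single real obstacle to be the sign reconciliation: one must show that the Koszul signs $\epsilon(\sigma;\cdots)$, the permutation signs $\mathrm{sgn}(\sigma)$ in $\xi$, the suspension/desuspension signs relating $m_n$ on $V[-1]$ to unsuspended brackets on $V$, and the internal signs of the $A_\infty$ relation all combine so that the reindexing above is sign-exact, i.e. so that the coefficient of each $m_j\circ(m_i\otimes\mathrm{Id})\circ\tau$ matches on both sides. Handling this cleanly is exactly why the coderivation formulation is convenient: working with $D$ and $\delta$ on the (co)free coalgebras lets the suspension carry the signs automatically, reducing the claim to the identity that the symmetrization projector is a morphism of the relevant coalgebra structures and commutes with passage to coderivations. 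The shuffle combinatorics is standard and causes no difficulty once the signs are pinned down.
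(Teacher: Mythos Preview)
Your argument is correct and is essentially the standard proof due to Lada and Markl: expand the $L_\infty$ relation in terms of the $m_n$'s, use the factorization $S_n=\mathrm{Sh}(i,n-i)\cdot(S_i\times S_{n-i})$ to reorganize the double sum, and recognize for each fixed permutation an instance of the $A_\infty$ relation. The coderivation reformulation you invoke is exactly the clean way to manage the signs.

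There is nothing to compare against in this paper, however: the theorem is quoted verbatim from \cite{LM} and no proof is supplied here. Your sketch is in fact the argument given in \cite{LM}, so you have reconstructed the source rather than diverged from it.
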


We denote the $L_\infty$ algebra by $[V]$ to distinguish it from the $A_\infty$ algebra $V.$

\begin{lem}\label{lem:Hom(C,L)}
Let $(C_*, \{c_n\})$ be a $C_\infty$ coalgebra and $L_*$ be a differential graded Lie algebra.  Then $Hom(C_*, L_*)$ is an $L_\infty$ algebra.
\end{lem}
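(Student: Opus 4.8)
The plan is to reduce the statement to the associative case already settled in Lemma \ref{lem:Hom(C,A)} by passing through the universal enveloping algebra. Let $U(L_*)$ be the universal enveloping algebra of $L_*$; it is a differential graded associative algebra, and since we work over $\Q$ its standard Hopf algebra structure identifies $L_*$ with $Prim(U(L_*))$, a differential graded Lie subalgebra under the commutator bracket (Poincar\'e--Birkhoff--Witt). By Lemma \ref{lem:Hom(C,A)}, the vector space $Hom(C_*, U(L_*))$ is an $A_\infty$ algebra with operations $m_n^{Hom}$, and by the symmetrization theorem of \cite{LM} there is an associated $L_\infty$ algebra with brackets
$$l_n(f_1, \dots, f_n) = \sum_{\sigma \in S_n} \xi(\sigma)\, m_n^{Hom}(f_{\sigma(1)}, \dots, f_{\sigma(n)}).$$
I claim the subspace $Hom(C_*, L_*) \subseteq Hom(C_*, U(L_*))$ is closed under every $l_n$. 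Granting this, the $L_\infty$ relations hold on $Hom(C_*, L_*)$ because they are already identities of multilinear maps on $Hom(C_*, U(L_*))$, and so $Hom(C_*, L_*)$ inherits an $L_\infty$ structure.

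Everything thus reduces to closure. For $l_1 = m_1^{Hom}$ this is immediate, since $m_1^{Hom}(f) = \partial_U \circ f + f \circ \partial_C$ and the differential of $U(L_*)$ restricts to that of $L_*$. For $n \geq 2$ I would invoke the $C_\infty$ hypothesis on $C_*$. Set $W = C_*[-1]$ and let $\mathcal{L}^n(W) \subseteq W^{\otimes n}$ be the degree-$n$ part of the free Lie algebra on $W$, realized inside the tensor algebra. The $C_\infty$ condition says exactly that the unshuffle coproduct vanishes on the image of each $c_n$; by the Friedrichs criterion this is the statement that every value $c_n(c)$ is a Lie element,
$$c_n(c) \in \mathcal{L}^n(W) \subseteq W^{\otimes n}.$$

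With this in hand, closure becomes a statement about Lie elements and primitives. Writing $\omega = c_n(c)$ and $f_i \colon W \to L_* = Prim(U(L_*))$, we have
$$l_n(f_1, \dots, f_n)(c) = \sum_{\sigma \in S_n} \xi(\sigma)\, m_U\,(f_{\sigma(1)} \otimes \cdots \otimes f_{\sigma(n)})(\omega),$$
where $m_U$ multiplies the $n$ resulting factors in $U(L_*)$. To see this lies in $L_*$, I would apply $\Delta_U$, expand using the bialgebra axiom together with the primitivity $\Delta_U(f_i(w)) = f_i(w) \otimes 1 + 1 \otimes f_i(w)$ of each factor, and sort the terms by which factors go left and which go right. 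The extreme terms reassemble to $l_n(f_1, \dots, f_n)(c) \otimes 1 + 1 \otimes l_n(f_1, \dots, f_n)(c)$, while the mixed terms are controlled by the proper splittings of $\omega$ under the unshuffle coproduct; since $\omega$ is a Lie element, it is primitive for that coproduct, and the mixed terms cancel. The case $n = 2$ illustrates the mechanism: the cocommutativity encoded in the $C_\infty$ structure makes $c_2(c)$ antisymmetric in $W \otimes W$, so that together with the symmetrization the two orderings collapse into commutators $[f_1(u), f_2(v)]_{U(L_*)} = [f_1(u), f_2(v)]_{L_*}$, which are primitive.

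The main obstacle is precisely the bookkeeping in this last step: matching the double sum over permutations $\sigma$ (with the graded signs $\xi(\sigma)$) and over left/right splittings against the unshuffle coproduct of $\omega$, so that the mixed terms cancel for all $n$. This is exactly where the $C_\infty$ hypothesis, as opposed to a bare $A_\infty$ hypothesis on $C_*$, is essential: it is the Lie-element property $c_n(c) \in \mathcal{L}^n(W)$ that forces the cancellation. A cleaner route to the same conclusion is to note that a single $f \colon W \to L_*$ extends to an algebra homomorphism $T(W) \to U(L_*)$ intertwining the unshuffle coproduct with $\Delta_U$, hence a morphism of Hopf algebras carrying $Prim(T(W)) = \mathcal{L}(W)$ into $Prim(U(L_*)) = L_*$; this shows the diagonal operations $m_n^{Hom}(f, \dots, f)$ are $L_*$-valued, and the general $l_n$ then follow by polarization over $\Q$ (introducing scalar parameters of matching parity so that the signs $\xi(\sigma)$ emerge correctly).
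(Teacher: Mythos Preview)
Your argument is correct and follows essentially the same route as the paper: embed $Hom(C_*,L_*)$ into $Hom(C_*,U(L_*))$, symmetrize the $A_\infty$ structure to an $L_\infty$ one, and verify closure by applying $\Delta_U$ and using the $C_\infty$ hypothesis to kill the cross terms. Your invocation of the Friedrichs criterion and the alternative polarization argument are pleasant elaborations, but the core mechanism---primitivity of each $f_i(w)$ together with the vanishing of the unshuffle on the image of $c_n$---is exactly what the paper uses.
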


\begin{proof}

Our proof proceeds as follows.  Let $U(L_*)$ be the universal
enveloping algebra of $L_*$.  The previous lemma shows that the
space $Hom(C_*, U(L_*))$ is an $A_\infty$ algebra, with structure
maps $\{m_n\}$.  Symmetrizing each $m_n$ defines an $L_\infty$ algebra, with structure maps denoted $\{l_n\}$. Let $c_n(x) = x_{n,1} \otimes \cdots \otimes x_{n,n}.$
Then the $L_\infty$ algebra is given by
\begin{eqnarray*}
l_n(f_1 \cdots f_n) (x) = \sum_{\sigma \in S_n} \xi(\sigma) f_1(x_{n, \sigma(1)}) \cdots f_n(x_{n, \sigma(n)}).
\end{eqnarray*}
To prove the lemma, it suffices to show that the maps $\{l_n\}$ restricts to $Hom(C_*,L_*) \subset Hom(C_*,U(L_*)).$

Suppose $f_i \in Hom(C_*,L_*)$.  This implies $\Delta(f_i(x)) = f_i(x) \otimes 1 + 1 \otimes f_i(x), $ where the coproduct is in $U(L_*)$.  Since $\Delta$ is an algebra map, we see that
\begin{eqnarray*}
&\,&\Delta \circ l_n(f_1, \cdots, f_n) (x) \\
&=&  \sum_{\sigma \in S_n} \Delta (f_1(x_{n, \sigma(1)}))   \cdots \Delta(f_n(x_{n, \sigma(n)})) \\
&=&  \sum_{\sigma \in S_n} (f_1(x_{n, \sigma(1)} \otimes 1 + 1 \otimes f_1(x_{n,\sigma(1)}))) \cdots  (f_n(x_{n, \sigma(n)}) \otimes 1 + 1 \otimes f_n(x_{n, \sigma(n)})) \\
&=& \sum_{\sigma \in S_n}  f_1(x_{n, \sigma(1)}) \cdots f_n(x_{n, \sigma(n)}) \otimes 1 + 1 \otimes f_1(x_{n, \sigma(1)}) \cdots f_n(x_{n, \sigma(n)})
 \\
&\,&+ \sum_{\sigma \in S_n} \sum_j f_1(x_{n, \sigma(1) }) \cdots f(x_{n, \sigma(j)}) \otimes f(x_{n, \sigma(j+1)}) \cdots f_n(x_{(n, \sigma(n))}).
\end{eqnarray*}
We need to show that the cross terms cancel.  The composition $$C_* \overset{c_n}{\rightarrow} C_*^{\otimes n} \hookrightarrow T(C_*) \overset{\text{unshuffle}}{\rightarrow} T(C) \otimes T(C) $$ is zero by definition of a $C_\infty$ coalgebra.  Each permutation $\sigma$ is an $(i,j)$ unshuffle of some linear order of the $\{x_{n,i} \}$.  For example, for $S_3$, the collection of all the $(2,1)$ unshufflings of $x_{3,1} \otimes  x_{3,2} \otimes x_{3,3}$ and $x'_{3,1} \otimes x'_{3,2} \otimes x'_{3,3}= x_{3,2}\otimes  x_{3,1}\otimes  x_{3,3}$ exhausts all combinations of $x_{3,\sigma(1)} \otimes  x_{3,\sigma(2)} \otimes x_{3,\sigma(3)}$.

 The $L_\infty$ algebra on $Hom(C_*,L_*)$ is then given by
 $$l_n(f_1, \cdots f_n) (x) = \sum_{\sigma \in S_n} \xi(\sigma) f(x_{1,\sigma(1)}) \cdots f(x_{n,\sigma(n)}), $$
where the multiplications are in $U(L_*)$.
\end{proof}

Let $A_*$ and $B_*$ be two $A_\infty$ algebras and $\{f_n:A_*^{\otimes n} \rightarrow B_* \}$ an $A_\infty$ algebra morphism.  Suppose the Maurer Cartan equation is well defined for $A_*$ and $B_*$ (so either there are only finitely many maps defining the $A_\infty$ algebra or a suitable notion of convergence of the infinite sum holds). Let $\tau \in A_*$ be a Maurer Cartan element.  That is,
$$\partial_A \tau + m_2^A(\tau \otimes \tau) + m_3^A(\tau \otimes \tau \otimes \tau) + \cdots=0.$$
The following well-known lemma shows how to obtain a Maurer Cartan element in $B_*$ from $\tau$ and $\{f_n\}$.

\begin{lem}
Let $A_*,B_*$ be two $A_\infty$ algebras and $\{f_n:A_*^{\otimes n}
\rightarrow B_*\}$ be an $A_\infty$ algebra morphism between them.  If $\tau$ is a Maurer
Cartan element in $A_*$ then
$$\tau' = f(\tau) + f_2(\tau \otimes \tau) + \cdots + f_n(\tau^{\otimes n}) + \cdots $$
is a Maurer Cartan element in $B_*$.
\end{lem}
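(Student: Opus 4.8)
The plan is to recast the statement in the language of the bar construction, where it becomes essentially a one-line consequence of the fact that a morphism of coalgebras carries group-like elements to group-like elements. Recall that the $A_\infty$ algebra $(A_*, \{m_n\})$ is the same data as a codifferential $D_A$ of square zero on the tensor coalgebra $\widehat T(A_*[-1])$, namely the unique coderivation whose component landing in the cogenerators $A_*[-1]$ restricts on $A_*[-1]^{\otimes n}$ to $m_n$; likewise $(B_*, \{m_n\})$ carries a codifferential $D_B$. The $A_\infty$ morphism $\{f_n\}$ is precisely a morphism of dg coalgebras $F:\widehat T(A_*[-1]) \rightarrow \widehat T(B_*[-1])$, meaning $F$ is a map of coalgebras for the deconcatenation coproduct satisfying $F D_A = D_B F$.

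Associated to the Maurer Cartan element $\tau$ I would introduce the \emph{group-like} element
$$ e^\tau = \sum_{n=0}^\infty \tau^{\otimes n} = 1 + \tau + \tau \otimes \tau + \cdots \in \widehat T(A_*[-1]). $$
Two computations drive the proof. First, because $D_A$ is a coderivation, applying it to $e^\tau$ and collecting the windows factors the result as
$$ D_A(e^\tau) = e^\tau \otimes \Big( \sum_{n \geq 1} m_n(\tau^{\otimes n}) \Big) \otimes e^\tau, $$
so that $D_A(e^\tau)=0$ if and only if $\tau$ satisfies the Maurer Cartan equation in $A_*$. Second, since $F$ is a coalgebra map it sends the group-like element $e^\tau$ to a group-like element of $\widehat T(B_*[-1])$; every such element has the form $e^{\tau'}$, and inspecting its component in the single tensor factor $B_*[-1]$ reads off exactly
$$ \tau' = \sum_{n \geq 1} f_n(\tau^{\otimes n}), $$
the element in the statement. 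Combining these with $F D_A = D_B F$ gives $D_B(e^{\tau'}) = D_B(F(e^\tau)) = F(D_A(e^\tau)) = F(0)=0$, and the first computation applied in $B_*$ then says precisely that $\tau'$ is a Maurer Cartan element.

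The main point requiring care is convergence, not algebra: each of $e^\tau$, $D_A(e^\tau)$, $F(e^\tau)$, and the resulting sum $\sum_n f_n(\tau^{\otimes n})$ is an infinite series, so everything must be interpreted in the completed tensor coalgebra $\widehat T$. In the intended application this is automatic because $m_n(\tau^{\otimes n})$ and $f_n(\tau^{\otimes n})$ land in ever higher tensor powers, as already noted in the discussion of the Maurer Cartan equation above, so in each fixed tensor degree only finitely many terms contribute; I would phrase the argument to apply whenever such a convergence hypothesis holds, matching the standing assumption of the lemma. A secondary nuisance is the bookkeeping of Koszul signs in the factorization of $D_A(e^\tau)$ and in reading off $\tau'$; these are exactly the signs built into the shift $[-1]$ and into the coderivation and coalgebra-morphism conventions, and they are what make the shifted formula $\sum_n f_n(\tau^{\otimes n})$ emerge with no extra signs. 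One could instead give a direct computational proof by substituting $\tau^{\otimes n}$ into the defining relations of an $A_\infty$ morphism and summing, but the group-like argument isolates the structural reason the lemma holds and keeps the sign bookkeeping implicit in the coalgebra formalism.
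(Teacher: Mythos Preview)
Your argument is correct and is the standard conceptual proof of this fact via the bar construction. Note, however, that the paper does not actually supply a proof of this lemma: it is introduced as a ``well-known lemma'' and stated without argument. So there is no ``paper's proof'' to compare against; what you have written is precisely the kind of justification one gives for this result, and it is consistent with the paper's conventions (in particular with its definition of an $A_\infty$ algebra as a square-zero coderivation on $T(V[-1])$ and of an $A_\infty$ morphism as a dg coalgebra map between bar constructions). Your remarks on convergence also match the paper's standing assumption that the Maurer--Cartan sums are taken in a completed setting where each tensor degree receives only finitely many contributions.
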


\subsection{Maurer Cartan Equation and Differential Graded Algebra Maps}

The following lemmas will be used to construct twisting cochains.

\begin{lem}
Let $C_*$ be an $A_\infty$ coalgebra and $A_*$ an associative algebra.  There is a one to one correspondence between twisting cochains $\tau:C_* \rightarrow A_*$ and differential graded algebra maps $\tau_T: T(C_*[-1]) \rightarrow A_*$. \end{lem}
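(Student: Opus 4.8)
The plan is to exploit the fact that $T(C_*[-1])$ is the \emph{free} associative algebra on $C_*[-1]$, together with the description of the $A_\infty$ coalgebra structure on $C_*$ as a square-zero derivation $D$ of that tensor algebra (the cobar differential). The statement then reduces to the universal property of the tensor algebra plus a ``derivation agrees on generators'' argument; it is the natural homotopy-algebra extension of the classical correspondence recorded as property (2) above.

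First I would establish the bijection on the underlying, non-differential level. Since $T(C_*[-1])$ is free as an associative algebra on the generating space $C_*[-1]$, an algebra homomorphism $\tau_T : T(C_*[-1]) \to A_*$ is exactly the data of its restriction to generators, i.e.\ a linear map $C_*[-1] \to A_*$. A degree $-1$ map $\tau : C_* \to A_*$ is precisely a degree $0$ map $C_*[-1] \to A_*$, so sending $\tau$ to the unique multiplicative extension $\tau_T(x_1 \otimes \cdots \otimes x_n) = \tau(x_1)\cdots \tau(x_n)$ (with the Koszul signs coming from the shift) gives a bijection between such $\tau$ and algebra homomorphisms $\tau_T$. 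This accounts for everything except the differentials.

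Next I would recall that the $A_\infty$ coalgebra structure $\{c_n\}$ is exactly a square-zero derivation $D$ on $T(C_*[-1])$, determined on generators by $D|_{C_*[-1]} = \sum_n c_n$ with $c_n : C_*[-1] \to C_*[-1]^{\otimes n} \subset T(C_*[-1])$, so that $(T(C_*[-1]), D)$ is a genuine dg algebra. The remaining task is to show $\tau_T$ is a chain map, $\partial_A \circ \tau_T = \tau_T \circ D$, if and only if $\tau$ is a twisting cochain. The key observation is that both $\partial_A \circ \tau_T$ and $\tau_T \circ D$ are $(\tau_T,\tau_T)$-derivations $T(C_*[-1]) \to A_*$: the first because $\partial_A$ is a derivation of $A_*$ and $\tau_T$ is multiplicative, the second because $D$ is a derivation of $T(C_*[-1])$ and $\tau_T$ is multiplicative. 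Since a derivation over the algebra map $\tau_T$ is determined by its restriction to the generating space $C_*[-1]$, the two maps coincide everywhere iff they coincide on $C_*[-1]$.

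Finally I would evaluate both sides on a generator $x \in C_*[-1]$ coming from $c \in C_*$. There $(\partial_A \circ \tau_T)(x) = \partial_A \tau(c)$, while $(\tau_T \circ D)(x) = \sum_n \tau_T(c_n(x)) = \tau(c_1 c) + \sum_{n \ge 2} m_A(\tau^{\otimes n})(c_n(c))$, since $\tau_T$ multiplies the $\tau$-values of the tensor factors using $m_A$. Equating the two and moving everything to one side yields precisely the Maurer--Cartan equation $\partial_A \tau + \tau c_1 + \sum_{n\ge 2} m_n^{Hom}(\tau, \dots, \tau) = 0$ defining a twisting cochain (here $c_1 = \partial_C$). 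The one point requiring genuine care is the bookkeeping of signs introduced by the degree shift $[-1]$ (the décalage): this is what reconciles the derivation signs of $D$ and the multiplication signs of $A_*$ with the signs in the Maurer--Cartan equation, and I expect this sign matching, rather than any conceptual obstruction, to be the only real work. One should also note that when infinitely many $c_n$ are nonzero the value $D(x)$ lives in $\widehat T(C_*[-1])$, so $\tau_T$ is taken continuous and the identity is read in the completed setting, exactly as in the convergence discussion above.
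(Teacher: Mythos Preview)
Your argument is correct and is essentially the same as the paper's: define $\tau_T$ on generators and extend multiplicatively, then check the chain-map condition on generators using the cobar derivation to recover exactly the Maurer--Cartan equation (the converse being restriction). You are in fact more explicit than the paper in justifying why it suffices to check on generators (both $\partial_A\circ\tau_T$ and $\tau_T\circ D$ are $(\tau_T,\tau_T)$-derivations), and your remarks on signs and completion are appropriate caveats the paper leaves implicit.
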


\begin{proof}
Let $\partial^{T(C)}:T(C_*[-1]) \rightarrow T(C_*[-1])$ be the derivation of
square zero given by the $A_\infty$ coalgebra on $C_*$.
Given a twisting cochain $\tau:C_* \rightarrow A_*$, let $\tau_T(c_1
\otimes \cdots \otimes c_n) = \tau(c_1) \cdots \tau(c_n).$  Then by
construction, $\tau_T$ is an algebra map. It is a chain map, because
\begin{eqnarray*}
\partial^H (\tau (c)) &=& \tau \partial^C (c) + m_2^{A} \circ (\tau \otimes \tau) \circ c_2( c) + m_3^A \circ \tau^{\otimes 3}\circ  c_3(c) \\
&=& \tau \partial^{T(C)}(c),
\end{eqnarray*}
where the first equality is due to the Maurer Cartan equation for $\tau$ and the second equality is the definition of $\partial^{T(C)}$ in terms of the maps $c_n:C_*[-1] \rightarrow C_*[-1]^{\otimes n}$.  Conversely, given a map of differential graded algebras $\tau_T:T(C_*)\rightarrow A_*$ restricting $\tau$ to $C_*$ defines a twisting cochain.

\end{proof}

\begin{lem} \label{lem:liemapsandMC}
Let $C_*$ be a $C_\infty$ coalgebra and $H_*$ a Hopf algebra.   There is a one to one correspondence between twisting cochains $\tau:C_* \rightarrow Prim(H_*)$ and differential graded Lie algebra maps $\mathcal{L}(C_*[-1]) \rightarrow Prim(H_*)$.
\end{lem}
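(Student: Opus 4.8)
The plan is to reduce the assertion to the associative correspondence of the preceding lemma (twisting cochains $C_* \to A_*$ versus dg algebra maps $T(C_*[-1]) \to A_*$) by passing through the universal enveloping algebra. Two standard facts form the backbone: $U(\mathcal{L}(C_*[-1])) = T(C_*[-1])$ as graded algebras, so that $\mathcal{L}(C_*[-1])$ is realized inside the tensor algebra as the Lie subalgebra generated by $C_*[-1]$ (equivalently, as the primitives for the shuffle coproduct); and the adjunction $Hom_{Assoc}(U\mathfrak g, H_*) \cong Hom_{Lie}(\mathfrak g, (H_*)_{Lie})$ between $U$ and the commutator functor.

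First I would verify that the cobar differential $\partial^{T(C)}$ on $T(C_*[-1])$ restricts to $\mathcal{L}(C_*[-1])$, so that the latter is a genuine dg Lie algebra. This is the one place the $C_\infty$ hypothesis is indispensable (and not merely $A_\infty$): the defining condition that each $c_n$ composes to zero with the unshuffle coproduct says exactly that $Im(c_n) \subset \mathcal{L}(C_*[-1])$, i.e.\ the generators are carried into the free Lie algebra. Since $\partial^{T(C)}$ is a derivation for concatenation and $\mathcal{L}(C_*[-1])$ is the Lie subalgebra generated by $C_*[-1]$, the graded Leibniz rule and induction on bracket length give $\partial^{T(C)}\bigl(\mathcal{L}(C_*[-1])\bigr) \subset \mathcal{L}(C_*[-1])$.

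Next I would assemble the bijection. The preceding lemma, applied with $A_* = H_*$, puts twisting cochains $\tau : C_* \to H_*$ in one-to-one correspondence with dg algebra maps $\tau_T : T(C_*[-1]) \to H_*$, and under it the condition $Im(\tau) \subset Prim(H_*)$ is equivalent to $\tau_T$ sending the generating space $C_*[-1]$ into $Prim(H_*)$. Because $Prim(H_*)$ is a Lie subalgebra of $(H_*)_{Lie}$ and $C_*[-1]$ generates $\mathcal{L}(C_*[-1])$, such a $\tau_T$ restricts to a dg Lie map $\mathcal{L}(C_*[-1]) \to Prim(H_*)$; conversely any dg Lie map $\mathcal{L}(C_*[-1]) \to Prim(H_*) \hookrightarrow (H_*)_{Lie}$ extends, uniquely by the adjunction, to a dg algebra map $T(C_*[-1]) \to H_*$ with primitive image on generators. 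Uniqueness in the adjunction together with bijectivity of the preceding lemma makes these assignments mutually inverse.

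The step I expect to take the most care is reconciling the two senses of ``twisting cochain valued in $Prim(H_*)$'': the statement intends a Maurer--Cartan element for the $L_\infty$ structure on $Hom(C_*, Prim(H_*))$ of Lemma \ref{lem:Hom(C,L)}, while the route above produces a Maurer--Cartan element for the $A_\infty$ structure on $Hom(C_*, H_*)$ with primitive image. These coincide on a single odd element: since $\tau$ is of odd (shifted) degree, for every $\sigma \in S_n$ the Koszul sign satisfies $\epsilon(\sigma) = sgn(\sigma)$, hence $\xi(\sigma) = 1$ and $l_n(\tau^{\otimes n}) = n!\, m_n^{Hom}(\tau^{\otimes n})$. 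Thus the $L_\infty$ equation $\sum_n \tfrac{1}{n!} l_n(\tau^{\otimes n}) = 0$ is term-by-term the $A_\infty$ equation $\sum_n m_n^{Hom}(\tau^{\otimes n}) = 0$, and Lemma \ref{lem:Hom(C,L)} already ensures each term lands in $Prim(H_*)$. With this identification the two correspondences agree, yielding the claimed bijection.
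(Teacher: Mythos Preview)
Your argument is correct. The paper's own proof is a two-line sketch: ``This lemma is proved in the same way as that of the previous. Note that a $C_\infty$ coalgebra defines a derivation of square zero on the free Lie algebra $\mathcal{L}(C_*[-1])$.'' The intended reading is a direct Lie analogue of the associative argument: extend $\tau$ freely to a Lie map $\mathcal{L}(C_*[-1]) \to Prim(H_*)$ and check that the Lie Maurer--Cartan equation is exactly the statement that this extension commutes with differentials.

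Your route is a mild variation: rather than rerunning the argument in the Lie category, you reduce to the associative case already proved, via $U(\mathcal{L}(C_*[-1])) = T(C_*[-1])$ and the $U \dashv (-)_{Lie}$ adjunction. Both approaches pivot on the same key point (the $C_\infty$ condition forces $Im(c_n) \subset \mathcal{L}(C_*[-1])$, so $\partial^{T(C)}$ restricts), which you make explicit and the paper only states. What your route buys is that you do not have to re-verify the chain-map calculation; what it costs is the extra bookkeeping of your Step~3, reconciling the $L_\infty$ and $A_\infty$ Maurer--Cartan equations on a single odd element. That reconciliation is correct and worth recording, since the paper leaves the meaning of ``twisting cochain into $Prim(H_*)$'' implicit.
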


\begin{proof}

This lemma is proved in the same way as that of the previous.  Note that a $C_\infty$ coalgebra defines a derivation of square zero on the free Lie algebra $\mathcal{L}(C_*[-1])$.

\end{proof}

\subsection{$C_\infty \, \text{coalg} \, \otimes_\tau \, \text{bialg}$ as an $A_\infty$ coalgebra using left multiplication} \label{subsection:multiplication}

Given a twisting cochain $\tau:C_* \rightarrow H_*$, we want to define a twisted $A_\infty$ coalgebra structure on $C_* \otimes H_*$.  First, we define the untwisted $A_\infty$ coalgebra.

\begin{lem}
Let $(C_*, \{c_n\})$ be an $A_\infty$ coalgebra and $H_*$ be an algebra with a strictly coassociative comultiplication.  Then $C_* \otimes H_*$ is an $A_\infty$ coalgebra with structure maps 
\begin{eqnarray*}
c_n^\otimes = c_n \otimes \left( (\Delta \otimes Id^{\otimes n-1}) \circ \cdots \circ \Delta \right) :C_* \otimes H_* \rightarrow (C_*\otimes H_*)^{\otimes n}.
\end{eqnarray*}
\end{lem}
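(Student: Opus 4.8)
The plan is to verify directly the $A_\infty$ coalgebra relations for the proposed maps $\{c_n^\otimes\}$, reducing them to the relations already known to hold for $\{c_n\}$ on $C_*$ together with the strict coassociativity of $\Delta$ on $H_*$. Dually to Lemma~\ref{lem:Hom(C,A)}, this is the assertion that the tensor product of an $A_\infty$ coalgebra with a strictly coassociative coalgebra is again an $A_\infty$ coalgebra; the essential hypothesis is that the $H_*$-factor be strict, since the tensor product of two genuinely homotopy-coassociative coalgebras would require a diagonal on the associahedra rather than the naive interleaving formula.

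First I would fix notation. Write $\Delta^{(m)} \colon H_* \to H_*^{\otimes m}$ for the $m$-fold iterated comultiplication, which is unambiguous precisely because $\Delta$ is strictly coassociative, with the convention $\Delta^{(1)} = Id_{H_*}$; in particular $c_1^\otimes = c_1 \otimes Id_{H_*}$, so the differential on $C_* \otimes H_*$ is $\partial^C \otimes Id$. Let $\mathrm{sh}_m \colon C_*^{\otimes m} \otimes H_*^{\otimes m} \xrightarrow{\ \cong\ } (C_* \otimes H_*)^{\otimes m}$ be the interleaving isomorphism carrying the appropriate Koszul signs, so that $c_m^\otimes = \mathrm{sh}_m \circ (c_m \otimes \Delta^{(m)})$.

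Next I would compute a single summand of the $A_\infty$ coalgebra relation
$$\sum_{k=1}^{n}\sum_{j=0}^{n-k} (Id^{\otimes j} \otimes c_k^\otimes \otimes Id^{\otimes n-k-j}) \circ c_{n-k+1}^\otimes .$$
Applying $c_{n-k+1}^\otimes$ and then $c_k^\otimes$ in the $(j+1)$-st slot affects the two tensor factors independently: on the $C_*$-factors it produces exactly $(Id^{\otimes j} \otimes c_k \otimes Id^{\otimes n-k-j}) \circ c_{n-k+1}$, while on the $H_*$-factors it inserts $\Delta^{(k)}$ into the $(j+1)$-st output of $\Delta^{(n-k+1)}$. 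The crux is that, by iterated coassociativity, $(Id^{\otimes j} \otimes \Delta^{(k)} \otimes Id^{\otimes n-k-j}) \circ \Delta^{(n-k+1)} = \Delta^{(n)}$ for every choice of $(j,k)$. Thus every summand carries the same $H_*$-part $\Delta^{(n)}$, and the whole sum factors, through the shuffle, as
$$\mathrm{sh}_n \circ \Big( \big[\textstyle\sum_{k,j}(Id^{\otimes j} \otimes c_k \otimes Id^{\otimes n-k-j}) \circ c_{n-k+1}\big] \otimes \Delta^{(n)} \Big).$$
The bracketed map is precisely the $A_\infty$ coalgebra relation for $C_*$, which vanishes, so the tensor relation vanishes as well.

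The step I expect to be the main obstacle is bookkeeping the suspension and interleaving signs introduced by the shuffles $\mathrm{sh}_m$ and by commuting the operators $c_k$ and $\Delta^{(k)}$ past the intervening tensor factors, and then checking that these signs are uniform across the sum so that the $C_*$-relation genuinely factors out. This should work because the $H_*$-comultiplication is a degree-$0$ chain map: for a fixed component $h_{(1)} \otimes \cdots \otimes h_{(n)}$ of $\Delta^{(n)}(h)$ the interleaving signs depend only on the now-fixed degrees of the $H_*$-slots and on the degrees produced by $c_n(c)$, exactly as in the signs already built into the relation for $C_*$. Hence the signs match term by term, the displayed factorization is sign-correct, and the lemma follows.
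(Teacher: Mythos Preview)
Your argument is correct and is precisely the ``straightforward'' verification the paper has in mind: it too appeals only to the $A_\infty$ relations on the $C_*$-factor together with strict coassociativity on the $H_*$-factor, without spelling out the factorization you make explicit. You have supplied the details the paper omits, but the approach is the same.
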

\begin{proof}
The proof is straightforward, using the $A_\infty$ coalgebra relations for $C_*$ terms and that $H_*$ is a strict coassociative coalgebra.
\end{proof}

\begin{remark}\label{remark:pictureproof}
Before we define an $A_\infty$ coalgebra structure on $C_* \otimes H_*$, we return to the classical setting of Brown's twisting cochains.  We introduce a graphical picture of $\partial_\tau$ and a graphical proof that $\partial_\tau^2=0$.  This technique will be used to define the twisted $A_\infty$ coalgebra later on.  Let $C_*$ be a differential graded coalgebra and $H_*$ a differential graded bialgebra.  Let $\tau$ be a twisting cochain and $\partial_\tau$ be the twisted differential.

To represent $\partial_\tau:C_* \otimes H_* \rightarrow C_* \otimes H_*$, we draw two vertical lines, one to represent $C_*$ the other to represent $H_*$.  We draw a horizontal dash to denote the differential.  The twisting term applies the coproduct on $C_*$ and $\tau$ to one of the factors.  We represent the twisting cochain $\tau:C_* \rightarrow H_*$ by connecting the lines representing $C_*$ and $H_*$ with a line.  The resulting vertex on $C_*$ of valence three can be thought of as the coproduct and the vertex of valence three on $H_*$ can be thought of as the product.  We refer the reader to Figure \ref{figure:twisteddifferential}   for a picture of $\partial_\tau$.

\begin{figure}
\includegraphics[width=2.75in]{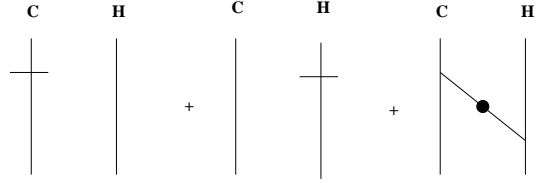}  \\

\caption{A graphical representation of $\partial_\tau= \partial_C \otimes1 + 1 \otimes \partial_A + (1\otimes m_A \otimes\tau \otimes 1) \Delta_C \otimes1.$  A vertical line with a dash represents the differential.  The diagonal line with a vertex represents the map $\tau:C \rightarrow H$.}
 \label{figure:twisteddifferential}
\end{figure}

We can prove that $\partial_\tau^2=0$ by analyzing the diagrams.  The top row in Figure \ref{figure:d2=0}  are the terms that remain after canceling the terms in $\partial_\tau^2$ that correspond to the tensor differential, which is well known to square to zero.  Note that because $\partial_C$ is a coderivation, the first and third terms in this row are equal to the first term in the second row of the figure.  Similarly, since $\partial_H$ is a derivation, the second and fourth terms on the first row equal the second term in the second row.  The coassociativity of $\Delta_C$ and the associativity of $m_H$ imply the last term of the first row is equal to the last term of the second row.  The bottom row then is equal to zero, because the middle lines describe the Maurer Cartan equation $\partial_H \tau + \tau \partial_C  + \tau \cdot \tau,$ which is zero by assumption.

\begin{figure}
\includegraphics[width=4.5in]{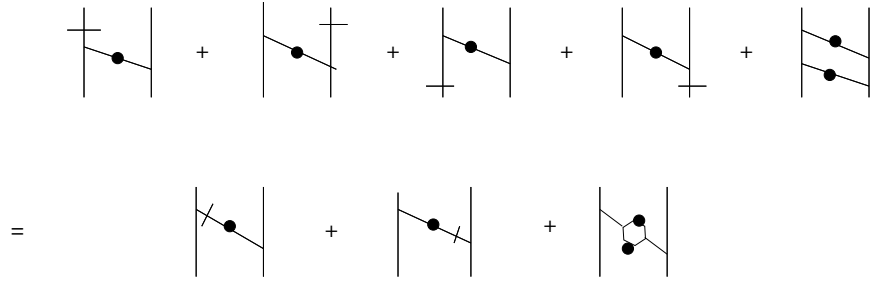}  \\

\caption{A graphical representation of $\partial_\tau^2=0$.  The top row represents the five terms that remain in $\partial_\tau^2$ when we cancel the terms corresponding to the tensor differential.  The bottom row is zero because the middle lines represent $\partial_H \tau + \tau \partial_C + \tau \cdot \tau$. }
\label{figure:d2=0}
\end{figure}

There is a similar argument showing that if $Im(\tau) \subset Prim(H_*)$, then $(C_* \otimes H_*, \partial_\tau)$ is a differential graded coalgebra.  The argument requires $C_*$ to be a cocommutative coalgebra.  We refer the reader to Figure \ref{figure:strictcoderivation}.

\begin{figure}
\includegraphics[width=3in]{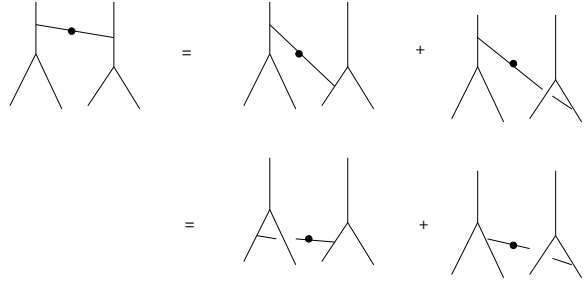}  \\

\caption{A graphical representation that $\partial_\tau$ is  a coderivation of the coproduct of $C_* \otimes H_*$.  The first equality is a result of the fact that multiplication by a primitive element is a coderivation.  The second equality is a result of the coproduct in $C_*$ being coassociative and cocommutative. }
\label{figure:strictcoderivation}
\end{figure}

\end{remark}

We can now describe how to twist the $A_\infty$ coalgebra.  Let $\tau:C_* \rightarrow Prim(H_*)$ satisfy the Lie Maurer Cartan equation.  First consider $c_1^{Hom}:C_* \otimes H_* \rightarrow C_* \otimes H_*$.  As in the strict setting, there is a twisting term of the form
\begin{eqnarray*}
C_* \otimes H_* &\overset{c_2}{\rightarrow}& C_* ^{\otimes 2} \otimes  H_* \overset{1 \otimes \tau \otimes 1}{\rightarrow}   C_* \otimes H_* ^{\otimes 2}\overset{1 \otimes m_H}{ \rightarrow} C_* \otimes H_*.
\end{eqnarray*}
But this twisting only takes $c_2$ into account and ignores all of the higher $c_n$ maps in the $C_\infty$ coalgebra structure on $C_*$.  To account for these maps, first apply $c_n$ to $C_*$ and apply $\tau^{\otimes n-1}$ to the last $n-1$ factors in $C_*^{\otimes n}$.  Since $Im(\tau) \subset Prim(H_*)$, we can bracket these $n-1$ terms in all possible ways to get another primitive element.  Then we multiply $Prim(H_*)$ and $H_*$ terms.  To sum up, $c_1^\tau$ consists of terms
\begin{eqnarray*}
C_* \otimes H_* & \overset{c_3 \otimes 1} {\rightarrow}& C_*^{\otimes 3} \otimes H_* \overset{1 \otimes \tau^{\otimes 2} \otimes 1} {\rightarrow} C_* \otimes H_*^{\otimes 2} \otimes H \overset{1 \otimes [,] \otimes 1}{\rightarrow} C_* \otimes H_* \otimes H_* \overset{1 \otimes m}\rightarrow C_* \otimes H_* \\
C_* \otimes H_* & \overset{c_4 \otimes 1} {\rightarrow}& C_*^{\otimes 4} \otimes H_* \overset{1 \otimes \tau^{\otimes 3} \otimes 1} {\rightarrow} C_* \otimes H_*^{\otimes 3} \otimes H \overset{1 \otimes [,] \otimes 1}{\rightarrow} C_* \otimes H_* \otimes H_* \overset{1 \otimes m}\rightarrow C_* \otimes H_* \\
C_* \otimes H_* & \overset{c_5 \otimes 1} {\rightarrow}& C_*^{\otimes 5} \otimes H_* \overset{1 \otimes \tau^{\otimes 4} \otimes 1} {\rightarrow} C_* \otimes H_*^{\otimes 4} \otimes H \overset{1 \otimes [,] \otimes 1}{\rightarrow} C_* \otimes H_* \otimes H_* \overset{1 \otimes m}\rightarrow C_* \otimes H_*
\end{eqnarray*}
and continue for all $n$ in this way.  By $[,]$ for three or more terms, we mean $$[x_1, \cdots, x_n] = \sum_{\sigma \in S_n}[x_{\sigma(1)}, [x_{\sigma(2)} , \cdots [x_{\sigma(n-1)}, x_{\sigma(n)}]]] .$$  Note the similarity of the twisted terms with the $L_\infty$ algebra on $Hom(C_*, Prim(H_*))$.  Since $c_1^\tau$ is an infinite sum, we need to address the issue of convergence  in $C_* \otimes H_*$.  In our application, $H_* = \widehat T(H_*(M)[-1])$, with the multiplication given by concatenation of tensors.  Let $x\in C_* \otimes H_*(M)[-1]$.  When $c_n$ is used to twist the differential, the corresponding term in $c_1^\tau(x)$ will be an element in $C_* \otimes (H_*(M)[-1])^{\otimes n}.$  Then $c_1^\tau$ consists of finite sums in different tensor products.  So in the completed tensor product, $c_1^\tau(x)$ is well defined.    

When $C_*$ is a strict dg coalgebra, then $c_1^\tau$ is the same as the twisted differential $\partial_\tau$ in Brown's construction.  So we write $c_1^\tau$ by $\partial_\tau$.

The higher maps $c_n$ can be twisted in the same manner as $c_1$.  To twist $c_2:C_* \otimes H_* \rightarrow C_*^{\otimes 2} \otimes H_*^{\otimes 2}$, we apply $c_n$ for $n>2$, then $\tau^{n-1}$ to the last $n-2$ factors of $C_*^{\otimes n}$, and bracketing these $n-2$ terms in all possible ways, multiplying the result with the element in $H_*$, and finally applying the coproduct in $H_*$.  For $n=3$, the process is the composition of
\begin{eqnarray*}
C_* \otimes H_* \overset{c_3 \otimes 1} {\rightarrow} C_*^{\otimes 3}\otimes H_* \overset{1^{\otimes 2} \otimes \tau \otimes 1} {\rightarrow} C_*^{\otimes 2} \otimes H_* \otimes H_*  \overset{1 \otimes m} {\rightarrow} C_*^{\otimes 2} \otimes H_*  \overset{1^{\otimes 2} \otimes \Delta}\rightarrow C_*^{\otimes 2} \otimes H_*^{\otimes 2}.
\end{eqnarray*}
The resulting map is denoted $c_2^\tau:C_* \otimes H_* \rightarrow (C_*  \otimes H_*)^{\otimes 2}$.

For $n>3$, we must use the Lie bracket, and the composition of maps is
\begin{eqnarray*}
C_* \otimes H_*  \overset{c_n \otimes 1} {\rightarrow} C_*^{\otimes n}\,\otimes H_* \overset{1^{\otimes 2} \otimes \tau^{\otimes n-2} \otimes 1} {\rightarrow} C_*^{\otimes 2} \otimes H^{\otimes n-2} \otimes H_*  \\
\overset{1^{\otimes 2} \otimes [,]^{n-2} \otimes 1} {\rightarrow} 1 \otimes H_* \otimes H_*
 \overset{1 \otimes m} {\rightarrow} C_*^{\otimes 2} \otimes H_*  \overset{1^{\otimes 2} \otimes \Delta}\rightarrow C_*^{\otimes 2} \otimes H_*^{\otimes 2}.
\end{eqnarray*}
To show that $\{c_n^\tau\}$ defines an $A_\infty$ coalgebra on
$C_* \otimes H_*$, we use the diagrams as in Remark \ref{remark:pictureproof}.  For a
picture of $\partial_\tau$ we refer the reader to Figure
\ref{figure:twistedd2}.  For a picture of $c_2^\tau$, we refer the
reader to Figure \ref{figure:twistedc2}.  Since multiplying by a
primitive element is a coderivation, we have some identities for the
terms in $c_2^\tau$.  These identities are described in Figure
\ref{figure:primitivecoderivation}.

\begin{figure}
\includegraphics[width=4.5in]{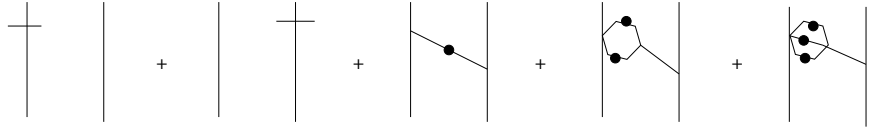}  \\

\caption{A graphical representation of $\partial_\tau$.  The terms are $\partial_C \otimes1 + 1 \otimes \partial_H +   (1 \otimes m) (1 \otimes \tau \otimes 1)c_2 \otimes 1 +  (1 \otimes 1 \otimes \tau )c_3$ }
 \label{figure:twistedd2}
\end{figure}

\begin{figure}
\includegraphics[width=4.5in]{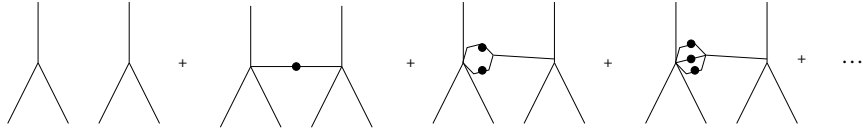}  \\

\caption{A graphical representation of $c_2^\tau$.}
\label{figure:twistedc2}
\end{figure}

\begin{figure}
\includegraphics[width=3.5in]{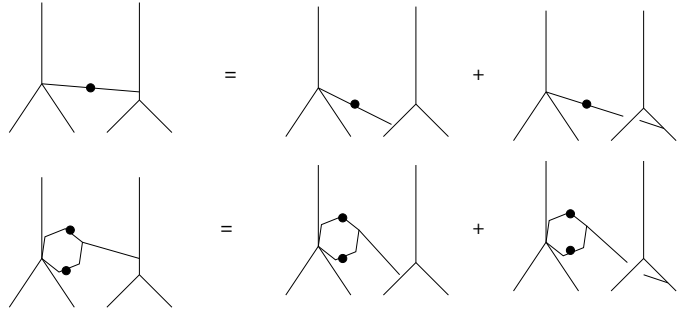}  \\

\caption{The above identities hold because $Im(\tau) \subset
Prim(H_*)$ and multiplying by a primitive element is a coderivation.
The same holds true for the other terms of $c_2^\tau$ and also for
$c_n^\tau$.} \label{figure:primitivecoderivation}
\end{figure}

We can now show that $\{\partial_\tau, c_2^\tau, c_3^\tau, \cdots \}$ define an $A_\infty$ coalgebra.  The proof of the theorem uses a graphical approach.  

\begin{thm} \label{thm:twistedcoalgebra}
Let $C_*$ be a $C_\infty$ coalgebra, $H_*$ a dg bialgebra, and $\tau:C_* \rightarrow H_*$ a twisting cochain such that $Im(\tau) \subset Prim(H_*)$.  The maps $\{\partial_\tau, c_2^\tau, c_3^\tau, \cdots \}$ define an $A_\infty$ coalgebra on $C_*  \otimes H_* $.
\end{thm}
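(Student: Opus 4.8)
The plan is to recognize the claim as the statement that the family $\{\partial_\tau, c_2^\tau, c_3^\tau, \dots\}$ assembles into a square-zero derivation on the tensor algebra $T((C_* \otimes H_*)[-1])$, which is precisely the data of an $A_\infty$ coalgebra. Writing $W = C_* \otimes H_*$, any family of maps $c_n^\tau : W[-1] \to W[-1]^{\otimes n}$ extends uniquely to a derivation $D_\tau$ of $T(W[-1])$, so the derivation property is automatic and the entire content is $D_\tau^2 = 0$, equivalently the $A_\infty$ coalgebra relations
\[
\sum_{k=1}^{n}\sum_{j=0}^{n-k} \left( Id^{\otimes j}\otimes c_k^\tau \otimes Id^{\otimes (n-k-j)} \right) \circ c_{n-k+1}^\tau = 0
\]
for every $n$. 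I would verify these by the graphical calculus of Remark \ref{remark:pictureproof}, generalizing the pictures of $\partial_\tau^2 = 0$ to the higher relations; note that the case $n=1$ is exactly $\partial_\tau^2 = 0$, already handled by that remark, so it serves as the base case.

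First I would split $D_\tau = D^\otimes + \delta$, where $D^\otimes$ is the derivation determined by the untwisted maps $c_n^\otimes = c_n \otimes \left( (\Delta \otimes Id^{\otimes n-1})\circ \cdots \circ \Delta \right)$ and $\delta$ is the derivation collecting all the twisting terms (those built by applying some $c_k$ with $k$ exceeding the number of output $C_*$-factors, applying $\tau$ to the surplus factors, bracketing their primitive images into a single primitive, multiplying it into the relevant $H_*$-slot, and redistributing by the coproduct of $H_*$). Expanding gives $D_\tau^2 = (D^\otimes)^2 + (D^\otimes \delta + \delta D^\otimes) + \delta^2$. The first summand vanishes by the untwisted lemma preceding Remark \ref{remark:pictureproof}, whose proof uses only the $A_\infty$ coalgebra relations among the $c_n$ and the strict coassociativity of $\Delta_H$. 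It therefore remains to show that the graded commutator $D^\otimes \delta + \delta D^\otimes$ together with $\delta^2$ cancels.

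The organizing principle, exactly as in the strict pictures, is that the surviving diagrams fall into matched pairs. Terms in which $D^\otimes$ differentiates the $C_*$-side are combined using the $A_\infty$ coalgebra relations among $\{c_n\}$; terms acting on the $H_*$-side are combined using coassociativity of $\Delta_H$ and associativity of $m_H$; and every place where a $\tau$-connector must be slid past a coproduct of $H_*$ is handled by the identity that multiplication by a primitive element is a coderivation, as recorded in Figure \ref{figure:primitivecoderivation}. After these combinations, the only diagrams remaining are those in which a cluster of $\tau$-connectors, the differential, and the nested brackets act on a single block of $C_*$-factors. Grouping these by the number $r$ of $\tau$-connectors, I would identify each group as a diagram-level encoding of one homogeneous piece of the Lie Maurer--Cartan equation for $\tau$ in $Hom(C_*, Prim(H_*))$; by Lemma \ref{lem:Hom(C,L)} and Lemma \ref{lem:liemapsandMC} the symmetrized brackets $[x_1,\dots,x_r]$ appearing in the $c_n^\tau$ are exactly the $L_\infty$ brackets $l_r$, and the fact that $\tau$ satisfies this Maurer--Cartan equation makes each group vanish. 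The figures for $\partial_\tau$ and $c_2^\tau$ (Figures \ref{figure:twistedd2} and \ref{figure:twistedc2}) provide the templates for the diagrams being manipulated.

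The main obstacle is the combinatorial and sign bookkeeping in this last step: for each fixed $n$ and each fixed number $r$ of $\tau$-connectors one must check that the many ways of distributing the maps $c_k$ of $C_*$, the iterated coproducts of $H_*$, and the nested brackets across the output tensor factors collect, with the correct Koszul signs, into a single copy of the $r$-th Maurer--Cartan term, with nothing left over. This uses the primitivity hypothesis $Im(\tau)\subset Prim(H_*)$ in an essential way, to commute $\tau$-images through $\Delta_H$, and it uses the full $C_\infty$ (unshuffle) hypothesis on $C_*$, which is what turns the permutation sums into honest iterated brackets, exactly as in the cancellation of cross-terms in the proof of Lemma \ref{lem:Hom(C,L)}. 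I expect the argument to proceed by a diagrammatic induction on $n$, with the cases $n=1$ (the picture proof of $\partial_\tau^2=0$) and $n=2$ establishing the pattern that the general case then repeats.
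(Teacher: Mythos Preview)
Your proposal is correct and follows essentially the same approach as the paper: both arguments use the graphical calculus of Remark~\ref{remark:pictureproof}, reduce the $A_\infty$ relations via the $C_\infty$ coalgebra relations on $C_*$, the dg bialgebra identities on $H_*$, and the primitive-coderivation identity (Figure~\ref{figure:primitivecoderivation}) to expressions encoding the Maurer--Cartan equation for $\tau$. The only difference is organizational: the paper verifies the relations level by level (first $\partial_\tau^2=0$, then the compatibility of $c_2^\tau$ with $\partial_\tau$, then sketches the general pattern), whereas you package everything at once via the decomposition $D_\tau = D^\otimes + \delta$ and the expansion $D_\tau^2 = (D^\otimes)^2 + [D^\otimes,\delta] + \delta^2$; this is a cleaner bookkeeping device but not a genuinely different argument.
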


\begin{proof}
We first show that $\partial_\tau$ is a differential.  To show that $\partial_\tau^2=0$ we will show that  expanding the terms yield many occurrences of the Maurer Cartan equation.

\begin{figure}
\includegraphics[width=4.5in]{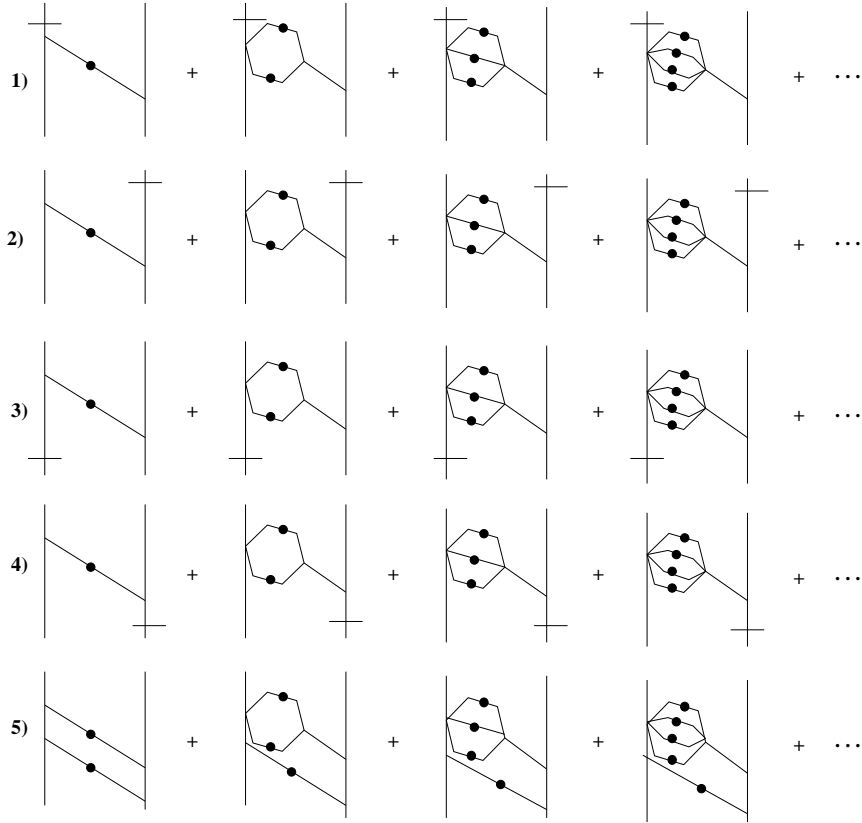}  \\

\caption{Some of therms in terms of $\partial_\tau^2$.  We have left out the terms in the tensor part, as these are known to square to zero.}
\label{figure:twistedc1^2}
\end{figure}

We list some of the terms of $\partial_\tau^2$ in Figure \ref{figure:twistedc1^2}. The fact that $\partial_H$ is a derivation is expressed diagrammatically as in Figure \ref{figure:partialHderivation}.  This relation can be used to add diagrams in the second and fourth rows of Figure \ref{figure:twistedc1^2}.  In place of the coderivation relations, we must use the $C_\infty$ coalgebra relations for $(C_*,\{c_n\})$ .  The relation for $n=3$ is expressed in Figure \ref{figure:partialCrelations}.  We use these relations to add figures in the first and third rows of Figure \ref{figure:twistedc1^2}.  Some of the resulting diagrams will either cancel with diagrams in rows five or higher.  The rest of the diagrams are shown in Figure \ref{figure:MCequations}.  The Maurer Cartan equation is present in each row.   Since $\tau$ is a twisting cochain, the sum to zero.

\begin{figure}
\includegraphics[width=2.5in]{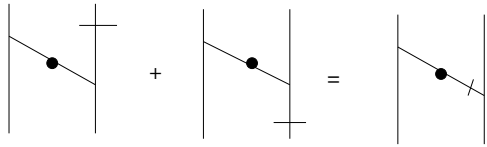}  \\

\caption{The equality here come from the fact that $H_*$ is a differential graded algebra.}
 \label{figure:partialHderivation}
\end{figure}

\begin{figure}
\includegraphics[width=4.5in]{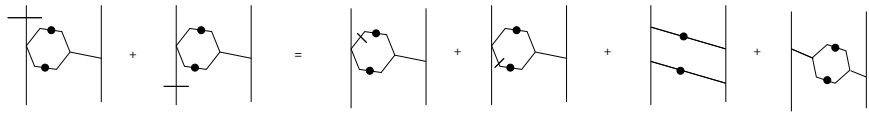}  \\

\caption{The equality here comes from the fact that $(C_*,\{c_n\} )$ is a $C_\infty$ coalgebra.}
\label{figure:partialCrelations}
\end{figure}

\begin{figure}
\includegraphics[width=4in]{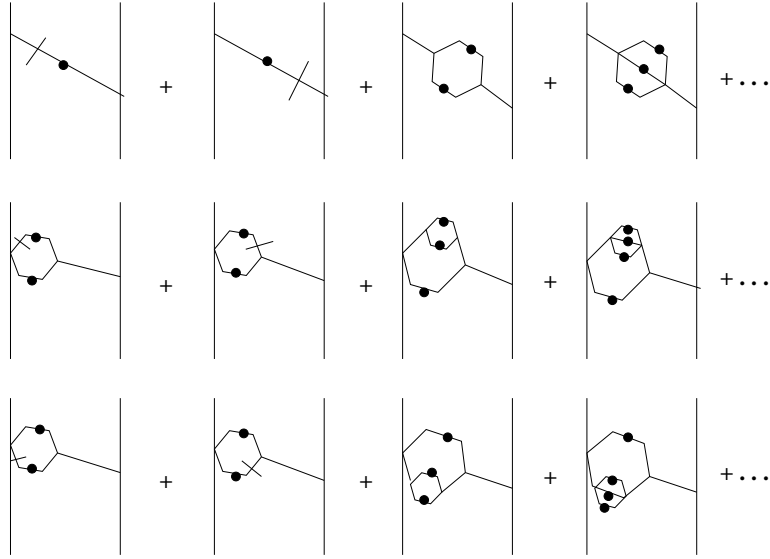}  \\

\caption{These remaining terms in $(\partial_\tau)^2$ sum to zero because $\partial_H \tau + \tau \partial_H + m_2^{Hom}(\tau,\tau)+ m_3^{Hom}(\tau,\tau,\tau) + \cdots =0$.}
\label{figure:MCequations}
\end{figure}

Next, we show that $c_2^\tau$ is a coderivation of $\partial_\tau$.   In Figure \ref{figure:coderivationLHS}, the graphs representing $c_2^\tau \circ \partial_\tau$ are drawn and in Figure \ref{figure:coderivationRHS}  the graphs representing $(\partial_\tau \otimes 1 )\circ c_2^\tau$ are drawn.  The graphs representing $(1 \otimes \partial_\tau) \circ c_2^\tau$ are the same as the graphs representing $(\partial_\tau \otimes 1) \circ c_2^\tau$ except the graphs are connected by the right output edge of each tree as opposed to the left output edge.

Multiplication by a primitive element is a coderivation, which gives
us identities expressed  in Figure
\ref{figure:primitivecoderivation}.  This allows us to compare the
graphs from $c_2^\tau \circ \partial_\tau$ with the graphs from
$(\partial_\tau\otimes 1 + 1 \otimes \partial_\tau)\circ c_2^\tau$.  Note that
on the left hand side of each pairing, we have many compositions of
the form $(1 \otimes \cdots \otimes c_j \otimes \cdots \otimes
1)\circ c_i, $ where $c_i,c_j$ are maps of the $C_\infty$ coalgebra
on $C_*$.  The relations in the $C_\infty$ coalgebra state that
$\sum_{i+j+1= n}(1 \otimes \cdots \otimes c_j \otimes \cdots \otimes 1
)\circ c_i=0$.  Noting which maps in our graphs appear in the sum
and which graphs do not appear, we can apply the $C_\infty$
coalgebra relation to obtain many identities.  When this is done, we
obtain graphs which involve the Maurer Cartan equation for $\tau$, just
as we did in showing $\partial_\tau^2=0$.  Since $\tau$ is a
twisting cochain, this sum is zero and $c_2^\tau$ is a coderivation
of $\partial_\tau$.  In Figure \ref{figure:coalgebrarelations} we
organize the graphs in $c_2^\tau \circ \partial_\tau + (\partial_\tau \otimes
1 + 1 \otimes \partial_\tau)\circ c_2^\tau$.  The relations for the coalgebra structure on
$C_*$ state that the sum of these graphs are equal to the graphs
in Figure \ref{figure:coalgebrarelations2}.  The sum of these graphs is zero, because of the Maurer Cartan equation.

\begin{figure}
\includegraphics[width=4in]{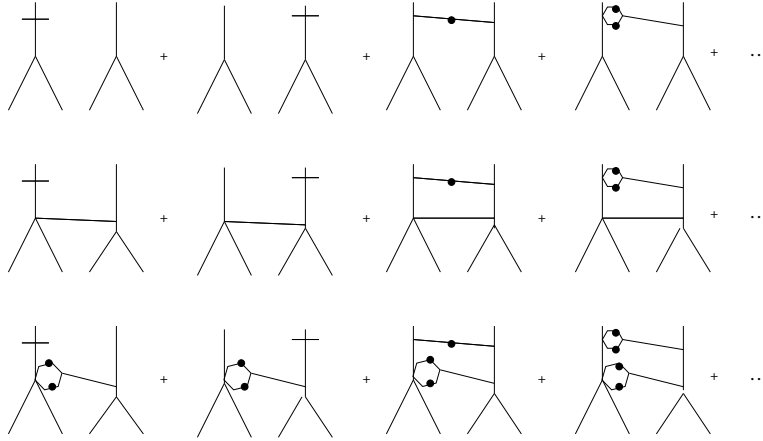}  \\

\caption{The graphs representing $c_2^\tau\circ \partial_\tau.$}
 \label{figure:coderivationLHS}
\end{figure}

\begin{figure}
\includegraphics[width=4in]{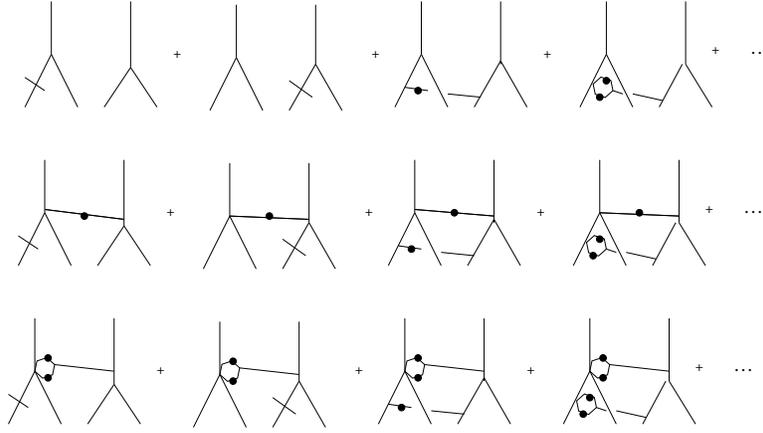}  \\

\caption{The graphs representing $\partial_\tau \otimes 1 \circ c_2^\tau.$}
\label{figure:coderivationRHS}
\end{figure}

The reader can see that this situation generalizes for $n>2$.  In each of these cases, we have many compositions involved in the $C_\infty$ coalgebra relation for $C_*$.  When we replace these graphs, using the coalgebra structure, we obtain graphs involving Maurer Cartan equation.  We summarize the relation in Figure \ref{figure:higherc}.

\begin{figure}
\includegraphics[width=3.5in]{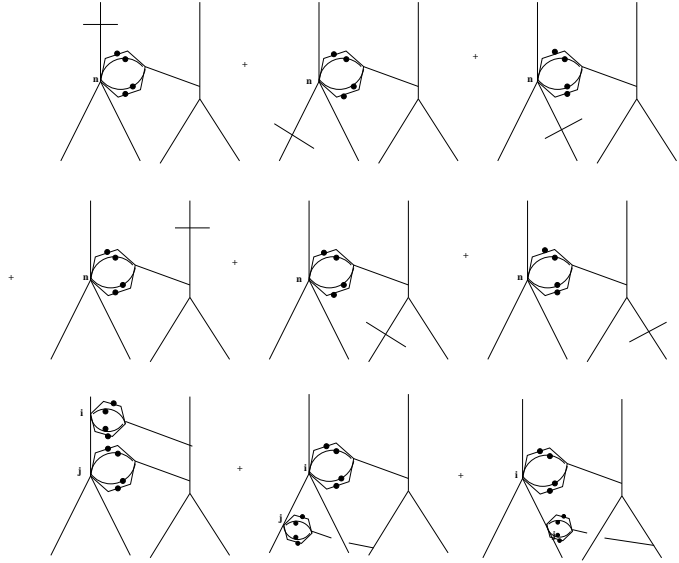}  \\

\caption{The graphs of $c_2^\tau\circ \partial_\tau+(\partial_\tau\otimes1+1\otimes \partial_\tau)c_2^\tau$ organized to show how the $C_\infty$ coalgebra on $C_*$ is used.}
\label{figure:coalgebrarelations}
\end{figure}

\begin{figure}
\includegraphics[width=3in]{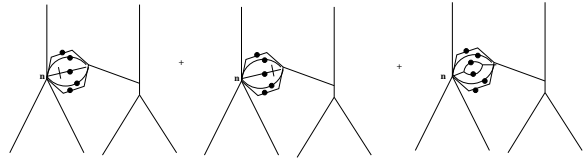}  \\

\caption{These graphs are equal to the graphs in Figure \protect{\ref{figure:coalgebrarelations}} using the $C_\infty$ coalgebra on $C_*$.  Note that these terms involve $\partial_H \tau + \tau \partial_C + \tau \cdot \tau + \tau \cdot \tau \cdot \tau + \cdots =0.$}
\label{figure:coalgebrarelations2}
\end{figure}

\begin{figure}
\includegraphics[width=4.5in]{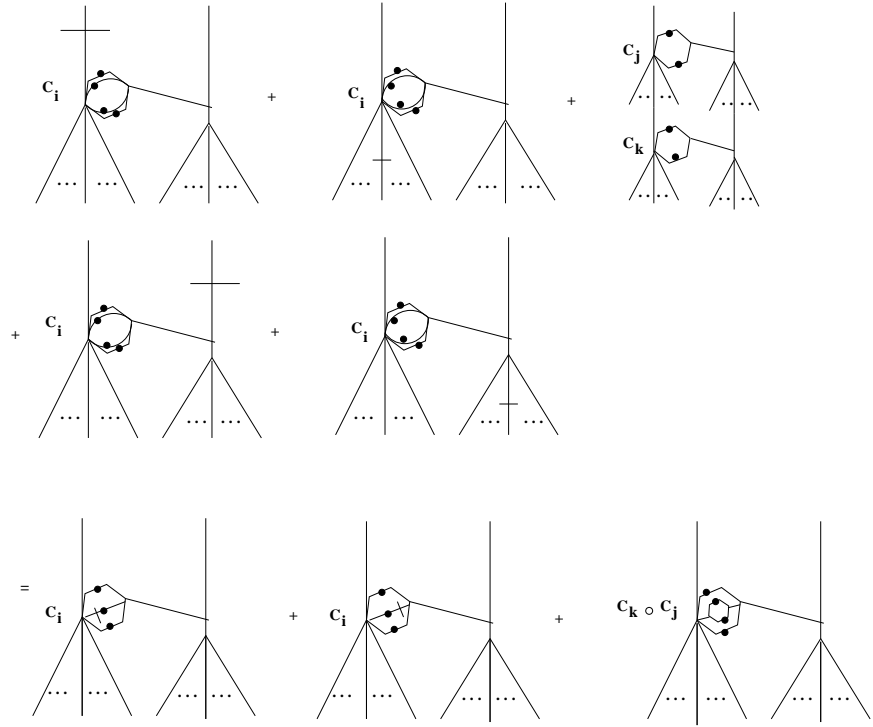}  \\

\caption{To show that $c_n^\tau$ form a coalgebra structure, use the relation above to get a sequence of graphs involving the Maurer Cartan equation.  The equality is due to the fact that $C_*$ is a $C_\infty$ coalgebra.  }
 \label{figure:higherc}
\end{figure}

\end{proof}

\subsection{$  C_\infty\, \text{coalg} \otimes_\tau \, \text{bialgebra}$ as an $A_\infty$ coalgebra using bracket action action}

In the previous section, we used the twisting cochain and left multiplication in $H_*$ to twist the $A_\infty$ coalgebra structure on $C_* \otimes H_*$.  In this section, we consider another action.  For $a \in H_*$, the bracket action of $a$ on $H_*$ is defined by $[a,x]= ax - xa.$  Note that $[a,-]$ is a derivation.  If $a$ is a primitive element, then $[a,-]$ is also a coderivation.  

Given a twisting cochain $\tau:C_* \rightarrow H_*$ such that $Im(\tau) \subset Prim(H_*)$, we define a twisted $A_\infty$ coalgebra structure on $C_* \otimes H_*$.  The process is the same as the one defining the previous twisted $A_\infty$ coalgebra, except we replace the multiplication in $H_*$ with the bracket action.  We use the same notation $\{\partial_\tau, c_2^\tau, c_3^\tau, \cdots \}$ and so we will be explicit when to use left multiplication and when to use the bracket action.

\begin{thm} \label{thm:conjugationcoalgebrabracket}
Let $C_*$ be a $C_\infty$ coalgebra, $H_*$ a dg bialgebra, and $\tau:C_* \rightarrow H_*$ a twisting cochain such that $Im(\tau) \subset Prim(H_*)$.  The maps $\{\partial_\tau, c_2^\tau, c_3^\tau, \cdots \}$, obtained from the bracket action, define an $A_\infty$ coalgebra on $C_*  \otimes H_* $.
\end{thm}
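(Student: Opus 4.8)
The plan is to run the graphical argument of Theorem~\ref{thm:twistedcoalgebra} essentially verbatim, changing only the final operation in each twisted term. Recall that in both constructions a term of $\partial_\tau$ or of $c_n^\tau$ is built by applying some $c_k$ to the $C_*$--factor, applying $\tau^{\otimes(k-1)}$ to the trailing factors, bracketing the resulting primitives into a single element $P=[\tau(c_{(1)}),\dots,\tau(c_{(k-1)})]\in Prim(H_*)$, and then combining $P$ with the $H_*$--factor $h$. The only difference between Theorem~\ref{thm:twistedcoalgebra} and the present theorem is that this last combination is $[P,h]=Ph-hP$ rather than $Ph$. Since the part of the construction that brackets the $\tau$--images among themselves is literally unchanged, the theorem reduces to checking that the three structural facts about left multiplication driving the proof of Theorem~\ref{thm:twistedcoalgebra} have valid analogues for the bracket action $[P,-]$.

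These three facts and their replacements are the following. First, the identity expressing that $\partial_H$ is a derivation of $m$ (Figure~\ref{figure:partialHderivation}) is replaced by $\partial_H[P,h]=[\partial_H P,h]+(-1)^{|P|}[P,\partial_H h]$, saying that $\partial_H$ is a derivation of the bracket action, which is immediate from $\partial_H$ being a derivation of $m$. Second, the fact that left multiplication by a primitive element is a coderivation, which underlies Figure~\ref{figure:primitivecoderivation}, is replaced by the fact recorded just before the theorem: for $P$ primitive, $[P,-]$ is a graded coderivation of $\Delta_H$. This is exactly what is needed to redraw Figures~\ref{figure:coderivationLHS}--\ref{figure:higherc} with $[P,-]$ in place of left multiplication. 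Third, the places where the earlier proof merged two successive twisted applications into one bracketed $\tau$--term using associativity of $m$ are now handled by the Jacobi identity for the adjoint action, $[\tau(c),[\tau(c'),h]]-(-1)^{|\tau(c)||\tau(c')|}[\tau(c'),[\tau(c),h]]=[[\tau(c),\tau(c')],h]$. This is precisely the identity that produces the nested brackets of the definition of $\partial_\tau$ and the higher $c_n^\tau$, so the iterated twisted terms reorganize as before.

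With these substitutions the bookkeeping of Theorem~\ref{thm:twistedcoalgebra} carries over unchanged. To see $\partial_\tau^2=0$ one expands, cancels the purely tensor part, applies the derivation identity in the $H_*$--direction and the $C_\infty$ relations for $(C_*,\{c_n\})$ in the $C_*$--direction (Figure~\ref{figure:partialCrelations}), and finds that the surviving diagrams are the bracket $[\,\mathcal{M}(\tau),h\,]$ of $h$ with the Maurer--Cartan expression $\mathcal{M}(\tau)=\partial_H\tau+\tau\partial_C+[\tau,\tau]+[\tau,\tau,\tau]+\cdots$, the $\tau$--images being combined by the nested brackets of the definition (the analogue of Figure~\ref{figure:MCequations}). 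Because $c_2$ is cocommutative, $\mathcal{M}(\tau)$ coincides with the twisting cochain equation for $\tau$, which vanishes since $Im(\tau)\subset Prim(H_*)$; hence every surviving diagram is zero. The verification that each $c_n^\tau$ is a coderivation of $\partial_\tau$ runs along the same lines (Figures~\ref{figure:coalgebrarelations2} and~\ref{figure:higherc}), with the cocommutativity of $c_2$, automatic for a $C_\infty$ coalgebra, playing the role that cocommutativity of $C_*$ played in the strict coderivation argument of Figure~\ref{figure:strictcoderivation}. Note that the bialgebra hypothesis suffices here: the bracket action, the coderivation property of $[P,-]$, and this Maurer--Cartan interpretation all use only the Lie algebra $Prim(H_*)$ and never the antipode, which is why this theorem needs only a dg bialgebra.

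The step I expect to cause the most trouble is the sign bookkeeping forced by the two--sided nature of the bracket $[P,h]=Ph-hP$. Because $[P,-]$ contains both a left and a right multiplication, verifying that Figure~\ref{figure:primitivecoderivation} survives amounts to checking that, for $P$ primitive, the cross terms coming from $\Delta_H(Ph)$ and $\Delta_H(hP)$ recombine into $[P,-]\otimes 1 + 1\otimes[P,-]$ applied to $\Delta_H(h)$ with exactly the Koszul signs already present in the multiplication proof; a careless sign here would break every subsequent cancellation. A useful sanity check is that the bracket--twisted differential is the difference of Brown's left--multiplication differential and its right--multiplication mirror image; since left and right multiplication commute, one expects the mixed terms between the two to cancel, which is consistent with the vanishing we are after. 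Once the coderivation identity for $[P,-]$ and the Jacobi replacement for associativity are confirmed with correct signs, the remainder of the proof is identical to that of Theorem~\ref{thm:twistedcoalgebra}.
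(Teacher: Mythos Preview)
Your proposal is correct and follows the same strategy as the paper: reduce to the graphical proof of Theorem~\ref{thm:twistedcoalgebra} by checking that the relevant properties of left multiplication carry over to the bracket action. The paper's actual proof is a single sentence, asserting that the only property of left multiplication used in Theorem~\ref{thm:twistedcoalgebra} is that multiplication by a primitive is a coderivation, and noting that the bracket action by a primitive shares this property. You go further, correctly isolating two additional ingredients the paper leaves implicit: that $\partial_H$ must be a derivation of the action (immediate for the bracket since $\partial_H$ is a derivation of $m$), and that the reorganization of iterated twisted terms, which in the multiplication case uses associativity of $m_H$, now rests on the Jacobi identity for the adjoint action together with the cocommutativity built into the $C_\infty$ coalgebra structure. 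Your closing observation that the bracket-twisted differential is the difference of the left- and right-multiplication twists is a useful consistency check not present in the paper.
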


\begin{proof}

The only property of left multiplication used in the proof of Theorem \ref{thm:twistedcoalgebra} is that left multiplication by a primitive element is a coderivation.  Since conjugation by a primitive element is a coderivation, the proof applies to this theorem as well.

\end{proof}

\subsection{$\text{Cyclic} \, C_\infty \, \text{coalg} \otimes_\tau \text{bialg}$ as an $A_\infty$ algebra using bracket action }

Sometimes a $C_\infty$ coalgebra has extra structure on it, allowing
one to define an algebra structure on $C_* \otimes H_*$.  We
consider the case when the coalgebra has a non-degenerate bilinear
form that is compatible with the coalgebra structure, i.e., a cyclic
$C_\infty$ coalgebra.  We review the relevant definitions.

A cyclic $A_\infty$ algebra consists of a finite dimensional $A_\infty$ algebra $(A_*, \{m_n\})$ and a non-degenerate bilinear form $\langle  ,  \rangle: A_* \otimes A_* \rightarrow k$ such that
\begin{eqnarray*}
\langle m_n(x_1, \cdots, x_n), x_0 \rangle = (-1)^N \langle m_n(x_0, \cdots, x_{n-1}), x_n \rangle,
\end{eqnarray*}
where $N=-1+ |x_0|(|x_1| + \cdots + |x_{n}|)$.  The bilinear form defines an isomorphism between $A$ and its dual.  The maps $m_n$ can then be viewed as elements in $A[-1]^{* \otimes n} \otimes A[-1] \cong A[-1]^{\otimes n+1}.$

\begin{lem}\label{lem:cyclicallyinvariant}
Let $(A_*, \{m_n\}, \langle ,  \rangle)$ define a cyclic $A_\infty$ algebra.  Then $m_n \in A[-1]^{\otimes n+1}$ is cyclically invariant.
\end{lem}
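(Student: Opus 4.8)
The plan is to translate the cyclic-symmetry axiom for the form $\langle\,,\,\rangle$ into the single statement that the tensor $m_n$ is fixed by the signed cyclic generator of $\Z/(n+1)$ acting on $A[-1]^{\otimes n+1}$. Once this translation is in place the lemma is immediate, since being fixed by the generator is by definition cyclic invariance; so essentially all the content lies in the sign bookkeeping.

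First I would make the identification $m_n\in A[-1]^{\otimes n+1}$ explicit. Non-degeneracy of $\langle\,,\,\rangle$ gives an isomorphism $A[-1]\cong A[-1]^*$, whence
\[ \mathrm{Hom}(A[-1]^{\otimes n},A[-1])\cong (A[-1]^{\otimes n})^*\otimes A[-1]\cong A[-1]^{\otimes n+1}. \]
Chasing $m_n$ through these isomorphisms produces a tensor $\widetilde m_n\in A[-1]^{\otimes n+1}$ whose slot-by-slot contraction (via the form) against $x_1\otimes\cdots\otimes x_n\otimes x_0$ returns $\langle m_n(x_1,\ldots,x_n),x_0\rangle$, the last slot, holding $x_0$, being the one dual to the output of $m_n$. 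Because the form is non-degenerate, these contractions determine $\widetilde m_n$ uniquely.

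Next I would introduce the signed cyclic generator $t$ of $\Z/(n+1)$ acting on $A[-1]^{\otimes n+1}$, sending $y_0\otimes y_1\otimes\cdots\otimes y_n$ to $y_1\otimes\cdots\otimes y_n\otimes y_0$ together with the Koszul sign incurred in moving $y_0$ to the back. By definition $\widetilde m_n$ is cyclically invariant if and only if $t\cdot\widetilde m_n=\widetilde m_n$, and this one relation already forces invariance under all of $\Z/(n+1)$. Pairing against test elements, $t\cdot\widetilde m_n=\widetilde m_n$ unwinds to
\[ \langle m_n(x_1,\ldots,x_n),x_0\rangle = \pm\,\langle m_n(x_0,\ldots,x_{n-1}),x_n\rangle, \]
so the lemma reduces to checking that the sign produced by $t$ coincides with the $(-1)^N$, $N=-1+|x_0|(|x_1|+\cdots+|x_n|)$, prescribed by the cyclic axiom.

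I expect the sign match to be the only real obstacle. The subtlety is that $\widetilde m_n$ lives in the shifted space $A[-1]^{\otimes n+1}$, so the Koszul sign produced by $t$ is computed with the shifted degrees, whereas $N$ is recorded in the unshifted degrees; converting between the two gradings is precisely what produces the constant $-1$ in $N$ and reconciles the apparent asymmetry between $|x_0|$ and $|x_n|$ on the two sides of the axiom. One must also keep track of the signs introduced when contracting $\widetilde m_n$ against the $x_i$ past the intervening factors, and confirm that the resulting relation is genuinely that of a $\Z/(n+1)$-action (so that $n+1$ iterations close up to the identity). Carrying out this bookkeeping shows that the cyclic axiom is exactly the generator-invariance $t\cdot\widetilde m_n=\widetilde m_n$, which completes the proof.
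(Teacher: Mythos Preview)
Your proposal is correct and follows essentially the same route as the paper: both arguments identify $m_n$ with a tensor in $A[-1]^{\otimes(n+1)}$ via the non-degenerate form and then observe that the cyclic-symmetry axiom for $\langle\,,\,\rangle$ is precisely the statement that this tensor is fixed by the generating cyclic permutation. Your version is in fact more explicit about the sign bookkeeping than the paper's, which simply says the claim is ``seen to be the case by expressing $\langle -,-\rangle$ as an element in $A_*\otimes A_*$ and writing the conditions for a cyclic $A_\infty$ algebra in terms of elements in the tensor algebra.''
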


\begin{proof}

Let $m_n= \sum x_1 \otimes \cdots \otimes x_{n+1} \in A[-1]^{\otimes n+1}$.  It suffices to show that $x_1 \otimes \cdots \otimes x_{n+1} = x_2 \otimes \cdots \otimes x_{n+1} \otimes x_1.$  This is seen to be the case by expressing $\langle -,-\rangle$ as an element in $A_* \otimes A_*$ and writing the conditions for a cyclic $A_\infty$ algebra in terms of elements in the tensor algebra.

\end{proof}

Viewing the maps $\{m_n\}$ as elements in the tensor and using the Koszul sign rule, one can determine the sign $(-1)^N$ found in the definition of a cyclic $A_\infty$ algebra.  We define a cyclic $A_\infty$ coalgebra viewing $c_n$ as cyclically invariant elements in the tensor product.

\begin{defn}
$(C_*, \{c_n\}, \langle -,- \rangle)$ is a \dfn{cyclic $A_\infty$ coalgebra} if
\begin{enumerate}
\item{$C_*$ is finite dimensional}
\item{$(C_*, \{c_n\} )$ is an $A_\infty$ coalgebra,}
\item{$\langle , \rangle$ is a non-degenerate bilinear form,}
\item{the maps $c_n$  when identified as elements $C_*^{\otimes n+1}$  using the bilinear form, are cyclically invariant. }
\end{enumerate}
\end{defn}

The condition that $C_*$ is finite dimensional implies that  $\langle , \rangle $ defines an isomorphism between $C_*$ and its dual $C^*$.  A cyclic $C_\infty$ coalgebra is defined in the obvious way.  Given a cyclic $C_\infty$ coalgebra $C_*$, the bilinear form and maps $\{c_n\}$ can be used to define a $C_\infty$ algebra $\{m_n:C_*[-1]^{\otimes n} \rightarrow C_*[-1]\}$.  So $C_* \otimes H_*$ has an $A_\infty$ algebra structure given by combining the $C_\infty$ algebra on $C_*$ with the strict algebra structure on $H_*$.  Does the twisting cochain $\tau:C_* \rightarrow H_*$ define a twisted $A_\infty$ algebra on $C_* \otimes H_*$?  We show that it does and unlike in the previous cases, we do not need to twist the higher maps.  In Theorem \ref{thm:dga}, we prove the case when $C_*$ is a strict cyclic coalgebra.  Also, note that since bracketing is always a derivation, whether by a primitive element or not, we do not require $Im(\tau)\subset Prim(H_*)$. If $Im(\tau) \subset Prim(H_*)$ and $H_*$ is a dg Hopf algebra, and not just a dg bialgebra, then the bracket action agrees with another action, which we call the conjugation action.  We use this action in Theorem \ref{thm:loopproduct}.

\begin{thm} \label{thm:loopproductbracket}
Let $C_*$ be a cyclic $C_\infty$ coalgebra,  $H_*$ be a dg bialgebra, and $\tau:C_* \rightarrow H_*$ be a twisting cochain.   The maps $\{\partial_\tau, m_2, m_3, \cdots \}$ defined using the bracket action in $H_*$ give $C_* \otimes_\tau H_*$ the structure of an $A_\infty$ algebra.
\end{thm}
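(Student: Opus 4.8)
The plan is to exhibit the claimed structure as a twisting of an untwisted tensor $A_\infty$ algebra and to verify the $A_\infty$ relations by the same diagrammatic Maurer--Cartan bookkeeping used for Theorem \ref{thm:twistedcoalgebra}, with the coderivation property of the bracket action replaced throughout by its derivation property. First I would record the untwisted structure. Because $C_*$ is a cyclic $C_\infty$ coalgebra, the bilinear form identifies $\{c_n\}$ with a $C_\infty$ algebra $\{m_n^C : C_*[-1]^{\otimes n} \rightarrow C_*[-1]\}$ on $C_*$; combining this with the associative multiplication of $H_*$ gives, dually to the untwisted $A_\infty$ coalgebra on $C_* \otimes H_*$, an $A_\infty$ algebra $\{m_n^\otimes\}$ whose operations multiply the $H_*$-factors and apply $m_n^C$ to the $C_*$-factors. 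The structure to be verified keeps $m_n = m_n^\otimes$ for $n \ge 2$ and replaces the untwisted differential $m_1^\otimes = \partial_C \otimes 1 + 1 \otimes \partial_H$ by $\partial_\tau$, each of whose twisting terms applies some $c_k$ to the $C_*$-factor, iterated-brackets the resulting values of $\tau$, and acts on the $H_*$-factor by the bracket action.

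Next I would sort the $A_\infty$ relation of arity $n$ by how the twisting enters. The arity-one relation is exactly $\partial_\tau^2 = 0$, which is the computation already carried out in the proofs of Theorems \ref{thm:twistedcoalgebra} and \ref{thm:conjugationcoalgebrabracket} (Figures \ref{figure:twistedc1^2}--\ref{figure:MCequations}); inspecting that computation shows it uses only the $C_\infty$ relations on $C_*$, the Maurer--Cartan equation for $\tau$, and the fact that $[a,-]$ is a derivation of $H_*$. In particular no primitivity is used, which is exactly why the hypothesis $Im(\tau) \subset Prim(H_*)$ may be dropped here. For $n \ge 2$, write $\partial_\tau = d + t$ with $d$ the tensor differential and $t$ the sum of the twisting terms. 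The relation then splits into a purely untwisted part, built from $d$ and the $m_n^\otimes$, which vanishes by the untwisted $A_\infty$ relations, and a part in which a single $\partial_\tau$ appears; the latter carries all of the new content.

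The heart of the argument is this part linear in $\partial_\tau$, which asserts a generalized Leibniz identity $t \circ m_n^\otimes = \sum_i \pm\, m_n^\otimes(Id^{\otimes i} \otimes t \otimes Id^{\otimes n-1-i})$ up to terms that reassemble into the Maurer--Cartan equation. On the $H_*$-factor this is immediate: $[a,-]$ is a derivation of the multiplication for every $a \in H_*$, primitive or not, so the bracket action distributes over the multiplied $H_*$-factors precisely as a Leibniz rule requires, and this is the reason the higher $m_n$ need no twisting. On the $C_*$-factor one needs to transport the comultiplications $c_k$ appearing in $t$ past the multiplications $m_n^C$ appearing in $m_n^\otimes$; this is supplied by the cyclic structure, which in the strict case is the Frobenius relation between $c_2$ and $m_2^C$ and in general is the package of higher Frobenius identities encoded by cyclic invariance together with the $C_\infty$ relations. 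Diagrammatically, as in the dualized analogues of Figures \ref{figure:coderivationLHS}--\ref{figure:higherc}, applying the $C_\infty$ relations on $C_*$ collapses the matched terms and leaves copies of $\partial_H \tau + \tau \partial_C + \tau \cdot \tau + \cdots$, which vanish because $\tau$ is a twisting cochain.

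I expect the main obstacle to be exactly this last compatibility: showing that the untwisted higher multiplications suffice. Concretely, one must check that pushing a comultiplication $c_k$ through a multiplication $m_n^C$, using cyclic invariance and the $C_\infty$ relations, produces precisely the terms of the generalized Leibniz identity and nothing extraneous --- the $A_\infty$ analogue of the Frobenius compatibility. Once this bookkeeping is established, the derivation property of the bracket action settles the $H_*$-side with no further hypotheses and the Maurer--Cartan equation disposes of the residual terms, completing the verification of all the $A_\infty$ relations.
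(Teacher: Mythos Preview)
Your overall strategy is right --- reduce to the untwisted $A_\infty$ algebra and verify that the twisted part of $\partial_\tau$ is compatible with each $m_n$ --- and you have correctly located both ingredients: the derivation property of the bracket action on the $H_*$-side and the cyclic structure on the $C_*$-side. But the $C_*$-side mechanism is simpler than you anticipate, and your expectation that residual Maurer--Cartan terms appear for $n \ge 2$ is off the mark.

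The paper's argument for $n \ge 2$ runs as follows. After using that $[a,-]$ is a derivation of the multiplication in $H_*$ to split the $H_*$-side of $t \circ m_n$ into one term per input (Figures \ref{figure:loopderivationLHS} and \ref{figure:double}), what remains is a purely $C_*$-side identity comparing $c_k \circ m_n^C$ with $(1^{\otimes i} \otimes m_n^C \otimes 1^{\otimes j}) \circ (c_k \otimes 1^{\otimes n-1})$. This holds \emph{on the nose}: write both $c_k$ and $m_n^C$ as cyclically invariant elements of $C_*[-1]^{\otimes k+1}$ and $C_*[-1]^{\otimes n+1}$ via the bilinear form (Lemma \ref{lem:cyclicallyinvariant}); the two compositions are then the same tensor contracted along cyclically equivalent slots. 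No $C_\infty$ relation is invoked at this step, and no Maurer--Cartan residual is produced --- the twisted part of $\partial_\tau$ is a \emph{strict} derivation of each $m_n$, which is precisely why the higher multiplications need no twisting. The $C_\infty$ relations and the Maurer--Cartan equation enter only in the arity-one relation $\partial_\tau^2 = 0$.

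So the ``main obstacle'' you flag is real, but its resolution is a direct consequence of cyclic invariance rather than the coalgebra-style bookkeeping you propose. Attempting to run the diagrammatic argument of Theorem \ref{thm:twistedcoalgebra} here would mix coalgebra maps $c_k$ with algebra maps $m_n^C$, and the $C_\infty$ relations by themselves do not govern such mixed compositions; it is the Frobenius-type identity coming from cyclicity, not the $C_\infty$ relations plus Maurer--Cartan, that does the work.
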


\begin{proof}

Since $\{\partial, m_2, m_3, \cdots \}$ defines an (untwisted) $A_\infty$ algebra, it suffices to show that the twisted terms in $\partial_\tau$ all cancel.  We first show that $\partial_\tau$ is a derivation of $m_2$, \begin{eqnarray} \label{eqnarray:new}
\partial_\tau \circ m_2 = m_2 \circ (\partial_\tau \otimes 1+ 1 \otimes \partial_\tau).  
\end{eqnarray}
We refer the reader to Figures \ref{figure:loopderivationLHS} and \ref{figure:loopderivationRHS} for graphs representing the LHS and RHS of equation (\ref{eqnarray:new}).  Since the bracket action is a derivation, the diagrams in Figure \ref{figure:loopderivationLHS} are equal to the diagrams in Figure \ref{figure:double}.  We need to show that Figure \ref{figure:loopderivationRHS} is equal to Figure \ref{figure:double}.

\begin{figure}
\includegraphics[width=4in]{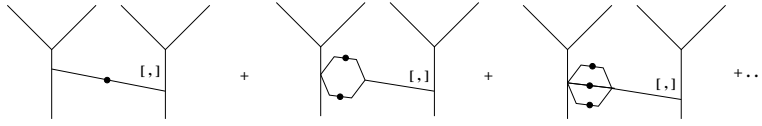}  \\

\caption{A graphical representation of $\partial_\tau \circ m_2$.  The label $[ , ]$ is to remind the reader that the bracket action is applied on $T(H_*(M)[-1])$, and not the product in $T(H_*(M)[-1])$.}
 \label{figure:loopderivationLHS}
\end{figure}

\begin{figure}
\includegraphics[width=4in]{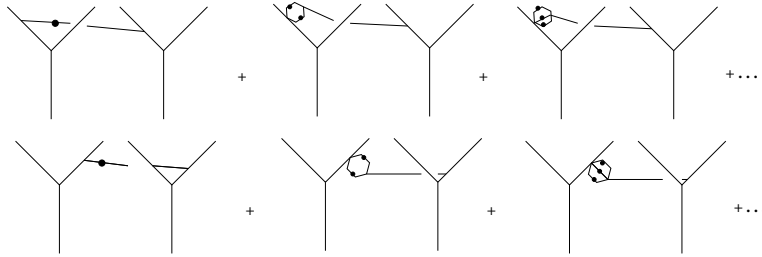}  \\

\caption{A graphical representation of $m_2\circ(\partial_\tau \otimes 1 + 1 \otimes \partial_\tau)$.}
 \label{figure:loopderivationRHS}
\end{figure}

\begin{figure}
\includegraphics[width=4in]{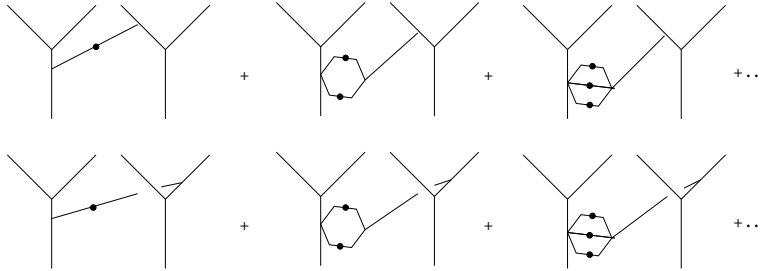}  \\

\caption{Because the bracket action is a derivation, these diagrams are equal to the one found in Figure \ref{figure:loopderivationLHS}.}
 \label{figure:double}
\end{figure}

The LHS of  equation (\ref{eqnarray:new}) has compositions $c_n \circ m_2: C_* [-1]^{\otimes 2} \rightarrow C_* [-1]^{\otimes n}$.  The maps on the RHS has compositions  $ (1^{\otimes i} \otimes m_2 \otimes 1^{\otimes j} )\circ (c_n \otimes 1):C_*[-1]^{\otimes 2} \rightarrow C_*[-1]^{\otimes n}$.  We show these two maps are equal by writing the compositions as elements in $C_*[-1]^{\otimes n+2}$ and using Lemma \ref{lem:cyclicallyinvariant}.

The map $c_n$ can be written as $\sum x_1 \otimes \cdots \otimes x_{n+1} \in C_*[-1]^{\otimes n+1}$ and $m_2$ as an  $\sum y_1 \otimes y_2 \otimes y_3 \in C_*[-1]^{\otimes 3}$.  Their composition $c_n \circ m_2$ is expressed as $$\sum \langle x_1, y_3 \rangle x_2\otimes \cdots x_n\otimes y_1 \otimes y_2 \in C_*[-1]^{\otimes 4}.$$  The composition on the RHS of the equation, $(1^{\otimes i} \otimes m_2 \otimes 1^{\otimes j}) \circ (c_2 \otimes 1)$ is described in the same way except for a different pairing $\langle x_i, y_j \rangle$.  However, since $c_n$ and $m_2$ are cyclically invariant, the compositions are equal.

The higher compatibilities for the $A_\infty$ algebra proceed in exactly the same way, with $m_2$ replaced by $m_l$.

\end{proof}

Given the $A_\infty$ algebra $C_* \otimes H_*$, we can symmetrize the maps to obtain an $L_\infty$ algebra $([C_* \otimes H_*], \{\partial_\tau, l_2, l_3, \cdots \})$.  This restricts to an $L_\infty$ algebra structure on $C_* \otimes Prim(H_*)$.

\begin{thm} \label{thm:linfty}
Let $(C_* \otimes H_*, \{\partial_\tau, m_2, m_3, \cdots \})$ be the $A_\infty$ algebra described in Theorem \ref{thm:loopproductbracket}.  Then $(C_* \otimes Prim(H_*), \{\partial_\tau, l_2, l_3, \cdots \})$, obtained by symmetrizing $\{m_n\}$, is an $L_\infty$ algebra. 
\end{thm}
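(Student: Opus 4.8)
The plan is to proceed in two steps: first invoke the symmetrization theorem of \cite{LM} to produce an $L_\infty$ algebra on all of $C_* \otimes H_*$, and then show that this $L_\infty$ structure restricts to the subspace $C_* \otimes Prim(H_*)$. Since $\partial_\tau = m_1$ is already a unary (trivially symmetric) operation, applying the symmetrization construction to $\{m_1 = \partial_\tau, m_2, m_3, \dots\}$ yields an $L_\infty$ algebra $([C_* \otimes H_*], \{\partial_\tau, l_2, l_3, \dots\})$ with $l_n(\xi_1, \dots, \xi_n) = \sum_{\sigma \in S_n} \xi(\sigma)\, m_n(\xi_{\sigma(1)}, \dots, \xi_{\sigma(n)})$. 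It then suffices to verify that each $l_n$ carries $(C_* \otimes Prim(H_*))^{\otimes n}$ into $C_* \otimes Prim(H_*)$; once the maps restrict, the $L_\infty$ relations, being identities among the $l_n$ that already hold on $C_* \otimes H_*$, automatically hold on the subspace.

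The case $n = 1$ is direct. The tensor part $\partial_C \otimes 1 + 1 \otimes \partial_H$ preserves $C_* \otimes Prim(H_*)$ because $\partial_H$ is a coalgebra map ($H_*$ is a dg bialgebra), hence commutes with $\Delta_H$ and sends primitives to primitives. Each twisted term of $\partial_\tau$ applies a bracket action whose inputs are some $h \in Prim(H_*)$ and iterated brackets $[\tau(c_{(i)}), \dots, \tau(c_{(j)})]$ of elements of $Im(\tau) \subset Prim(H_*)$; since $Prim(H_*)$ is a Lie subalgebra of $H_*$, all such iterated brackets remain primitive. Thus $\partial_\tau$ preserves $C_* \otimes Prim(H_*)$, which is exactly the differential named in the statement.

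The heart of the argument is the case $n \geq 2$, and here I would mirror the computation of Lemma \ref{lem:Hom(C,L)}. Writing $\xi_i = a_i \otimes h_i$ with $h_i \in Prim(H_*)$, the untwisted product has the form $m_n(\xi_1, \dots, \xi_n) = \pm\, m_n^C(a_1, \dots, a_n) \otimes h_1 \cdots h_n$, where $\{m_n^C\}$ is the $C_\infty$ algebra on $C_*$ dual to its cyclic $C_\infty$ coalgebra structure. To test primitivity of $l_n(\xi_1, \dots, \xi_n)$ I would apply $\Delta_H$ to the $H$-factor; since each $h_i$ is primitive and $\Delta_H$ is an algebra map, $\Delta_H(h_{\sigma(1)} \cdots h_{\sigma(n)})$ expands into the ``all-left'' and ``all-right'' terms (which together give the primitive condition) plus cross terms indexed by the proper shuffles of the $h$'s. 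Summing over $\sigma \in S_n$, the coefficient of each cross term is, up to sign, an $m_n^C$ evaluated on a shuffle of the $a_i$, and these vanish by the defining $C_\infty$ (shuffle-vanishing) property of $\{m_n^C\}$ --- precisely the mechanism used in Lemma \ref{lem:Hom(C,L)}, transported across the isomorphism $C_* \otimes H_* \cong Hom(C_*, H_*)$ furnished by the nondegenerate cyclic pairing. Hence $l_n(\xi_1, \dots, \xi_n)$ is primitive.

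The main obstacle is the bookkeeping in this last step: matching each cross term of $\Delta_H(h_{\sigma(1)} \cdots h_{\sigma(n)})$ with a shuffle summand of $m_n^C$ and tracking the Koszul and permutation signs $\xi(\sigma)$ so that the shuffle-vanishing relation applies verbatim. Because this is the dual of the cancellation already carried out in Lemma \ref{lem:Hom(C,L)}, I would either cite that lemma through the cyclic-pairing identification or reproduce its indexing argument --- grouping the permutations $\sigma$ according to the $(k, n-k)$-unshuffle that produces a given cross term --- rather than recompute from scratch. With all $l_n$ shown to preserve $C_* \otimes Prim(H_*)$, the restricted maps $\{\partial_\tau, l_2, l_3, \dots\}$ define the desired $L_\infty$ algebra.
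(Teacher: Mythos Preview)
Your proposal is correct and follows the paper's route: the paper's proof simply notes that finite-dimensionality gives $C_* \otimes H_* \cong Hom(C^*, H_*)$ with $C^*$ a $C_\infty$ coalgebra, and then invokes Lemma~\ref{lem:Hom(C,L)} --- exactly the reduction you describe. You are in fact more careful than the paper, since you handle the $n=1$ case (that $\partial_\tau$ preserves $C_* \otimes Prim(H_*)$) explicitly, whereas the paper's one-line citation of Lemma~\ref{lem:Hom(C,L)} strictly speaking addresses only the untwisted $l_n$ for $n\geq 2$. One small terminological slip: $\partial_H$ is a \emph{coderivation} of $\Delta_H$ rather than a coalgebra map, but your conclusion that it sends primitives to primitives is unaffected.
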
   

\begin{proof}
Since $C_*$ is finite dimensional, we can identify $C_* \otimes H_* \cong Hom(C^*, H_*)$, where $C^*$ is a $C_\infty$ coalgebra.  Then the statement follows from Lemma \ref{lem:Hom(C,L)}.
\end{proof}

This gives an $A_\infty$ algebra structure on  $C_* \otimes H_*$ and an $L_\infty$ algebra structure on $C_* \otimes Prim(H_*)$.  More can be said when $C_*$ is a strict unital commutative algebra.  In this situation, $C_* \otimes Prim(H_*)$ can be viewed as a Lie algebra over $C_*$.  Its universal enveloping algebra over $C_*$, denoted $U_{C_*}(C_* \otimes Prim(H_*))$ is $C_* \otimes H_*.$  Note if we take the universal enveloping algebra of $C_* \otimes Prim(H_*)$ (viewed as a Lie algebra over the ground field), we obtain $U(C_* \otimes H_*)$ which is not equal to $U_{C_*}(C_* \otimes Prim(H_*))$.  We are not aware of the corresponding notion for $C_\infty$ algebras to make the analogous statement.  This seems to be a useful notion.  We discuss the strict case in more detail in Theorem \ref{thm:strictenveloping}.

\subsection{$  C_\infty\, \text{coalg} \otimes_\tau \, \text{Hopf alg}$ as an $A_\infty$ coalgebra using conjugation action} \label{subsection:conjugationaction}

In our applications of the previous results, we would like to relate the twisted algebraic structures to the total space of some bundle.  Let $G \rightarrow P \rightarrow M$ be a principal $G$ bundle and $G \rightarrow Conj(P) \rightarrow M$ be the associated bundle with respect to the conjugation action.  Note that $H_*(M)$ is a cyclic $C_\infty$ coalgebra and $H_*(G)$ a bialgebra, and moreover, a Hopf algebra.  Then given a suitable twisting cochain $\tau:H_*(M) \rightarrow H_*(G)$, we can form the twisted algebraic structures using the methods described above.  The homology of the total space, $H_*(Conj (P))$ can be identified with linear homology of the twisted algebraic structures, that is homology the homology $H_*(M) \otimes H_*(G)$ with respect to $\partial_\tau$.  However, the argument uses Brown's theory of twisting cochains, which requires using the conjugation action.  Because the conjugation action uses the inverse operation in $G$, the algebraic setup in this situation requires $H_*$ to be a dg Hopf algebra.  We will see that when $Im(\tau)\subset Prim(H_*)$, the conjugation action agrees with the bracket action.  

Let $H_*$ be a Hopf algebra.  Denote the antipode map of $H_*$ by $s:H_* \rightarrow H_*$.  Given an element $a \in H_*$, we define the conjugation action of $a$ on $H_*$ by 
\begin{eqnarray*}
conj_a: H_* &\rightarrow & H_* \\
x &\mapsto& \sum a_{(1i)} x s(a_{(2i)}).
\end{eqnarray*}
The homology of a topological group $H_*(G)$ is a Hopf algebra.  The group acts on itself by conjugation, and so induces an action on $H_*(G)$.  The following lemma shows that this action is the same as the conjugation action of the Hopf algebra.

\begin{lem}
Let $G$ be a topological group.  The conjugation action in $G$ induces a map
\begin{eqnarray*}
H_*(G) \otimes H_*(G) &\rightarrow& H_*(G) \\
a \otimes x & \mapsto & \sum a_{(1i)} x s(a_{(2i)}).
\end{eqnarray*}
 \end{lem}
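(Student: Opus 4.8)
The plan is to realize the topological conjugation map as an explicit composite of the structure maps of $G$, and then pass to homology, where each structure map induces the corresponding Hopf algebra operation. Write $\mu:G \times G \rightarrow G$ for the multiplication, $\iota:G \rightarrow G$ for inversion, and $\Delta_G:G \rightarrow G \times G$ for the diagonal. First I would factor the conjugation map $c:G \times G \rightarrow G$, $(a,x) \mapsto axa^{-1}$, as
\[
G \times G \xrightarrow{\Delta_G \times 1} G \times G \times G \xrightarrow{1 \times T} G \times G \times G \xrightarrow{1 \times 1 \times \iota} G \times G \times G \xrightarrow{\mu^{(3)}} G,
\]
where $T$ transposes the last two coordinates and $\mu^{(3)}$ is the (associative) triple multiplication. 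Concretely, $\Delta_G \times 1$ duplicates the $a$-coordinate to give $(a,a,x)$, the transposition produces $(a,x,a)$, inversion yields $(a,x,a^{-1})$, and the triple product returns $axa^{-1}$, so the composite is indeed $c$.

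Next I would apply the homology functor. Working over $\Q$, the Künneth theorem provides natural isomorphisms $H_*(G \times G) \cong H_*(G) \otimes H_*(G)$, so each arrow above induces a map on tensor powers of $H_*(G)$. By definition of the Hopf structure, $\Delta_G$ induces the coproduct $\Delta(a) = \sum a_{(1i)} \otimes a_{(2i)}$, and $\mu$ (hence $\mu^{(3)}$) induces the algebra product. Tracing the class $a \otimes x$ through the composite therefore produces $\sum a_{(1i)} \cdot x \cdot \iota_*(a_{(2i)})$.

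It then remains to identify $\iota_*$ with the antipode $s$. This is the standard fact that inversion induces the antipode: the group axiom $\mu \circ (\iota \times 1) \circ \Delta_G = \eta \circ \varepsilon$, i.e. the constant map at the identity, passes to homology to give exactly the defining convolution identity $m \circ (s \otimes 1) \circ \Delta = \eta \circ \varepsilon$ characterizing the antipode; by uniqueness of convolution inverses in $Hom(H_*(G), H_*(G))$ we conclude $\iota_* = s$. Substituting this into the previous expression yields the asserted formula $a \otimes x \mapsto \sum a_{(1i)} \, x \, s(a_{(2i)})$.

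The only genuine bookkeeping lies in the sign conventions: the transposition $T$ together with the Künneth isomorphism introduces Koszul signs of the form $(-1)^{|a_{(2i)}||x|}$, which are suppressed in the stated formula and must be reinstated according to the chosen grading conventions. The sole conceptual point, and the step I expect to carry the weight, is the identification $\iota_* = s$; once that is in hand, everything else is a routine composition of induced maps and careful tracking of the Künneth signs.
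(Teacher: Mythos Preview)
Your proof is correct and follows essentially the same approach as the paper: both factor the topological conjugation map as a composite of the diagonal, a transposition, inversion, and multiplication, and then pass to homology where these become the coproduct, twist, antipode, and product respectively. Your version is somewhat more detailed---you explicitly justify $\iota_* = s$ via uniqueness of convolution inverses and flag the Koszul signs---whereas the paper simply asserts that inversion induces the antipode, but the argument is the same.
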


\begin{proof}
Conjugation is described by the composition
\begin{eqnarray*}
&G \times G& \overset{Diag \times 1} {\rightarrow}  G \times G \times G \overset{1 \times inv \times 1}{\rightarrow} G \times G \times G \rightarrow G \times G \times G \rightarrow  G \\
&(x,y)&  \mapsto (x,x,y) \mapsto  (x, x^{-1}, y) \mapsto  (x, y, x^{-1}) \mapsto  xyx^{-1}.
\end{eqnarray*}
The diagonal map in $G$ induces the coproduct $\Delta$ on $H_*(G)$ and the inverse map in $G$ induces the antipode $s$.  This proves the lemma.

\end{proof}

The following lemma shows that conjugation by a primitive element is a coderivation and a derivation.

\begin{lem}\label{lem:conjugation}
Let $H_*$ be a Hopf algebra.
\begin{enumerate}
 \item{Conjugation by a primitive element in a Hopf algebra is a coderivation.}
 \item{Conjugation by a primitive element in a Hopf algebra is a derivation.}
\end{enumerate}
\end{lem}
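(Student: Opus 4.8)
The plan is to reduce the conjugation action by a primitive element to the bracket action already discussed in Section \ref{subsection:multiplication}, after which both parts follow from structural facts invoked earlier. First I would compute $conj_a$ explicitly when $a$ is primitive. Since $\Delta(a) = a \otimes 1 + 1 \otimes a$, the defining formula gives
$$conj_a(x) = a\, x\, s(1) + 1 \cdot x \cdot s(a) = a\,x\,s(1) + x\, s(a).$$
Here $s(1) = 1$ because the antipode preserves the unit, and $s(a) = -a$ for primitive $a$: applying the antipode axiom $m\circ(s \otimes Id)\circ \Delta(a) = \epsilon(a)\,1$ and using $\epsilon(a) = 0$ together with $s(1)=1$ yields $s(a) + a = 0$. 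Hence $conj_a(x) = a x - x a = [a,x]$, so conjugation by a primitive element is exactly the bracket action by that element.

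For part (2), I would observe that the commutator $[a,-]$ is a derivation for any $a$, primitive or not: the Leibniz identity $[a, xy] = [a,x]\, y + (-1)^{|a||x|} x\, [a,y]$ follows by expanding $a(xy) - (xy)a$ and inserting the telescoping term $\pm\, x a y$. Equivalently, this is the module-algebra property $conj_a(xy) = \sum conj_{a_{(1i)}}(x)\, conj_{a_{(2i)}}(y)$ specialized to primitive $a$, where the mixed terms collapse because $conj_1 = Id$. Either route establishes (2).

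For part (1), I would write $[a,-] = L_a - R_a$ as the difference of left and right multiplication by $a$ and show that each summand is a coderivation when $a$ is primitive. Using that $\Delta$ is an algebra map together with $\Delta(a) = a \otimes 1 + 1 \otimes a$,
$$\Delta(L_a x) = \Delta(a)\,\Delta(x) = (L_a \otimes Id + Id \otimes L_a)\,\Delta(x),$$
and symmetrically for $R_a$; since coderivations are closed under linear combinations, their difference $[a,-]$ is again a coderivation. This is precisely the fact already used in the Quillen reformulation (that multiplication by a primitive element is a coderivation), now applied on both sides simultaneously.

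The only genuinely delicate point is the bookkeeping of Koszul signs: verifying $s(a) = -a$ and the graded Leibniz rule in part (2), and matching the signs in $\Delta(R_a x)$ against $(R_a \otimes Id + Id \otimes R_a)\Delta$ when the degree $|a|$ operator is passed across a tensor factor in part (1). Each such identity is routine, but the signs must be tracked consistently with the conventions fixed earlier, and this is where the real care is needed rather than in the structure of the argument.
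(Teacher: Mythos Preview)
Your proof is correct and follows essentially the same route as the paper: both first reduce $conj_a$ to the bracket $[a,-]=ax-xa$ via the antipode identity $s(a)=-a$ for primitive $a$, then deduce the coderivation property from the fact that (left and right) multiplication by a primitive element is a coderivation, and verify the derivation property by the telescoping computation $axy - xay + xay - xya = axy - xya$. Your treatment is slightly more explicit in invoking $s(1)=1$, $\epsilon(a)=0$, and the decomposition $[a,-]=L_a-R_a$, but this is elaboration rather than a different argument.
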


\begin{proof}
\begin{enumerate}
\item{Let $a$ be a primitive element of $H_*$.  The antipode has to satisfy $ m \circ (1 \otimes s) \circ \Delta(a) =0$, which means $s(a)= -a$.  Then $conj_a(x) = \sum a_{(1i)} x s(a_{(2i)}) = ax - xa$.   This is a coderivation because multiplying by a primitive element is a coderivation.
}
\item{Let $a$ be a primitive element.  Then
\begin{eqnarray*}
conj_a(x)\cdot y + x \cdot conj_a(y) &=& ax y - xay + xay - xya \\
&=& axy - xya \\
&=& conj_a(xy).
\end{eqnarray*}
}
\end{enumerate}
\end{proof}

\begin{thm} \label{thm:conjugationcoalgebra}
Let $C_*$ be a $C_\infty$ coalgebra, $H_*$ a dg Hopf algebra, and $\tau:C_* \rightarrow H_*$ a twisting cochain such that $Im(\tau) \subset Prim(H_*)$.  The maps $\{\partial_\tau, c_2^\tau, c_3^\tau, \cdots \}$, obtained from the conjugation action, define an $A_\infty$ coalgebra on $C_*  \otimes H_* $.
\end{thm}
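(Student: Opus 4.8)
The plan is to deduce this theorem from the bracket-action version, Theorem \ref{thm:conjugationcoalgebrabracket}, by showing that the conjugation-twisted maps and the bracket-twisted maps are literally the same family of maps under the standing hypothesis $Im(\tau) \subset Prim(H_*)$. The key point is that the action of $H_*$ on itself is inserted into the twisting terms only at the vertices where an input from the $C_*$-factor has first been routed through $\tau$; hence the element doing the acting is always of the form $\tau(c_{(i)})$, which by assumption lies in $Prim(H_*)$.

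First I would invoke Lemma \ref{lem:conjugation}: for a primitive element $a$ the antipode satisfies $s(a) = -a$, so that $conj_a(x) = \sum a_{(1i)} x\, s(a_{(2i)}) = ax - xa = [a,x]$. Thus the conjugation action of any primitive element coincides on the nose with the bracket action of that element. This is the only computation the proof really needs.

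Second, I would observe that each map $c_n^\tau$ assembled from the conjugation action follows exactly the recipe used to build the corresponding map from the bracket action, the sole difference being which action is plugged in at the $\tau$-vertices. Since the acting element at every such vertex is primitive, the two recipes return identical maps. Therefore the family $\{\partial_\tau, c_2^\tau, c_3^\tau, \ldots\}$ obtained from the conjugation action equals the family obtained from the bracket action, and Theorem \ref{thm:conjugationcoalgebrabracket} already establishes that the latter defines an $A_\infty$ coalgebra on $C_* \otimes H_*$.

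I do not expect a genuine obstacle here, since the theorem is essentially a corollary of the primitivity hypothesis collapsing conjugation to the bracket. A reader wanting a self-contained argument could instead rerun the graphical proof of Theorem \ref{thm:twistedcoalgebra} verbatim: that proof uses left multiplication only through the single fact that multiplication by a primitive element is a coderivation, and Lemma \ref{lem:conjugation}(1) supplies the analogous fact for the conjugation action. In either approach the only point demanding care is to confirm that the general conjugation action — which, unlike the bracket, involves the antipode and is a coderivation only for primitive inputs — is never applied to a non-primitive element in the construction; this is guaranteed precisely by $Im(\tau) \subset Prim(H_*)$.
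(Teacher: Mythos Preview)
Your proposal is correct and follows essentially the same route as the paper: reduce to Theorem~\ref{thm:conjugationcoalgebrabracket} by observing that for primitive $a$ one has $conj_a = [a,-]$, so under $Im(\tau)\subset Prim(H_*)$ the conjugation-twisted and bracket-twisted structure maps coincide. One small sharpening: the element that finally acts on the $H_*$-factor is not a single $\tau(c_{(i)})$ but an iterated bracket of such elements; since $Prim(H_*)$ is a Lie subalgebra this is still primitive, and your conclusion stands.
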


\begin{proof}
For $a \in Prim(H_*)$, the conjugation action, $conj_a$, and bracket action $[a, ]$ agree.  Since $Im(\tau) \subset Prim(H_*)$, the maps $\{\partial_\tau, c_2^\tau, c_3^\tau, \cdots\}$ defined using the conjugation action are equal to the maps defined using the bracket action.  The statement then follows from Theorem \ref{thm:conjugationcoalgebrabracket} 
\end{proof}

\begin{thm} \label{thm:loopproduct}
Let $C_*$ be a cyclic $C_\infty$ coalgebra,  $H_*$ be a dg Hopf algebra, and $\tau:C_* \rightarrow H_*$ be a twisting cochain with $Im(\tau) \subset Prim (H_*)$.   The maps $\{\partial_\tau, m_2, m_3, \cdots \}$ defined using the conjugation action in $H_*$ give $C_* \otimes_\tau H_*$ the structure of an $A_\infty$ algebra.
\end{thm}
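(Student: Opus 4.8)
The plan is to reduce Theorem \ref{thm:loopproduct} to the already-established Theorem \ref{thm:loopproductbracket} by showing that, under the primitivity hypothesis $Im(\tau) \subset Prim(H_*)$, the maps defined via the conjugation action literally coincide with the maps defined via the bracket action. This is the exact analogue of the argument used to deduce Theorem \ref{thm:conjugationcoalgebra} from Theorem \ref{thm:conjugationcoalgebrabracket}, so no new $A_\infty$ bookkeeping is required; the entire content is an identification of structure maps.

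First I would invoke Lemma \ref{lem:conjugation}. For a primitive element $a \in Prim(H_*)$ the antipode satisfies $s(a) = -a$, whence $conj_a(x) = \sum a_{(1i)}\, x\, s(a_{(2i)}) = ax - xa = [a,x]$. Thus conjugation by a primitive element agrees on the nose with the bracket action by that same element. Next I would observe that the only place the choice of action enters the data $\{\partial_\tau, m_2, m_3, \dots\}$ is in the twisted differential $\partial_\tau$: the higher maps $m_n$ are the untwisted $A_\infty$ algebra maps coming from the cyclic $C_\infty$ coalgebra structure on $C_*$ together with the strict algebra structure on $H_*$, and these do not reference the $H_*$-action at all. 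In $\partial_\tau$, every element of $H_*$ that acts is obtained by applying $\tau$ to factors of $C_*$ and then iterating the Lie bracket; since $Im(\tau) \subset Prim(H_*)$ and $Prim(H_*)$ is a Lie subalgebra of $H_*$ (closed under $[\,,\,]$), each such acting element is primitive. Therefore, by the previous identity, the conjugation action appearing in every twisted term of $\partial_\tau$ may be replaced term by term with the bracket action without altering the map.

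Consequently the maps $\{\partial_\tau, m_2, m_3, \dots\}$ defined using the conjugation action are equal to the maps defined using the bracket action. Since a dg Hopf algebra is in particular a dg bialgebra, the hypotheses of Theorem \ref{thm:loopproductbracket} are met, and that theorem asserts precisely that the bracket-action maps give $C_* \otimes_\tau H_*$ the structure of an $A_\infty$ algebra. The conclusion follows. The only genuine step to verify is the primitivity bookkeeping in $\partial_\tau$, and even this is minor, resting entirely on the closure of $Prim(H_*)$ under the bracket together with Lemma \ref{lem:conjugation}; this is the same mild subtlety already handled in the proof of Theorem \ref{thm:conjugationcoalgebra}.
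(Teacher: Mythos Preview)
Your proposal is correct and follows essentially the same approach as the paper: the paper's proof simply observes that since $Im(\tau)\subset Prim(H_*)$ the conjugation-action structure agrees with the bracket-action structure, and then invokes Theorem \ref{thm:loopproductbracket}. Your write-up is a more fleshed-out version of exactly this argument.
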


\begin{proof}
Since $Im(\tau) \subset Prim(H_*)$, the twisted $A_\infty$ algebra structure defined using the conjugation action agrees with the twisted $A_\infty$ algebra structure defined using the bracket action..  The proof then follows from Theorem \ref{thm:loopproductbracket}. 
\end{proof}

\subsection{Addendum}

The graphical approach taken above can obscure some sign issues.  In this section, we show that $\partial_\tau$ is a differential without appealing to graphs.  We also look at the strict (non-infinity) versions of the proofs, with the idea that this will also shed some light on the constructions.

We first show that the twisted differential $\partial_\tau$ is indeed a differential, by referencing the work of Chuang and Lazarev.  In \cite{CL}, a twisted $A_\infty$ algebra is also defined, given a Maurer Cartan element.  While their construction is different on the higher maps, it agrees with the twisted differential described in this paper.  

Let $C^*$ be an $A_\infty$ algebra and $A_*$ a strict dg associative algebra.  Then $C^* \otimes A_*$ is an $A_\infty$ algebra, and a twisting cochain is an element $\tau \in C^* \otimes A_*$ satisfying the Maurer Cartan equation $$\partial_C \tau + \partial_H \tau + m_2(\tau,\tau) + m_3(\tau,\tau,\tau)+ \cdots =0. $$  The twisted differential is then 
$$\partial_\tau(x)= \partial_C (x) + \partial_H(x)+ m_2(\tau,x) + m_3(\tau,\tau,x) + \cdots. $$  

This is related to our construction as follows.  Let $C_*$ be an $A_\infty$ coalgebra, $A_*$ a strict dga, and $\tau:C_* \rightarrow A_*$.  We are only looking to define a differential, which is why we do not require a $C_*$ coalgebra and a dg Hopf algebra.  Then the $A_\infty$ algebra $C^*$ used above is the linear dual of the $A_\infty$ coalgebra.  The twisting cochain $\tau:C_* \rightarrow A_*$ can be viewed as an element in $C^* \otimes A_*$ satisfying the Maurer Cartan equation.  The complex $C_* \otimes A_*$ is the $A_*$-dual of $C^* \otimes A_*$.  The two definitions of the twisted differentials can then be related in this way.

\begin{lem}\label{lem:nograph}
Let $C^*$ be an $A_\infty$ algebra, $A_*$ a differential graded algebra, and $\tau\in C^* \otimes A_*$ a twisting cochain.  Then $\partial_\tau^2=0$.
\end{lem}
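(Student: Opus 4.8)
The plan is to prove $\partial_\tau^2=0$ by a direct termwise computation with the structure maps $\{m_n\}$ of the $A_\infty$ algebra $V:=C^*\otimes A_*$, using the Maurer--Cartan equation to annihilate exactly the terms that fail to reassemble into $\partial_\tau^2$. Writing $m_1=\partial_C+\partial_H$, the twisted differential is $\partial_\tau(x)=\sum_{n\ge 0} m_{n+1}(\tau^{\otimes n},x)$, and the Maurer--Cartan equation for $\tau$ reads $\sum_{q\ge 1} m_q(\tau^{\otimes q})=0$. Every sum below is finite once one fixes the total tensor degree, by the completed-tensor-product convention already in force, so the manipulations are legitimate.

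Composing gives $\partial_\tau^2(x)=\sum_{m,n\ge 0} m_{m+1}\bigl(\tau^{\otimes m},\, m_{n+1}(\tau^{\otimes n},x)\bigr)$. I would fix the total number $N=m+n$ of inserted copies of $\tau$ and compare, for each $N$, with the single $A_\infty$ relation applied to the string $(\tau^{\otimes N},x)\in V^{\otimes(N+1)}$. That relation is a signed sum of terms $m_p\circ(1^{\otimes j}\otimes m_q\otimes 1^{\otimes(N+1-j-q)})$, and these split into two families according to whether the inner operation $m_q$ reaches the last slot $x$. In the first family $m_q$ acts on the terminal block $(\tau^{\otimes(q-1)},x)$; summed over all $N$ these reproduce $\partial_\tau^2(x)$ exactly, under $n=q-1$ and $m=N-q+1$. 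In the second family $m_q$ acts on a block of $\tau$'s only, yielding a factor $m_q(\tau^{\otimes q})$ inserted inside some $m_p$ between copies of $\tau$ and the final $x$. Holding the outer shape fixed and summing over the block length $q$ produces the factor $\sum_{q\ge 1} m_q(\tau^{\otimes q})=0$, so the entire second family vanishes. Since each $A_\infty$ relation is identically zero, summing over $N$ gives $0=\partial_\tau^2(x)$.

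The only delicate point --- and the reason for arguing outside the graphical calculus --- is the signs: factoring $\sum_q m_q(\tau^{\otimes q})$ out of the second family requires that the Koszul signs picked up by sliding $m_q$ past the copies of $\tau$ to its left be independent of $q$, and that the first family carry the same signs as the defining expansion of $\partial_\tau^2$. I expect this to be the main obstacle. The cleanest way to settle it is to work in the suspended convention $V[-1]$, in which $\tau$ has degree $0$ (so it commutes past everything with no sign) and the $m_n$ are the Taylor coefficients of a single degree $-1$ coderivation $D$ with $D^2=0$; the two identities above then become instances of $D^2=0$ together with the vanishing of the group-like element $\sum_n\tau^{\otimes n}$ under the relevant (co)differential, and all signs are supplied uniformly by the coderivation formalism.

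As a consistency check, this matches the twisted differential of Chuang--Lazarev \cite{CL}, for whom $\partial_\tau^2=0$ is part of the general construction, and it agrees with the interpretation of $\partial_\tau$ as the twist by the Maurer--Cartan element $\tau$ of the regular $A_\infty$-module of $V$ over itself, where square-zero of the one-sided insertion is precisely the module-twisting identity.
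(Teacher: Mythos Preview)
Your argument is correct and is essentially the same approach the paper takes: expand $\partial_\tau^2$, invoke the $A_\infty$ relations on the string $(\tau^{\otimes N},x)$, and observe that the terms not reconstituting $\partial_\tau^2$ assemble into insertions of the Maurer--Cartan element $\sum_q m_q(\tau^{\otimes q})$, which vanishes. The paper's own proof is in fact briefer than yours---it cites this as a special case of Chuang--Lazarev \cite[Theorem~2.6~(2)a]{CL} and only sketches the mechanism---so your explicit splitting into the two families and your discussion of how the suspension convention neutralises the signs supply detail that the paper leaves to the reader or to \cite{CL}.
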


\begin{proof}
This is a special case of Theorem $2.6$ $(2)a$ in \cite{CL}.  

We  write out some terms in $\partial_\tau^2(x)$.  The elements $\tau, x \in C_* \otimes A_*$ can be written as $\tau = \sum \tau_C \otimes \tau_A$ and $x= \sum x_C \otimes x_A.$  Then 
\begin{eqnarray*}
\partial_\tau(x) &=& (\partial_C x_C) \otimes x_A + (-1)^{|x_C|} x_C \otimes (\partial_A x_A) + m_2(\tau_C , x_C) \otimes \tau_A \cdot x_A  \\
&\,& + m_3(\tau_C, \tau_C, x_C) \otimes \tau_A \cdot \tau_A \cdot x_A + \cdots,
\end{eqnarray*}
  where we dropped the summation for ease of notation.  Applying $\partial_\tau$ a second time yields compositions of the $A_\infty$ algebra maps $\{m_n\}$.  Using the relations for an $A_\infty$ algebra and strict dg algebra, we obtain terms involving the Maurer Cartan equation for $\tau$.  The argument is similar to the one used to prove Theorem \ref{thm:twistedcoalgebra}.  
\end{proof}

Theorem \ref{thm:loopproduct} asserted the existence of a twisted $A_\infty$ algebra on the tensor product.  We review some definitions and then discuss the strict case of the theorem. 

\begin{defn}
A Frobenius algebra structure on $V$ consists of a commutative multiplication and a non-degenerate inner product such that $$\langle a, bc\rangle = \langle ab,c \rangle. $$  Note that a Frobenius algebra is a cyclic $C_\infty$ algebra with $m_n=0$ for $n>2$.
\end{defn}

Using the non-degenerate inner product of a Frobenius algebra, one can turn the multiplication into a comultiplication.  The multiplication and comultiplication satisfy a certain compatibility, which brings us to the notion of what some authors refer to as an open Frobenius algebra \cite{CEG}.

\begin{defn}
An open Frobenius algebra structure on $V$ consists of a commutative multiplication and a cocommutative comultiplication such that the comultiplication is a map of bimodules.  That is, $$\Delta (ab) = \sum a_{(1i)} \otimes a_{(2i)}b = \sum ab_{(1i)} \otimes b_{(2i)}. $$
\end{defn}
Abrams, \cite{A}, proved that unital Frobenius algebras and unital, counital open Frobenius algebras are equivalent.  

\begin{thm}\label{thm:dga}
Let $C_*$ be a dg Frobenius algebra and $H_*$ a dg bialgebra.  Let $\tau:C_* \rightarrow H_*$ be a twisting cochain such that $Im(\tau) \subset Prim(H_*)$.  Then $(C_* \otimes H_*, \partial_\tau) $ is a differential graded algebra.
\end{thm}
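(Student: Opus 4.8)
The plan is to recognize Theorem \ref{thm:dga} as the strict specialization of Theorem \ref{thm:loopproduct}: a dg Frobenius algebra is precisely a cyclic $C_\infty$ coalgebra with $c_n = 0$ for $n > 2$, so the higher multiplications $m_{n>2}$ vanish and the only structure maps on $C_* \otimes H_*$ are the product $m_C \otimes m_H$ (with Koszul signs, $m_C$ the commutative Frobenius product and $m_H$ the bialgebra product) and the differential $\partial_\tau$. Proving that this is a dg algebra amounts to two things: that $\partial_\tau^2 = 0$, and that $\partial_\tau$ is a derivation of the product. The first is immediate from Lemma \ref{lem:nograph} (equivalently from Brown's theorem), so no new work is needed there.

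For the derivation property, write $\partial_\tau = (\partial_C \otimes 1 + 1 \otimes \partial_H) + D_\tau$, where the twisted term is $D_\tau(c \otimes h) = \sum c_{(1i)} \otimes conj_{\tau(c_{(2i)})}(h)$, built from the Frobenius coproduct $\Delta_C(c) = \sum c_{(1i)} \otimes c_{(2i)}$ and the conjugation action. The tensor part is a derivation of $m_C \otimes m_H$ because $\partial_C$ and $\partial_H$ are each derivations of their respective products, so everything reduces to checking that $D_\tau$ satisfies the Leibniz rule on a product $(a \otimes g)(b \otimes h) = \pm\, ab \otimes gh$.

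The key computation is to apply $D_\tau$ to $ab \otimes gh$, giving $\sum (ab)_{(1i)} \otimes conj_{\tau((ab)_{(2i)})}(gh)$. Since $\tau((ab)_{(2i)})$ lies in $Prim(H_*)$, Lemma \ref{lem:conjugation}(2) lets us expand the conjugation over the product $gh$ as a derivation, producing two summands. These are matched against $D_\tau(a \otimes g)(b \otimes h)$ and $(a \otimes g)D_\tau(b \otimes h)$ using the open Frobenius bimodule relation $\Delta(ab) = \sum a_{(1i)} \otimes a_{(2i)} b = \sum ab_{(1i)} \otimes b_{(2i)}$ (available since, by Abrams' equivalence, a Frobenius algebra is an open Frobenius algebra) together with the cocommutativity of $\Delta_C$. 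Concretely, the summand with $h$ on the right is matched to $D_\tau(a \otimes g)(b \otimes h)$ using $\Delta(ab) = \sum a_{(1i)} \otimes a_{(2i)} b$ together with cocommutativity (so that $a$ becomes the active factor), while the summand with $g$ on the left is matched to $(a \otimes g)D_\tau(b \otimes h)$ using $\Delta(ab) = \sum ab_{(1i)} \otimes b_{(2i)}$ (so that $b$ becomes the active factor).

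The main obstacle is that this last matching is exactly the strict shadow of the cyclic-invariance argument in the proof of Theorem \ref{thm:loopproductbracket}, where $c_n \circ m_2$ is compared with $(1 \otimes m_2 \otimes 1) \circ (c_n \otimes 1)$ via Lemma \ref{lem:cyclicallyinvariant}; the one genuinely delicate point is the bookkeeping of Koszul signs, which the graphical arguments elsewhere in the paper suppress. I would therefore carry out the computation purely in terms of elements and degrees, verifying that the sign produced by commuting $conj_{\tau(c_{(2i)})}$ past the $H_*$-factor agrees with the sign in the product on $C_* \otimes H_*$, rather than relying on a diagrammatic shortcut.
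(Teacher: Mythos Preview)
Your proposal is correct and follows essentially the same route as the paper: both reduce to showing the twisted term is a derivation, invoke Lemma~\ref{lem:conjugation}(2) to expand $conj_{\tau((ab)_{(2i)})}$ over the product $gh$, and then match against the Leibniz terms via the open Frobenius bimodule identity $\Delta(ab)=\sum a_{(1i)}\otimes a_{(2i)}b=\sum ab_{(1i)}\otimes b_{(2i)}$ (citing Abrams), with the cyclic-invariance reformulation offered as an alternative. One small caveat: Lemma~\ref{lem:nograph} and Brown's theorem are stated for the left-multiplication twist, not the bracket/conjugation action used here, so for $\partial_\tau^2=0$ you should instead appeal to Theorem~\ref{thm:conjugationcoalgebrabracket} or specialize Theorem~\ref{thm:loopproductbracket} directly.
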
 

\begin{proof}
To prove the theorem, we need to show that the twisted term is a derivation.  Let $a \otimes b, c \otimes d \in C_* \otimes H_*$, and let $conj_{b}:H_* \rightarrow H_*$ be the conjugation action by $b \in H_*$.  Then we need to show that   
\begin{eqnarray*}
(ac)_{(1i)} \otimes conj_{\tau(ac)_{(2i)}} bd = a_{(1i)} c \otimes (conj_{\tau(a_{(2i)})} b) d + a c_{(1i)} \otimes b (conj_{\tau(c_{(2i)})} d).
\end{eqnarray*}
Since $Im(\tau) \subset Prim(H_*)$ and conjugating by a primitive element is a derivation, the LHS of the equation is 
\begin{eqnarray*}
(ac)_{(1i)} \otimes conj_{\tau(ac)_{(2i)}} bd = (ac)_{(1i)} \otimes (conj_{\tau(ac)_{(2i)}} b) d + ac_{(1i)} \otimes b (conj_{\tau(ac)_{(2i)}} d). 
\end{eqnarray*}
We need to show that $(ac)_{(1i)} \otimes (ac)_{2i} = a_{(1i)} \otimes a_{(2i)}c = ac_{(1i)} \otimes c_{(2i)}. $

Note that this is the condition that the coproduct is a map of bimodules, i.e., an open Frobenius algebra.  If we use the result that Frobenius algebras and open Frobenius algebras are equivalent, we are done.

We use another argument which follows the proof of Theorem \ref{thm:loopproduct}.  Using the non-degenerate inner product, we express the coproduct $\Delta$ as an element in $C_*^{\otimes 3}$.  The multiplication $m_2: C_* \otimes C_* \rightarrow C_*$ is obtained by dualizing the coproduct $C^* \otimes C^* \rightarrow C^*$ and using the isomorphism between $C^*$ and $C_*$.  So $m_2$ is represented by the same element in $C_*^{\otimes 3}.$  Write this element as  $m_2=\Delta = \sum x_{(1i)} \otimes x_{(2i)} \otimes x_{(3i)} \in C_*^{\otimes 3}$.

We need to show that certain compositions of $\Delta$ and $m_2$ are equal.  In writing the compositions of $\Delta$ and $m_2$, we use the subscript  $i$ to represent $m_2$ ($x_{(1i)} \otimes x_{(2i)} \otimes x_{(3i)}$) and the subscript $j$ to represent $\Delta$.  Then compositions are then given by 
\begin{eqnarray*}
\Delta \circ m_2 &=& \sum_{i,j} \langle x_{(3i)}, x_{(1j)} \rangle x_{(1i)} \otimes x_{(2i)} \otimes x_{(2j)} \otimes x_{(3j)} \\ 
(m_2 \otimes 1) \circ (\Delta \otimes 1) &=& \sum_{i,j} \langle x_{(2j)} , x_{(1i)} \rangle x_{(1j)} \otimes x_{(3j)} \otimes x_{(2i)} \otimes x_{(3i)} \\
(m_2 \otimes 1) \circ (1 \otimes \Delta) &=& \sum_{i,j} \langle x_{(3j)} , x_{(2i)} \rangle x_{(1j)} \otimes x_{(2j)} \otimes x_{(1i)} \otimes x_{(3i)}.
\end{eqnarray*}
Since $m_2= \Delta$ are cyclically invariant, we get the necessary equalities.

\end{proof}

In our construction of a twisted $A_\infty$ algebra structure on $C_* \otimes H_*$, we used a cyclic $C_\infty$ coalgebra. A cyclic $C_\infty$ algebra is the homotopy version of a Frobenius algebra.  It should be possible to define a twisted $A_\infty$ algebra using the homotopy version of an open Frobenius algebra.  The Koszul Duality theory for dioperads, described in \cite{G}, and for properads, described in \cite{V}, provides a definition for such an object.  The dioperad  describing Lie bialgebras, denoted $BiLie$, and the dioperad describing open Frobenius algebras, denoted $BiLie^!$, are Koszul dual (\cite{G} Corollary $5.10$).  So a resolution for $BiLie^!$ is obtained by taking the cobar dual of $BiLie$, denoted $D(BiLie)$, and an open $Frob_\infty$ algebra structure on $V$ is a map of differential graded dioperads $D(BiLie) \rightarrow End(V),$ where $End(V)$ is the endomorphism dioperad.  

The cohomology of a Poincare Duality space is a cyclic $C_\infty$ algebra.  An open manifold is not a Poincare Duality space, but its cohomology is an open Frobenius algebra.  The constructions using cyclic $C_\infty$ algebra would define string topology operations for Poincare Duality spaces, and the constructions using open $Frob_\infty$ algebras would define string topology operations for open manfiolds.   

Theorem \ref{thm:linfty} stated that the $L_\infty$ algebra structure on $C_* \otimes H_*$ restricts to $C_* \otimes Prim(H_*).$  In the strict case, more can be said about the relation between the associative algebra $C_* \otimes H_*$ and the Lie algebra $C_* \otimes Prim(H_*)$.  Let $U_{C_*}(C_* \otimes Prim(H_*))$ be the universal enveloping algebra of $C_* \otimes Prim(H_*)$ viewed as a Lie algebra over $C_*$.  Recall, if $A_*$ is an associative algebra, then $[A_*]$ is the Lie algebra obtained by symmetrizing the mulitplication.  

\begin{thm}\label{thm:strictenveloping}
The Lie bracket on $[C_* \otimes H_*]$ restricts to $C_* \otimes Prim(H_*)$.  Moreover, if $C_*$ is unital, $U_{C_*}(C_* \otimes Prim(H_*)) =C_* \otimes H_*$. 
\end{thm}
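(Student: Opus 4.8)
The plan is to handle the two assertions in turn: first a direct computation showing the commutator bracket of the associative algebra $C_* \otimes H_*$ preserves the subspace $C_* \otimes Prim(H_*)$ (this uses only graded commutativity of $C_*$), and then an identification of the enveloping algebra via the base-change property of $U$ together with the standing hypothesis $H_* = U(Prim(H_*))$ recorded in Section $2$ (this additionally uses unitality).

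For the first claim I would take $a \otimes h$ and $c \otimes k$ with $h, k \in Prim(H_*)$ and expand the graded commutator in the tensor-product algebra. Using the product rule $(a \otimes h)(c \otimes k) = (-1)^{|h||c|}\, ac \otimes hk$ together with the graded commutativity $ca = (-1)^{|a||c|} ac$ of $C_*$, all the signs coming from the $C_*$-factors telescope and one is left with
\[
[a \otimes h,\, c \otimes k] = (-1)^{|h||c|}\, ac \otimes [h,k],
\]
where $[h,k] = hk - (-1)^{|h||k|} kh$ is the graded commutator in $H_*$. The graded commutator of two primitive elements is again primitive: since $\Delta$ is an algebra map, $\Delta([h,k]) = [h,k]\otimes 1 + 1 \otimes [h,k]$, the cross terms $h \otimes k$ and $k \otimes h$ cancelling exactly. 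Hence the bracket lands in $C_* \otimes Prim(H_*)$, and the displayed formula simultaneously exhibits the bracket as $C_*$-bilinear, so that $C_* \otimes Prim(H_*)$ is a Lie algebra over the commutative ring $C_*$.

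For the second claim I would invoke the hypothesis that $Prim(H_*)$ is a $\Q$-Lie algebra with $U_\Q(Prim(H_*)) = H_*$, and recognize $C_* \otimes Prim(H_*)$ as the extension of scalars of $Prim(H_*)$ along the unital commutative $\Q$-algebra $C_*$. The assertion $U_{C_*}(C_* \otimes Prim(H_*)) = C_* \otimes H_*$ is then the instance $U_R(R \otimes_\Q \mathfrak{g}) \cong R \otimes_\Q U_\Q(\mathfrak{g})$ of the general base-change identity, with $R = C_*$ and $\mathfrak{g} = Prim(H_*)$. I would prove this by matching universal properties: for an $R$-algebra $A$, an $R$-algebra map $R \otimes_\Q U_\Q(\mathfrak{g}) \to A$ is the same datum as a $\Q$-algebra map $U_\Q(\mathfrak{g}) \to A$, hence as a $\Q$-Lie map $\mathfrak{g} \to [A]$, hence as an $R$-Lie map $R \otimes_\Q \mathfrak{g} \to [A]$; these bijections are natural in $A$ and so identify $R \otimes_\Q U_\Q(\mathfrak{g})$ with $U_R(R \otimes_\Q \mathfrak{g})$. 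Unitality of $C_*$ is precisely what lets $C_*$ act as scalars and embeds the generators $1 \otimes Prim(H_*)$, so that the generators-and-relations presentation of $U_{C_*}$ reproduces $C_* \otimes H_*$.

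The main obstacle is not the combinatorics of part one, which is routine sign bookkeeping, but making the base-change isomorphism of part two fully rigorous in the graded setting: one must check the universal-property matching is compatible with the internal grading and Koszul signs, and — should one prefer an explicit argument — that the Poincar\'e--Birkhoff--Witt filtration argument producing $U_\Q(Prim(H_*)) = H_*$ descends to a PBW isomorphism over the graded commutative base ring $C_*$. Since the ground field is $\Q$ (characteristic zero) and $C_*$ is commutative, PBW over $C_*$ holds, and the resulting isomorphism is the identity on $1 \otimes Prim(H_*)$ and on the scalars $C_*$, which pins it down uniquely.
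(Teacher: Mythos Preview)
Your first part is essentially the paper's argument with the Koszul signs made explicit: both compute the commutator directly and use that $Prim(H_*)$ is closed under the bracket in $[H_*]$. For the second part you take a different, more conceptual route. The paper argues concretely: using unitality of $C_*$, it rewrites an arbitrary generator $(c_1 \otimes h_1) \otimes_{C_*} \cdots \otimes_{C_*} (c_n \otimes h_n)$ in the tensor algebra $T_{C_*}(C_* \otimes Prim(H_*))$ as $(c_1 \cdots c_n \otimes h_1) \otimes_{C_*} (1 \otimes h_2) \otimes_{C_*} \cdots \otimes_{C_*} (1 \otimes h_n)$, and then reads off from the generators-and-relations description of the enveloping algebra that the quotient is $C_* \otimes U_\Q(Prim(H_*)) = C_* \otimes H_*$. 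You instead recognize the statement as an instance of the base-change identity $U_R(R \otimes_\Q \mathfrak{g}) \cong R \otimes_\Q U_\Q(\mathfrak{g})$ and verify it by matching universal properties. The paper's approach is elementary and makes the role of unitality completely transparent (it is exactly what lets one insert the $1$'s); your approach situates the result as a general categorical fact, which is tidier and perhaps more suggestive for the $C_\infty$ generalization the paper alludes to just before this theorem. Either route is complete; note that your PBW remarks are not actually needed once the universal-property matching is in place.
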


\begin{proof}
We first show that the Lie bracket on $[C_* \otimes H_*]$ fixes $C_* \otimes Prim(H_*)$.  This is a simple computation
\begin{eqnarray*}
[a_1 \otimes b_1, a_2 \otimes b_2] &=& a_1 a_2 \otimes b_1 b_2 - a_2 a_1 \otimes b_2 b_1 \\
&=& a_1 a_2 \otimes (b_1 b_2 - b_2 b_1) \\
&=& a_1 a_2 \otimes [b_1, b_2], 
\end{eqnarray*}
where the bracket is in $[H_*]$.  Since $Prim(H_*)$ is a Lie subalgebra of $[H_*]$, this proves the claim.

For the second part, suppose $C_*$ is unital.  Then an element in $U_{C_*}(C_* \otimes Prim(H_*))$ can be re-written
$$(c_1 \otimes h_1) \otimes_{C_*} \cdots \otimes_{C_*} (c_n \otimes h_n) =( c_1 \cdots c_n \otimes h_1) \otimes_{C_*} (1 \otimes h_2) \otimes_{C_*} \cdots \otimes_{C_*} (1 \otimes h_n).  $$  The claim then follows from the construction of the universal enveloping algebra as a quotient of the tensor algebra.

\end{proof}

\section{Application to Spaces}\label{section:stringtopologyoperations}

To describe string topology operations, we start with the path space fibration $\Omega_b(M) \rightarrow P_b(M) \rightarrow M$.  The based loop space $\Omega_b(M)$ is homotopy equivalent to a topological group, so we view $\Omega_b(X)$ as a topological group and the path space fibration as a principal $\Omega_b(M)$ bundle.  The group acts on itself by conjugation and the associated bundle with respect to this bundle, which we refer to as the conjugate bundle, is a model for the free loop space.

\begin{lem}
The conjugate bundle $\Omega_b(M) \rightarrow Conj (P_b(M)) \rightarrow M$ is equivalent to the free loop space bundle $\Omega_b(M) \rightarrow LM \rightarrow M.$
\end{lem}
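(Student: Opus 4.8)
The plan is to exhibit an explicit fiber-preserving homeomorphism (or at least a fiberwise homotopy equivalence over $M$) between the conjugate bundle $Conj(P_b(M))$ and the free loop space bundle $LM$. Recall that the path space $P_b(M)$ consists of paths $\gamma:[0,1]\to M$ with $\gamma(0)=b$, with projection $\pi(\gamma)=\gamma(1)$, and that $\Omega_b(M)$ acts on $P_b(M)$ on the right by concatenation. The associated conjugate bundle is the quotient $P_b(M)\times_{\Omega_b(M)}\Omega_b(M)$, where $\Omega_b(M)$ acts on itself by conjugation. So a point of $Conj(P_b(M))$ is an equivalence class $[\gamma,\ell]$ with $\gamma\in P_b(M)$, $\ell\in\Omega_b(M)$, under the relation $[\gamma\cdot\alpha,\ell]\sim[\gamma,\alpha\ell\alpha^{-1}]$.

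First I would write down the candidate map $\Phi:Conj(P_b(M))\to LM$. Given a representative $(\gamma,\ell)$, I send it to the loop obtained by traversing $\gamma$ from $b$ to $\gamma(1)$, then the loop $\gamma\cdot\ell\cdot\gamma^{-1}$ (conjugating the based loop $\ell$ by the path $\gamma$), producing a free loop based at $\gamma(1)=\pi(\gamma)$. Concretely, $\Phi[\gamma,\ell]$ is the concatenation $\gamma * \ell * \bar\gamma$ reparametrized, which is a free loop whose basepoint is $\gamma(1)$; this visibly lies over $\pi(\gamma)$ in $M$, so $\Phi$ is a map over $M$. The next step is to check that $\Phi$ is well-defined on equivalence classes: replacing $(\gamma,\ell)$ by $(\gamma\cdot\alpha,\alpha^{-1}\ell\alpha)$ (for $\alpha\in\Omega_b(M)$) should yield a loop that is homotopic rel basepoint, since $(\gamma\alpha)*(\alpha^{-1}\ell\alpha)*(\overline{\gamma\alpha})$ cancels the $\alpha,\alpha^{-1}$ pairs up to homotopy and recovers $\gamma*\ell*\bar\gamma$. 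I would then construct the inverse $\Psi:LM\to Conj(P_b(M))$ and verify $\Phi,\Psi$ are mutually inverse up to fiberwise homotopy.

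The construction of $\Psi$ is where the main subtlety lies, and I expect it to be the chief obstacle. Given a free loop $\lambda\in LM$ based at $x=\lambda(0)$, one must produce a path $\gamma$ from $b$ to $x$ in order to write $\lambda$ as a conjugate of a based loop at $b$; but there is no canonical such path. The standard resolution is to fix, via a connection or a choice of basepoint-connecting paths (using that $M$ is connected, or passing to a suitable model where such a section exists), a continuous choice $x\mapsto\gamma_x$ over contractible pieces, and then check that the resulting class $[\gamma_x,\ \overline{\gamma_x}*\lambda*\gamma_x]$ in $Conj(P_b(M))$ is independent of the choice because different choices of $\gamma_x$ differ by an element of $\Omega_b(M)$, which is precisely quotiented out by the conjugation relation. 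This independence is exactly the point at which the conjugation action is needed rather than an arbitrary action, and it is what makes $\Psi$ well-defined globally.

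Finally I would assemble these into a fiberwise equivalence: since both spaces fiber over $M$ with fiber $\Omega_b(M)$, it suffices by the fiberwise Whitehead theorem to check that $\Phi$ restricts to a homotopy equivalence on fibers. Over the point $\pi(\gamma)=b$ (or any fixed fiber after trivializing), the map $\Phi$ restricts to the identity-up-to-conjugation on $\Omega_b(M)$, which is a homotopy equivalence of the fiber to itself. Combined with $\Phi$ being a map over $M$ inducing an isomorphism on base, this upgrades to an equivalence of bundles. The routine verifications — continuity of $\Phi$ and $\Psi$, the concatenation-reparametrization homotopies, and compatibility with the projections — I would state but not grind through, as they are standard manipulations of paths and loops.
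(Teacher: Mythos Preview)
Your map $\Phi$ and its well-definedness check are exactly what the paper does. The only substantive difference is in how you establish the equivalence: you set out to build a global inverse $\Psi$ by choosing paths $x\mapsto\gamma_x$ continuously, and you correctly flag this as the main obstacle. The paper sidesteps this entirely. It simply observes that $\Phi$ is a map of bundles over $M$ and checks that it restricts to a bijection on each fiber: given $\alpha$ in the fiber of $LM$ over $x$, pick \emph{any} path $p$ from $b$ to $x$ and note that $[p,\,p^{-1}\alpha p]$ is a preimage; the conjugation relation guarantees this class is independent of the choice of $p$, so no continuity in $x$ is needed. A standard lemma (Milnor--Stasheff, Lemma 2.3) then promotes a fiberwise bijection between fiber bundles over the same base to a homeomorphism. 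Your route works, but it manufactures a difficulty the paper avoids.

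One small point: your well-definedness argument says the two loops are only homotopic, whereas the paper writes $(pg)(g^{-1}ag)(pg)^{-1}=pap^{-1}$ as an equality. This is because the paper has already replaced $\Omega_b(M)$ by a strict topological group model (e.g.\ Moore loops), so concatenation is genuinely associative and inverses cancel on the nose. If you stay with ordinary loops you do only get a fiberwise homotopy equivalence, which is still enough for the lemma as stated.
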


\begin{proof}

The total space $Conj (P_b (M))$ is $P_b(M) \times_{\Omega_b (M)} \Omega_b(M).$  We define a bundle map from $Conj( P_b(M)) \rightarrow LM$.  Let $[p, a]$ be an element in $P_b(M) \times_{\Omega_b (M)} \Omega_b(M)$ and choose a representative $(p,a)$, where $p:[0,1] \rightarrow M$ and $a:S^1 \rightarrow M$.  Then consider the map $f:[p,a] \mapsto   p a  p^{-1}.$  This map is well defined since  a different representative will be of the form $(p  g, g^{-1} a g)$, which gets sent to
$$( p g ) (g^{-1} a g) (p  g)^{-1} = p a p^{-1}. $$

If $f$ maps fibers isomorphically onto fibers, then $f$ will be a homeomorphism (see for example \cite{MS}, Lemma $2.3$).  Let $F_x(Conj)$ be the fiber of $Conj P_b(M)$ above the point $x \in M$.  An element in the fiber is of the form $[p,a]$ where $p$ is a path from $b$ to $x$ and $a$ is a loop at $b$.  Let $\alpha \in F_x(LM)$ be an element in the fiber of the free loop space bundle.  Then letting $p$ be any path from $b$ to $x$ and $a= p^{-1} \alpha p$, then $f[p, p ^{-1}\alpha p] = \alpha$.

\end{proof}

\subsection{Power Series Connection}\label{subsection:psc}

To apply the theorems proved in Section \ref{section:algebraicsetting}, we need to construct a twisting cochain.  There are several different constructions available for this purpose.  The commutative algebra structure on $\Omega^*(M)$ defines a $C_\infty$ algebra on $H^*(M)$, (see \cite{CG} for a description of how to transfer structure).  The $C_\infty$ algebra defines a derivation of square zero on $\mathcal{L}(H_*(M)[-1])$ and the inclusion $H_*(M) \hookrightarrow  \mathcal{L}(H_*(M)[-1])$ defines a twisting cochain.  Note that the $C_\infty$ algebra on $H^*(M)$ is a minimal model for $\Omega^*(M)$.  Kadeishvilli's Minimal Model Theorem, \cite{K}, provides another construction of a twisting cochain.        

We choose to review the work of Chen \cite{C} and Hain  \cite{H2} on power series connections, which gives an equivalent construction of the minimal model for $\Omega^*(M)$ as the one described above.  A power series connection will be a twisting cochain from $H_*(M) \rightarrow \mathcal{L}(H_*(M)[-1])$ in slightly different terminology.  The equivalence of Kadeishvilli's construction and Hain's construction is described in \cite{Hue}.  The construction is explicit and self contained, which is why we have chosen to include it.

Let $M$ be a simply connected manifold.  We introduce some notation.  If $L$ is a Lie algebra, let $I^2L= [L,L]$, and for $s>2$, $I^sL=[L,I^{s-1}L]$.  Also, for $w \in \Omega^*(M)$, let $J (w) =(-1)^{|w|}w. $

Hain, \cite{H2}, defines a \emph{power series connection} to be a pair  consisting of an element $\omega \in \Omega^*(M) \otimes \mathcal{L}(H_*(M;\R)[-1])$ and derivation $\partial$ on $\mathcal{L}(M_*(X;\R)[-1] )$ , such that
\begin{enumerate}
\item{$\partial^2=0$}
\item{if $\omega \equiv \sum W_i X_i (\text{mod}\, \Omega^*(M) \otimes I^2\mathcal{L}(H_*(M)[-1]))$, then $W_i$ are closed forms whose cohomology classes form a basis for $H^*(M;\R)$, }
\item{$\partial \omega + d \omega - \frac{1}{2}[J\omega,\omega]=0.$ }
\end{enumerate}

The last condition for $\omega$ is referred to as the \emph{twisting cochain condition}.

We go through Hain's construction of a power series connection, which requires the next lemma.  The statement can be found in \cite{H1}, where a dual statement is proved.

\begin{lem}[\cite{H1}, Lemma $3.8$] \label{lem:hainslemma}
Let $L$ be a graded Lie algebra and $\partial$ be a derivation of $L$ such that $\partial(L) \subset [L,L]$.  Suppose $\omega$ is an element of $\Omega^*(M) \otimes L$  such that
\begin{enumerate}
\item{$\omega \equiv \sum W_i X_i (\text{mod}\, \Omega^*(M) \otimes I^2L)$, where $W_i$ are closed forms whose cohomology classes form a linear basis for $H^*(M)$,}
\item{$
\partial \omega + d \omega -\frac{1}{2}[J \omega, \omega] \equiv 0 (\text{mod} \, \Omega^*(M) \otimes I^nL)$,}
\end{enumerate}
Then
\begin{enumerate}
\item{$\partial^2 \equiv 0 (\text{mod} \, I^{n+1} L)$ and}
\item{$d(\partial \omega + d \omega - \frac{1}{2} [J\omega, \omega]) \equiv 0 (\text{mod}\, \Omega^*(M) \otimes I^{n+1} L ). $}
\end{enumerate}
\end{lem}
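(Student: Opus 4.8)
The plan is to treat this as the inductive step in Hain's order-by-order construction, so I will regard the congruence $\partial^2 \equiv 0 \pmod{I^{n}L}$ (which is conclusion~(1) one step earlier, and which holds at the outset because $\partial(L)\subseteq[L,L]=I^2L$ already forces $\partial^2(L)\subseteq I^3 L$) as available. The first observation is structural: since $\partial$ is a derivation with $\partial(L)\subseteq I^2 L$, it raises the lower central series filtration by one, $\partial(I^sL)\subseteq I^{s+1}L$; hence $\partial^2$ raises it by two and, being the square of an odd derivation, is again a derivation. Because $L$ is generated as a Lie algebra by the $X_i$, a derivation is determined by its values on the $X_i$, so conclusion~(1) reduces to showing $\partial^2 X_i \in I^{n+1}L$ for each $i$.

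The engine of the proof is a Bianchi identity for the curvature $F=\partial\omega+d\omega-\tfrac12[J\omega,\omega]$. I would apply the total operator $d+\partial$ to $F$ and simplify using $d^2=0$, the anticommutation $d\partial=-\partial d$, the graded Jacobi identity, and the derivation property of $d$ and $\partial$ with respect to $[\,,\,]$; the sign operator $J$ is exactly what is needed to make the graded signs match. The triple-bracket terms $[J\omega,[J\omega,\omega]]$ cancel by Jacobi, and after substituting $d\omega+\partial\omega=F+\tfrac12[J\omega,\omega]$ one is left with an identity of the schematic form
\[
\partial^2\omega = dF+\partial F+[J\omega,F],
\]
with the signs pinned down by $J$. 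This expresses $\partial^2\omega$ as the covariant derivative of the curvature.

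With this identity in hand I would run the filtration estimates. From the hypothesis $F\equiv 0 \pmod{I^nL}$ we get $\partial F\in I^{n+1}L$ and $[J\omega,F]\in[L,I^nL]\subseteq I^{n+1}L$, so $\partial^2\omega\equiv dF\pmod{\Omega^*(M)\otimes I^{n+1}L}$. Writing $\omega=\sum_i W_iX_i+\omega_{\ge2}$ with $\omega_{\ge2}\in\Omega^*(M)\otimes I^2L$, the running hypothesis $\partial^2\equiv 0\pmod{I^nL}$ gives $\partial^2\omega_{\ge2}\in\partial^2(I^2L)\subseteq I^{n+1}L$, so that $\partial^2\omega\equiv\sum_i W_i\,\partial^2 X_i\pmod{I^{n+1}L}$. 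Reducing this congruence into $\Omega^*(M)\otimes(I^nL/I^{n+1}L)$ exhibits a form whose $X_i$-coefficients are the closed forms $W_i$, and which on the other hand equals the exact form $dF$; since the classes $[W_i]$ form a basis of $H^*(M)$, the resulting cohomology class can vanish only if every coefficient $\partial^2 X_i$ already lies in $I^{n+1}L$. This gives conclusion~(1). Feeding $\partial^2\equiv 0\pmod{I^{n+1}L}$ back into the Bianchi identity makes all three terms on the right lie in $I^{n+1}L$, whence $dF\in I^{n+1}L$, which is conclusion~(2).

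The main obstacle is not the homological algebra but the sign bookkeeping: verifying that the $J$-twist produces exactly the covariant-derivative identity above, with the triple brackets cancelling, is delicate and is the step most easily mishandled in a purely diagrammatic treatment. The second point requiring care is the control of the lower-order cross terms $\partial^2\omega_{\ge2}$; this is precisely where the inductive running hypothesis $\partial^2\equiv 0\pmod{I^nL}$ enters, and without it the closed-versus-exact separation in $\mathrm{gr}^nL$ would be contaminated by non-closed contributions. A sign-clean version of exactly this computation, in dual form, is carried out in \cite{H1}.
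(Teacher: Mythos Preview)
The paper does not give its own proof of this lemma; it is merely quoted from \cite{H1} with the remark ``The statement can be found in \cite{H1}, where a dual statement is proved.'' So there is nothing in the present paper to compare your argument against.

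That said, your proposal is precisely the standard argument one finds in Hain's work: derive the Bianchi-type identity $\partial^2\omega = dF + \partial F + [J\omega,F]$ for the curvature $F$, estimate $\partial F$ and $[J\omega,F]$ into $I^{n+1}L$, strip off the higher-filtration piece of $\omega$ using the inductive hypothesis $\partial^2\equiv 0\pmod{I^nL}$ together with the derivation property of $\partial^2$, and then use that the $[W_i]$ are a basis of $H^*(M)$ to separate the closed contribution $\sum W_i\,\partial^2X_i$ from the exact contribution $dF$ in $\Omega^*(M)\otimes(I^nL/I^{n+1}L)$. Feeding conclusion (1) back into the Bianchi identity gives (2). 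Two small caveats worth making explicit: first, you are tacitly assuming that $L$ is generated as a Lie algebra by the $X_i$ (this is how ``$\partial^2X_i\in I^{n+1}L$ for all $i$'' upgrades to ``$\partial^2\equiv 0\pmod{I^{n+1}L}$''), which is true in the only case the paper uses, $L=\mathcal{L}(H_*(M)[-1])$, but is not literally among the hypotheses as stated; second, the step $\partial^2(I^2L)\subseteq I^{n+1}L$ uses not just $\partial^2(L)\subseteq I^nL$ but the derivation property of $\partial^2$ applied to brackets of generators, which you have but should say. With those two clarifications your outline is correct and matches the approach in \cite{H1}.
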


 \begin{thm}[ \cite{H2}. Theorem $2.6$ ] \label{thm:powerseriesconnection}
There exists a pair $(\omega, \partial)$ such that
\begin{enumerate}
\item{$\omega \in \Omega^*(M) \otimes \mathcal{L}(H_*(M)[-1])$,}
\item{$\partial$ is a derivation of $\mathcal{L}(H_*(M)[-1])$ of square zero}
\item{$\partial \omega + d\omega - \frac{1}{2}[J\omega, \omega]=0.$}
\end{enumerate}
\end{thm}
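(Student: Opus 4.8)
The plan is to construct $\omega$ and $\partial$ by induction on the lower central series filtration $\{I^n L\}$ of the free graded Lie algebra $L = \mathcal{L}(H_*(M)[-1])$, using Lemma \ref{lem:hainslemma} as the engine of the induction. The simple-connectedness of $M$ guarantees that the generators of $L$ sit in positive degree, so that weight-$n$ brackets raise total degree; since $\Omega^*(M)$ is bounded in degree, only finitely many weights contribute in each total degree, and the construction converges to a genuine element $\omega$ and derivation $\partial$ satisfying the three conditions exactly.

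For the base case, I would choose closed forms $W_i \in \Omega^*(M)$ whose classes form a basis of $H^*(M;\R)$, let $X_i$ be the corresponding weight-one generators of $L$, and set $\omega \equiv \sum W_i X_i \pmod{\Omega^*(M) \otimes I^2 L}$ with $\partial \equiv 0$ to leading order. Since the $W_i$ are closed and the bracket of weight-one elements lands in $I^2 L$, the twisting cochain expression $E := \partial\omega + d\omega - \tfrac12 [J\omega,\omega]$ vanishes modulo $\Omega^*(M) \otimes I^2 L$, so hypotheses (1) and (2) of Lemma \ref{lem:hainslemma} hold with $n=2$.

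For the inductive step, suppose $\omega$ and $\partial$ have been built so that $E \equiv 0 \pmod{\Omega^*(M)\otimes I^n L}$, with $\partial(L) \subset I^2 L$. By conclusion (2) of Lemma \ref{lem:hainslemma}, the leading (weight-$n$) part of $E$ is a de Rham cocycle, so it can be written as $\sum_\alpha \eta_\alpha \otimes Z_\alpha$ with the $\eta_\alpha$ closed forms and $\{Z_\alpha\}$ a basis of $I^nL/I^{n+1}L$. Decomposing each $\eta_\alpha$ into a cohomological part $\sum_i c_{\alpha i} W_i$ and an exact part $d\xi_\alpha$, I would kill the exact part by adding $\pm\sum_\alpha \xi_\alpha Z_\alpha$ to $\omega$ (which alters $d\omega$ by the required term and only perturbs the bracket and $\partial$ contributions in weight $>n$), and kill the cohomological part by adjoining the weight-$n$ component $-\sum_\alpha c_{\alpha i} Z_\alpha$ to $\partial X_i$, using that $\partial\omega \equiv \sum_i W_i\,\partial X_i$ to leading order. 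After these corrections $E \equiv 0 \pmod{\Omega^*(M)\otimes I^{n+1}L}$, and conclusion (1) of Lemma \ref{lem:hainslemma} supplies $\partial^2 \equiv 0 \pmod{I^{n+1}L}$ for free, so the inductive hypotheses are restored at level $n+1$.

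The main obstacle is verifying that the weight-$n$ obstruction is genuinely a de Rham cocycle, so that the cohomology-versus-exact splitting is available; this is exactly the content of Lemma \ref{lem:hainslemma}(2), so most of the real analytic work is packaged there. The remaining care is bookkeeping: ensuring the two corrections are independent (the $\omega$-correction touches $d\omega$, the $\partial$-correction touches $\partial\omega$, and both leave the lower-weight terms of $E$ untouched), that $\partial$ remains a derivation mapping into $I^2L$, and that the degreewise finiteness makes the limiting pair $(\omega,\partial)$ well defined. Passing to the limit then yields $E = 0$ exactly, giving condition (3), while condition (1) holds by construction and condition (2) follows since $\partial^2 \equiv 0 \pmod{I^{n}L}$ for all $n$ forces $\partial^2 = 0$.
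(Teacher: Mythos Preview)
Your proposal is correct and follows essentially the same inductive scheme as the paper's proof: both build $\omega$ and $\partial$ by induction along the lower central series filtration of $L$, invoking Lemma~\ref{lem:hainslemma}(2) to see that the weight-$n$ obstruction is $d$-closed, then splitting it into an exact part (absorbed into $\omega$) and a cohomological part (absorbed into $\partial X_i$). Your explicit remarks on convergence via simple-connectedness and on deducing $\partial^2=0$ from Lemma~\ref{lem:hainslemma}(1) are a bit more detailed than the paper's presentation, but the argument is the same.
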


\begin{proof}

The proof can be found in \cite{H2}.  But we go over it, because this construction will be referred to later on.  Let $(X_i)$ be a basis of $H_*(M)$.  Suppose $(W_i)$ are closed forms in $\Omega^*(M)$ whose cohomology classes form a basis of $H^*(M)$ dual to $(X_i)$.  We construct $\partial$ and $\omega$ inductively and simultaneously.  For ease of notation, let $L= \mathcal{L}(H_*(M)[-1])$.

The first step is to let
\begin{eqnarray*}
\omega_1 &=& \sum_i W_i X_i \\
\partial_1 X_i &=& 0 \, \text{for all} \, i.
\end{eqnarray*}
Then the Maurer Cartan equation is partially satisfied,
$$\partial_1 \omega_1 + d \omega_1 - \frac{1}{2}[J\omega_1, \omega_1] \equiv 0 \,(\text{mod}\, \Omega^*(M) \otimes I^2L). $$
Now, suppose that $\partial_r$ and $\omega_r$ for $r<s$ are defined so that
\begin{enumerate}
\item{$\partial_r$ is a derivation of $L$,}
\item{$\partial_{s-1} X_i \equiv \partial_rX_i \, (\text{mod} \, I^{r+1}L) $}
\item{$\omega_{s-1} \equiv \omega_r \, (\text{mod} \, \Omega^*(M) \otimes I^{r+1}L),$}
\item{$\partial_r \omega_r + d \omega_r - \frac{1}{2}[J\omega_r, \omega_r] \equiv 0 \, (\text{mod}\, \Omega^*(M) \otimes I^{r+1}L). $}
\end{enumerate}

We need to define $\partial_s$ and $\omega_s$ to continue the induction step.  By Lemma \ref{lem:hainslemma}, $$d\left(\partial_{s-1} \omega_{s-1} + d\omega_{s-1} - \frac{1}{2}[J\omega_{s-1}, \omega_{s-1}]\right)=0. $$  But since the cohomology classes of $(W_i)$ form a basis, we have the identity
\begin{eqnarray*}
&\,& \partial_{s-1} \omega_{s-1} + d\omega_{s-1} - \frac{1}{2} [J \omega_{s-1}, \omega_{s-1}]\\
&=& \sum_{i_1 \cdots i_s} \left( \sum_i a_i^{i_1 \cdots i_s} W_i + dW_{i_1 \cdots i_s}   \right) [X_{i_1}, [X_{i_2}, \cdots [X_{i_{s-1}}, X_{i_s}]] ].
\end{eqnarray*}

Then let
\begin{eqnarray*}
\omega_s&=& \omega_{s-1} + \sum_{i_1 \cdots i_s} W_{i_1 \cdots i_s} [X_{i_1}, [X_{i_2}, \cdots ,[X_{i_{s-1}}, X_{i_s}]]] \\
\partial_s X_i&=& \partial_{s-1} X_i + \sum_{i_1 \cdots i_s} a_i^{i_1 \cdots i_s} [X_{i_1}, [X_{i_2}, \cdots [X_{i_{s-1}}, X_{i_{s}}]]].
\end{eqnarray*}
Looking at the Maurer Cartan equation modulo $\Omega^*(M) \otimes I^{s+1}L$,
\begin{eqnarray*}
&\,& \partial_s \omega_s + d \omega_s - \frac{1}{2}[J\omega_s, \omega_s] \\
&\equiv& \partial_{s-1} \omega_{s-1} + d \omega_{s-1} - \frac{1}{2}[J\omega_{s-1},\omega_{s-1}] \\
 &\,& + \sum_i \left( \sum_{i_1 \cdots i_s} a_i^{i_1 \cdots i_s} W_i + d W_{i_1 \cdots i_s} \right) [X_{i_1}, [X_{i_2}, \cdots [X_{i_{s-1}}, X_{i_s}]]]\\
&\equiv&0.
\end{eqnarray*}
This allows us to continue our induction.  Define $\omega$ and $\partial$ by the equations
\begin{eqnarray*}
\partial X_i &\equiv & \partial_s \, (\text{mod} \, I^{s+1}L) \\
\omega &\equiv & \omega_s \, (\text{mod}\, \Omega^*(M) \otimes I^{s+1}L ).
\end{eqnarray*}
\end{proof}

It is a result of rational homotopy theory that the homology of $(\mathcal{L}(H_*(M)[-1]), \partial)$ is isomorphic to $\pi_*(M) \otimes \Q$ and the homology of $(U(\mathcal{L}(H_*(M)[-1])), \partial)$ is isomorphic to $H_*(\Omega_b(M))$ as a Hopf algebra.

The twisting cochain will be the inclusion $H_*(M) \hookrightarrow \mathcal{L}(H_*(M)[-1]).$  The power series connection defines the differential on $\mathcal{L}(H_*(M)[-1])$ to be used in the Maurer Cartan equation and the twisting cochain condition implies that the inclusion is indeed a twisting cochain.  The power series connection also has the following consequence.

\begin{thm}\label{thm:psctocoalgebramap}\cite{GLS}
The power series connection $\omega$ defines a dg coalgebra map $T(H^*(M)[1]) \rightarrow T(\Omega^*(M)[1]).$  There is map $T(\Omega^*(M) [1]) \rightarrow T(H^*(M) [1])$ such that the composition of the two maps is homotopic to the identity on $T(\Omega^*(M)[1])$ and equal to the identity on $T(H^*(M)[1])$.
\end{thm}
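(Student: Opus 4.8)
The plan is to read the power series connection $\omega$ as the data of a $C_\infty$ morphism from the minimal model $H^*(M)$ to the de Rham forms $\Omega^*(M)$, and then to pass to bar constructions, where such a morphism becomes exactly a dg coalgebra map $T(H^*(M)[1]) \to T(\Omega^*(M)[1])$. First I would set up the dictionary between the Lie-algebraic picture used by Hain and the coalgebra picture appearing in the statement. Since $M$ is a finite-dimensional manifold, $H_*(M)$ is finite-dimensional, and the graded dual of the free Lie algebra $\mathcal{L}(H_*(M)[-1])$ is the cofree Lie coalgebra on $H^*(M)[1]$. Decomposing $\omega = \sum_n \omega_n$ by bracket length, the component $\omega_n \in \Omega^*(M) \otimes \mathcal{L}^{(n)}(H_*(M)[-1])$ dualizes to a linear map $f_n \colon (H^*(M)[1])^{\otimes n} \to \Omega^*(M)[1]$. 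These $\{f_n\}$ are the Taylor coefficients of the putative $C_\infty$ morphism, and by cofreeness of $T(\Omega^*(M)[1])$ they assemble into a unique coalgebra map $F$ whose corestriction to cogenerators recovers the $f_n$.

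Second, I would translate the twisting cochain condition $\partial\omega + d\omega - \frac{1}{2}[J\omega,\omega]=0$, together with $\partial^2=0$, into the defining identities of a $C_\infty$ morphism, i.e.\ into the statement that $F$ intertwines the codifferentials. The three terms of the equation correspond respectively to the derivation $\partial$ on $\mathcal{L}(H_*(M)[-1])$ (dually, the $C_\infty$ codifferential on $T(H^*(M)[1])$), the internal de Rham differential $d$, and the quadratic bracket (dually, the commutative product on $\Omega^*(M)$, which is the only nonzero higher structure since $\Omega^*(M)$ is a strict cdga). Matching the equation term by term shows that $F$ commutes with the two bar differentials, which proves the first assertion. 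The delicate part here is the degree and sign bookkeeping between the $[-1]$-shift on the Lie side and the $[1]$-shift on the coalgebra side, and verifying that the sign twist $J(w)=(-1)^{|w|}w$ produces precisely the Koszul signs demanded by the bar differential.

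Third, for the retraction and the homotopy I would invoke the fact that $H^*(M)$, with its transferred $C_\infty$ structure, is a deformation retract of $\Omega^*(M)$. Concretely, choose linear contraction data $(i_0,p_0,h_0)$ — for instance harmonic representatives from a Riemannian metric — so that $p_0 i_0 = \mathrm{id}_{H^*}$ holds on the nose and $i_0 p_0 - \mathrm{id} = d h_0 + h_0 d$. Applying the Basic Perturbation Lemma to the bar constructions, which is exactly the machinery of \cite{GLS}, promotes $p_0$ and $h_0$ to a dg coalgebra map $G \colon T(\Omega^*(M)[1]) \to T(H^*(M)[1])$ and a coalgebra homotopy. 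Because $p_0 i_0 = \mathrm{id}$ is strict at the linear level, the composite $G \circ F$ is the identity on $T(H^*(M)[1])$, while $h_0$ assembles into a homotopy witnessing $F \circ G \simeq \mathrm{id}$ on $T(\Omega^*(M)[1])$, which is the second assertion.

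The main obstacle is the compatibility step: verifying that the coalgebra map $F$ extracted from Hain's particular $\omega$ agrees with (or is coalgebra-homotopic to) the morphism produced by the perturbation lemma from the chosen contraction, so that $G\circ F=\mathrm{id}$ genuinely holds and not merely up to homotopy. This reduces to observing that the inductive construction in Theorem \ref{thm:powerseriesconnection} can be run compatibly with the splitting $h_0$ — that is, that the forms $W_{i_1\cdots i_s}$ chosen at each stage are exactly the ones obtained by applying $h_0$ to the obstruction class appearing there. Once this alignment is arranged, the remaining arguments are formal manipulations of cofree coalgebras together with the sign bookkeeping noted above.
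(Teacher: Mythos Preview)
Your proposal is correct and follows essentially the same route as the paper: interpret $\omega$ via tensor--Hom adjunction as a Maurer--Cartan element, invoke the twisting cochain / dg coalgebra map correspondence (the paper cites \cite{GLS} here just as you do), and appeal to the deformation retraction machinery for the inverse and homotopy (the paper cites \cite{Mer}, you invoke the perturbation lemma directly). Your treatment is more detailed than the paper's, which is little more than a list of citations; in particular your discussion of the compatibility needed for $G\circ F=\mathrm{id}$ to hold on the nose is a point the paper passes over in silence.
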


\begin{proof}
The element $\omega$ defines a map $T(H^*(M)[1]) \rightarrow \Omega^*(M)$, using the adjunction between tensor and $Hom$.  The twisting cochain condition on $\omega$ implies that the map satisfies the Maurer Cartan equation.  The relations between power series connections and twisting cochains is described in [\cite{GLS}, Section $1.3$].  Using the correspondence between twisting cochains and coalgebra maps then implies that extending the map as a coalgebra respects the differentials.

The second claim about the map $T(\Omega^*(M)[1]) \rightarrow T(H^*(M)[1])$ is a consequence of the map being a deformation retraction.  This result can be found in \cite{Mer}.  

\end{proof}

\subsection{$A_\infty$ coalgebra modeling the homology of the principal path space}

With a twisting cochain $H_*(M) \rightarrow \mathcal{L}(H_*(M)[-1])$ at our disposal, we can apply the theorems of Section \ref{section:algebraicsetting} to the path space fibration and its conjugate bundle.  This gives us three structures, a twisted $A_\infty$ coalgebra on $H_*(M) \otimes T(H_*(M)[-1])$ modeling the coproduct on $H_*(P_b(M))$, a twisted $A_\infty$ coalgebra on $H_*(M) \otimes T(H_*(M)[-1])$ with the conjugation action modeling $H_*(LM)$ modeling the coproduct on $H_*(LM)$, and a twisted $A_\infty$ algebra on $H_*(M) \otimes T(H_*(M)[-1])$ modeling the loop product.

\begin{thm}
Let $M$ be a simply connected manifold, $\Omega_b(M) \rightarrow P_b(M) \rightarrow M$ be the path space fibration, and $H_*(M) \hookrightarrow \mathcal{L}(H_*(M)[-1])$ be the twisting cochain given by the inclusion.  Then $(H_*(M) \otimes T(H_*(M)[-1]), \{c_n^\tau\})$ defines an $A_\infty$ coalgebra model $H_*(P)$.
\end{thm}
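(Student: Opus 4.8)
The plan is to assemble three previously established results into the claimed model. First I would verify that the algebraic hypotheses of Theorem \ref{thm:twistedcoalgebra} are met in the topological setting at hand. The base $M$ is simply connected, so $H_*(M)$ carries the structure of a $C_\infty$ coalgebra (dual to the $C_\infty$ algebra on $H^*(M)$ transferred from $\Omega^*(M)$, as discussed in Subsection \ref{subsection:psc}). The fiber plays the role of $H_*$: by the concluding remark of Subsection \ref{subsection:psc}, the homology of $U(\mathcal{L}(H_*(M)[-1]))$ with the power-series-connection differential $\partial$ is isomorphic as a Hopf algebra to $H_*(\Omega_b(M))$, so $T(H_*(M)[-1]) = U(\mathcal{L}(H_*(M)[-1]))$ is a dg bialgebra (indeed a Hopf algebra) modeling the based loop space. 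Finally, the inclusion $\tau:H_*(M) \hookrightarrow \mathcal{L}(H_*(M)[-1])$ is a twisting cochain whose image lands in $\mathcal{L}(H_*(M)[-1]) = Prim(T(H_*(M)[-1]))$; the twisting-cochain condition is exactly the power series connection condition $\partial\omega + d\omega - \tfrac{1}{2}[J\omega,\omega]=0$ from Theorem \ref{thm:powerseriesconnection}.

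With these three inputs in place, the algebraic machine applies directly. I would invoke Theorem \ref{thm:twistedcoalgebra} with $C_* = H_*(M)$, $H_* = T(H_*(M)[-1])$, and $\tau$ the inclusion, to conclude immediately that the maps $\{c_n^\tau\}$ define an $A_\infty$ coalgebra structure on $H_*(M) \otimes T(H_*(M)[-1])$. This is the purely formal half of the argument and requires no topology beyond the identifications above.

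The remaining, and genuinely topological, step is to justify that this $A_\infty$ coalgebra is a \emph{model} for $H_*(P_b(M))$, i.e.\ that it carries the correct homotopy type and coalgebra structure. Here I would lean on Brown's theorem: the linear part of the structure is the twisted differential $\partial_\tau$, and since $C_* = H_*(M)$ is a strict dg coalgebra on the chain level (with all higher $c_n$ contributing to the $A_\infty$ refinement but not to $\partial_\tau$ alone in Brown's sense), the complex $(H_*(M) \otimes T(H_*(M)[-1]), \partial_\tau)$ is chain equivalent to $C_*(P_b(M))$ by the reformulation of Brown's Theorem $(4.2)$. Because $Im(\tau) \subset Prim$, Quillen's strengthening (the Lemma reformulating \cite{Q}) upgrades the chain equivalence to one respecting the coproduct, so the underlying dg coalgebra computes $H_*(P_b(M))$ correctly; the higher maps $\{c_n^\tau\}$ then refine this to the full $A_\infty$ coalgebra.

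The main obstacle I anticipate is precisely this last identification: making rigorous the claim that the $A_\infty$ coalgebra on homology genuinely recovers the homotopy coalgebra type of $C_*(P_b(M))$, not merely its linear homology. One must connect the formal twisting cochain $\tau$ (an inclusion into a free Lie algebra) with a geometric twisting cochain $C_*(M) \to C_*(\Omega_b M)$ realizing Brown's equivalence, and check that the power-series-connection construction of Subsection \ref{subsection:psc} produces a twisting cochain compatible with that geometric data. Theorem \ref{thm:psctocoalgebramap}, together with the quasi-isomorphism $U(\mathcal{L}(H_*(M)[-1])) \simeq C_*(\Omega_b M)$ from rational homotopy theory, supplies the bridge; I would use these to transfer Brown's and Quillen's conclusions from the geometric model to the formal one, noting that all spaces are rational so the homology coalgebra over $\Q$ is determined by the minimal model.
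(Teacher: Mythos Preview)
Your proposal is correct and follows the same route as the paper: verify that $H_*(M)$ is a $C_\infty$ coalgebra, that $T(H_*(M)[-1])$ is a dg Hopf algebra modeling $H_*(\Omega_b M)$, that the inclusion lands in the primitives, and then invoke Theorem \ref{thm:twistedcoalgebra}. The paper's own proof is two sentences and does not spell out the ``model'' justification you worry about in your last two paragraphs; your extra care there is reasonable but goes beyond what the paper records.
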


\begin{proof}

The diagonal map $M \rightarrow M \times M$ defines a $C_\infty$ coalgebra on $H_*(M)$ and $T(H_*(M)[-1])$ is a Hopf algebra model for $H_*(\Omega_b(M)).$  The theorem is then a consequence of Theorem \ref{thm:twistedcoalgebra}.

\end{proof}

\subsection{$A_\infty$ coalgebra modeling the homology of the free loop space}

This brings us to defining operations in string topology.  The tensor product $H_*(M) \otimes T(H_*(M)[-1])$ is an $A_\infty$ coalgebra given by combining the $C_\infty$ coalgebra on $H_*(M)$ and the strict associative algebra on $T(H_*(M)[-1])$.  Using our twisting cochain, we twist the $A_\infty$ coalgebra as described in Section \ref{subsection:conjugationaction}.

\begin{thm}
Let $H_*(M)$ be a simply connected manifold.  Consider the $C_\infty$ coalgebra on $H_*(M)$, the Hopf algebra on $T(H_*(M)[-1])$, and the conjugation action on $T(H_*(M)[-1])$.  The maps 
\begin{eqnarray*}
\partial_\tau:H_*(M) \otimes T(H_*(M)[-1]) &\rightarrow & H_*(M) \otimes T(H_*(M)[-1]) \\
c_n^\tau:H_*(M) \otimes T(H_*(M)[-1]) &\rightarrow &(H_*(M) \otimes T(H_*(M)[-1])^{\otimes n}
\end{eqnarray*}
define an $A_\infty$ coalgebra.  The linear homology, $(H_*(M) \otimes T(H_*(M)[-1]), \partial_{\tau})$, is the homology of the free loop space of the manifold $H_*(LM)$.
\end{thm}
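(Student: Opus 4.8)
The plan is to prove the two assertions of the theorem in turn: first that $\{\partial_\tau, c_n^\tau\}$ is an $A_\infty$ coalgebra, and then that the homology of $\partial_\tau$ is $H_*(LM)$.

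For the first assertion I would simply check that the hypotheses of Theorem \ref{thm:conjugationcoalgebra} are met and invoke it. The diagonal map $M \rightarrow M \times M$ makes $H_*(M)$ a $C_\infty$ coalgebra. The tensor algebra $T(H_*(M)[-1])$ is the universal enveloping algebra $U(\mathcal{L}(H_*(M)[-1]))$ of the free Lie algebra, hence a dg Hopf algebra once we equip it with the square-zero derivation $\partial$ produced by the power series connection of Theorem \ref{thm:powerseriesconnection}. Finally, the twisting cochain is the inclusion $\tau\colon H_*(M) \hookrightarrow \mathcal{L}(H_*(M)[-1]) \subset T(H_*(M)[-1])$ coming from the power series connection; since its image lands in the free Lie algebra, it lands in $Prim(T(H_*(M)[-1]))$. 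With $Im(\tau) \subset Prim$ verified, Theorem \ref{thm:conjugationcoalgebra} delivers the $A_\infty$ coalgebra built from the conjugation action, which is exactly $\{\partial_\tau, c_n^\tau\}$.

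For the second assertion I would reduce the computation to Brown's theorem through the conjugate bundle. The Lemma identifying the conjugate bundle shows $Conj(P_b(M)) \cong LM$ over $M$, so $H_*(Conj(P_b(M))) \cong H_*(LM)$. Brown's theorem for associated bundles, recalled in the Introduction, states that the twisted complex $(C_*(M) \otimes C_*(\Omega_b(M)), \partial_\tau)$ formed with the conjugation action of $\Omega_b(M)$ on itself computes $H_*(Conj(P_b(M)))$. The Lemma identifying the group-conjugation action on $H_*(\Omega_b(M))$ with the Hopf-algebraic conjugation $x \mapsto \sum a_{(1i)} x\, s(a_{(2i)})$ guarantees that the action used in our algebraic $\partial_\tau$ is the correct topological one. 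It therefore remains to pass from the singular-chain model $C_*(\Omega_b(M))$ to the homology-level model $T(H_*(M)[-1])$ without changing the twisted homology.

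This transfer step is the main obstacle, and I expect to handle it via the comparison furnished by Theorem \ref{thm:psctocoalgebramap} together with the rational homotopy fact, recorded above, that $(U(\mathcal{L}(H_*(M)[-1])), \partial) = (T(H_*(M)[-1]), \partial)$ is a Hopf algebra model for $H_*(\Omega_b(M))$. The point is to show that the quasi-isomorphism of Hopf algebra models intertwines Brown's chain-level twisting cochain with the inclusion $\tau$, so that the induced map of twisted tensor products is itself a quasi-isomorphism; this is a standard comparison-of-twisting-cochains argument, but the care lies in checking that the conjugation action is respected by the comparison map so that the two twisted differentials genuinely correspond. Granting this, the linear homology of $(H_*(M) \otimes T(H_*(M)[-1]), \partial_\tau)$ agrees with $H_*(Conj(P_b(M))) = H_*(LM)$, completing the proof.
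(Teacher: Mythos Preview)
Your proposal is correct and aligns with the paper. The paper's entire proof is the single line ``follows from the application of Theorem \ref{thm:twistedcoalgebra}'' (almost certainly a slip for Theorem \ref{thm:conjugationcoalgebra}, since the conjugation action is explicitly in play), so your first paragraph is exactly the intended argument. For the second assertion the paper offers nothing beyond that one line and tacitly relies on the surrounding material---the lemma $Conj(P_b(M))\cong LM$, Brown's twisted-tensor-product computation for associated bundles, and the rational model $(T(H_*(M)[-1]),\partial)$ for $H_*(\Omega_b M)$; your outline is a faithful unpacking of precisely these ingredients, and the transfer/comparison-of-twisting-cochains step you flag as the main obstacle is not spelled out in the paper either.
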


\begin{proof}

The proof follows from the application of Theorem \ref{thm:twistedcoalgebra}.

\end{proof}

\subsection{$A_\infty$ algebra modeling the homology of the free loop space}

The loop product in $H_*(LM)$, first described in \cite{CS}, is intuitively defined as combining the intersection product of $H_*(M)$ with loop concatenation in $H_*(\Omega_b(M))$.  The set-up of twisted tensor products accommodates such a description.  The tensor product $H_*(M) \otimes T(H_*(M)[-1])$ is an $A_\infty$ algebra.  The map $$m_2:(H_*(M) \otimes T(H_*(M)[-1]))^{\otimes 2} \rightarrow H_*(M) \otimes T(H_*(M)[-1])$$ is a combination of the intersection product and loop concatenation.  However, its linear homology is not $H_*(LM)$ so it does not define an operation in $H_*(LM)$.  For this we need to take the twisted differential $\partial_\tau$.  Unlike the coalgebra case, we do not need to twist the higher multiplication maps.

\begin{thm}
Let $M$ be a simply connected manifold.  Consider the cyclic $C_\infty$ coalgebra on $H_*(M)$, the Hopf algebra on $T(H_*(M)[-1])$, and the conjugation action on $T(H_*(M)[-1])$. The maps
\begin{eqnarray*}
\partial_\tau:H_*(M) \otimes T(H_*(M)[-1])& \rightarrow& H_*(M) \otimes T(H_*(M)[-1])\\
m_n:(H_*(M) \otimes T(H_*(M)[-1]))^{\otimes n} &\rightarrow& H_*(M) \otimes T(H_*(M)[-1])
\end{eqnarray*}
define an $A_\infty$ algebra on $H_*(M) \otimes T(H_*(M)[-1])$. 
\end{thm}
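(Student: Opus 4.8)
The plan is to read this statement as a direct application of Theorem \ref{thm:loopproduct}, so that the real work is only to verify that the three hypotheses of that theorem are met by the data coming from the path space fibration, and then to invoke it. First I would pin down the cyclic $C_\infty$ coalgebra structure on $H_*(M)$. Since $M$ is a simply connected closed oriented manifold, I would transfer the commutative dg algebra structure of $\Omega^*(M)$ along a homotopy retract onto $H^*(M)$ to obtain a $C_\infty$ algebra, and dualize via Poincar\'e duality to get a $C_\infty$ coalgebra on $H_*(M)$. The intersection pairing supplies the non-degenerate bilinear form required by the definition of a cyclic $C_\infty$ coalgebra preceding Theorem \ref{thm:loopproductbracket}, and I would check that the transferred maps $c_n$ are cyclically invariant with respect to it.

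Next I would identify the dg Hopf algebra. The tensor algebra $T(H_*(M)[-1])$ is free associative, hence a connected graded Hopf algebra with the shuffle coproduct; its primitives are exactly the free Lie algebra $\mathcal{L}(H_*(M)[-1])$, and connectedness guarantees an antipode. The differential is the square-zero derivation $\partial$ produced by the power series connection of Theorem \ref{thm:powerseriesconnection}, under which $T(H_*(M)[-1])$ is the dg Hopf algebra modeling $H_*(\Omega_b(M))$. I would then record the twisting cochain: the inclusion $\tau:H_*(M) \hookrightarrow \mathcal{L}(H_*(M)[-1]) \subset T(H_*(M)[-1])$ is a twisting cochain by the twisting cochain condition built into the power series connection, and by construction $Im(\tau) \subset \mathcal{L}(H_*(M)[-1]) = Prim(T(H_*(M)[-1]))$.

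With a cyclic $C_\infty$ coalgebra, a dg Hopf algebra, and a primitive-valued twisting cochain all in hand, I would apply Theorem \ref{thm:loopproduct} directly to conclude that $\{\partial_\tau, m_2, m_3, \dots\}$, with the twisted differential defined through the conjugation action in $T(H_*(M)[-1])$, gives $H_*(M) \otimes T(H_*(M)[-1])$ the structure of an $A_\infty$ algebra. Note that this is exactly the setting in which the conjugation action agrees with the bracket action (Lemma \ref{lem:conjugation}), so nothing further about the higher maps needs to be twisted.

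The hard part will be the first step: producing the cyclic $C_\infty$ coalgebra structure on $H_*(M)$ and verifying cyclic invariance of every $c_n$, since a generic homotopy transfer need not respect the Poincar\'e pairing. I expect to handle this by choosing the transfer data compatibly with duality, using the de Rham pairing $\int_M \alpha \wedge \beta$ as a cyclic inner product on $\Omega^*(M)$ together with a self-adjoint contraction (for instance a Hodge-theoretic one), and invoking the known existence of cyclic minimal models on the cohomology of Poincar\'e duality spaces, so that the transferred $c_n$ are cyclically invariant as the definition requires.
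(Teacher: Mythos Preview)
Your approach is correct and matches the paper's own proof, which is literally the single sentence ``The proof is an application of Theorem \ref{thm:loopproduct}.'' Your additional discussion of hypothesis verification (cyclic structure via Hodge-theoretic transfer, the tensor Hopf algebra, the inclusion twisting cochain landing in the free Lie algebra of primitives) is reasonable elaboration that the paper leaves implicit from the surrounding subsections.
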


\begin{proof}
The proof is an application of Theorem \ref{thm:loopproduct}.
\end{proof}

\begin{ex}\label{ex:liegroup}
Let $M=G$ be a connected Lie group and consider the path space fibration, $\Omega_b(G) \rightarrow P_b(G) \rightarrow G$.  We claim that the conjugation action of $\Omega_b(G)$ is trivial, and so there is no twisting given by the twisting cochain $H_*(G) \hookrightarrow \mathcal{L}(H_*(G)[-1])$.  Consequently, the string topology operations are given by the untwisted tensor $H_*(G) \otimes T(H_*(G)[-1])$.

To see that the conjugation action is trivial, recall that a Hopf algebra $H_*$
is commutative if the Lie bracket on $Prim(H_*)$ is
zero.  In this case, the Hopf algebra is $H_*(\Omega_b(G))$.  There is a homotopy equivalence, $\Omega_b(G) \cong \Omega_b^2(BG)$.  The Lie
bracket is the same as the Samelson bracket on
$\pi_*(\Omega_b^2(BG))$ which is equal to the Whitehead bracket on
$\pi_*(\Omega_b(BG))$.  This bracket is zero because the Whitehead
bracket is trivial on $H$-spaces.  Since the multiplication is
commutative, the conjugation action is trivial and there is no
twisting coming from a twisting cochain.  This computation agrees
with that in \cite{Hep}.  In that paper, Hepworth uses the
isomorphism between $LG $ and $ G \times \Omega_b(G)$ to determine the Batalin
Vilkovisky algebra on $H_*(\Omega_b(G))$.  Menichi, in
\cite{M}, investigates the BV structure on $H_*(\Omega_b^2(BG))
\otimes H_*(M)$, and also considers the case when $M=G$.  In that
paper, he constructs a BV algebra morphism $H_*(\Omega_b(G))
\rightarrow H_*(\Omega_b(G) \otimes H_*(M) \rightarrow H_*(LM).$

The argument that the conjugation action is trivial can be applied to any manifold $M$ that is an $H$-space.

\end{ex}

\section{Application to Principal $G$ Bundles}

We are interested in applying the results in Section \ref{section:algebraicsetting} to the case of a principal $G$ bundle $G \rightarrow P \rightarrow M$.  This will turn out to be representations of the algebraic structures on $H_*(M) \otimes_\tau T(H_*(M)[-1])$ given in the previous section.  Given a connection on a bundle $G \rightarrow P \rightarrow M$, we get a map of bundles $P_b(M) \rightarrow M$ to $P \rightarrow M$ in the following way.  Choose a basepoint above the fiber in $P \rightarrow M$, and denote it by $e \in F_b(M)$.  Then the fiber can be identified with $G$, and $e$ is identified with the identity element.  Using the lifting property for connections gives us maps
\begin{eqnarray*}
\Omega_b(M) &\rightarrow & G \\
P_b(M) &\rightarrow & P.
\end{eqnarray*}
The map $\Omega_b(M) \rightarrow G$ is often referred to as the holonomy map.  

\begin{lem}
Let $G \rightarrow P \rightarrow M$ be a principal bundle with connection and $\Omega_b(M) \rightarrow P_b(M) \rightarrow M$ be the path space fibration.  The diagram
$$
\begin{CD}
P_b(M) @> >> P \\
@VVV              @VVV\\
M @> Id>> M
\end{CD}
$$
commutes.  Furthermore, the map $P_b(M)\rightarrow P$ commutes with the $\Omega_b(M)$ action on $P_b(M)$ and the $G$ action on $P$.
\end{lem}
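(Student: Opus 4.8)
The plan is to make the map $P_b(M) \to P$ explicit through horizontal lifting and then read off both assertions from the two defining properties of horizontal lifts: their compatibility with the bundle projection and their $G$-equivariance. Write $\Phi\colon P_b(M) \to P$ for the map furnished by the connection, sending a path $p\colon [0,1]\to M$ with $p(0)=b$ to the endpoint $\tilde p(1)$ of its horizontal lift $\tilde p$ starting at the chosen basepoint $e$ in the fiber over $b$. The holonomy homomorphism $\theta\colon \Omega_b(M)\to G$ sends a loop $a$ at $b$ to the unique element $g\in G$ with $\tilde a(1)=e\cdot g$, under the identification of the fiber over $b$ with $G$ and of $e$ with the identity.

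For the commuting square I would use that a horizontal lift always covers the path it lifts, i.e.\ $\pi\circ\tilde p=p$ for the projection $\pi\colon P\to M$. Evaluating at $t=1$ gives $\pi(\Phi(p))=\pi(\tilde p(1))=p(1)$. Since the path-space fibration $P_b(M)\to M$ is precisely the endpoint map $p\mapsto p(1)$, this says exactly that $\Phi$ covers the identity of $M$, so the square commutes.

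For equivariance I would combine two facts about horizontal lifting. First, the horizontal lift of a concatenation $a\ast p$ (traverse the loop $a$ first, then $p$) starting at $e$ is the concatenation of the horizontal lift of $a$ from $e$ with the horizontal lift of $p$ starting at $\tilde a(1)=e\cdot\theta(a)$. Second, since the horizontal distribution is $G$-invariant, the horizontal lift of $p$ starting at $e\cdot g$ is the right translate by $g$ of the lift starting at $e$; in particular its endpoint is $\tilde p(1)\cdot g$. Taking $g=\theta(a)$ and comparing endpoints yields
\[
\Phi(p\cdot a)=\Phi(a\ast p)=\tilde p(1)\cdot\theta(a)=\Phi(p)\cdot\theta(a),
\]
which is exactly the statement that $\Phi$ intertwines the $\Omega_b(M)$-action on $P_b(M)$ with the $G$-action on $P$ along $\theta$.

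The only substantive geometric input is the $G$-equivariance of horizontal lifting, which follows from the $G$-invariance of the connection together with uniqueness of horizontal lifts; everything else is formal. The routine but care-demanding part, and the main thing to pin down, is bookkeeping the conventions: one must fix the direction of path concatenation, decide whether $\Omega_b(M)$ acts on the left or the right, and verify that with this choice $\theta$ is a genuine group homomorphism (holonomy is naturally an anti-homomorphism with respect to $\ast$, so the group law must be taken accordingly) so that the two actions are compatible under $\Phi$. This is the same convention that underlies the formula $pap^{-1}$ used in the free loop space lemma, and once it is fixed the displayed identity above is the full content of the claim.
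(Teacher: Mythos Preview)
Your argument is correct and is exactly the standard one the paper invokes: the paper's proof simply says the first part is the definition of lifting paths and cites \cite{KN}, Proposition~3.2, for the equivariance, which is precisely the $G$-invariance of horizontal lifting you unpack. Your careful remarks on concatenation conventions and the (anti)homomorphism nature of holonomy are apt and worth keeping, but the substance matches the paper's approach.
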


\begin{proof}
This first part is the definition of lifting paths.  See (\cite{KN}, Proposition 3.2) for the second statement.
\end{proof}

This bundle map induces a map on the conjugate bundles $$
\begin{CD}
Conj(P_b(M)) @>>> Conj(P) \\
@VVV          @VVV\\
M @>>> M.
\end{CD}
$$  An element in $Conj(P_b(M))$ is represented by an element $(p_t, \alpha) \in P_b(M) \times \Omega_b(M)$. The induced map is defined by taking a representative $(p_t, \alpha)$ and sending it by the map $$(p_t, \alpha ) \mapsto [\widetilde {p_t}(1), \widetilde \alpha] \in Conj(P)= P \times_G G.$$  A loop $\alpha \in \Omega_b(M)$ lifts to a path $\widetilde \alpha$ starting at $e \in F_b(M)$ and ending in $F_b(M)$.  This path corresponds to an element in $G$.  A path $p_t \in P_b(M)$ lifts to a path $\widetilde p_t$ in $P$ starting at $e$.  Then $p_t \mapsto \widetilde p_t(1) \in P$.

\begin{prop}
Let $G \rightarrow P \rightarrow M$ be a principal $G$ bundle with connection and $\Omega_b(M) \rightarrow P_b(M) \rightarrow M$ be the path space fibration.  The map
\begin{eqnarray*}
Conj(P_b(M)) & \rightarrow&  Conj(P) \\
(p_t, \alpha )  &\mapsto&  (\widetilde p_t(1) , \widetilde \alpha )
\end{eqnarray*}
is well defined and independent of choice of basepoint $e \in F_b(M)$.
\end{prop}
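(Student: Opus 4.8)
The plan is to reduce both assertions to the single observation that a change of representative, or of basepoint, alters the pair $(\widetilde{p_t}(1), \widetilde\alpha)$ by right-multiplying the first entry by some element $h \in G$ and simultaneously conjugating the second entry by that same $h$; since $Conj(P) = P \times_G G$ is formed by exactly this equivalence, $(q \cdot h,\, h^{-1}\beta h) \sim (q, \beta)$, the resulting class in $Conj(P)$ is unchanged. Throughout I would write $\Phi \colon P_b(M) \to P$ for the lifting map $p_t \mapsto \widetilde{p_t}(1)$ and $\mathrm{hol}\colon \Omega_b(M) \to G$ for the holonomy map $\alpha \mapsto \widetilde\alpha$, both built from the chosen basepoint $e \in F_b(M)$. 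Here $\Omega_b(M) \subset P_b(M)$ is the fiber over $b$, and $\mathrm{hol}$ is just the restriction of $\Phi$ to this fiber followed by the identification $F_b(P) \cong G$ determined by $e$.

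First I would prove well-definedness. A second representative of a class in $P_b(M) \times_{\Omega_b(M)} \Omega_b(M)$ has the form $(p_t \cdot g,\, g^{-1}\alpha g)$ for some $g \in \Omega_b(M)$. By the preceding lemma the bundle map intertwines the $\Omega_b(M)$-action on $P_b(M)$ with the $G$-action on $P$, so $\Phi(p_t \cdot g) = \Phi(p_t)\cdot \mathrm{hol}(g)$. Applying this same equivariance to a loop $\alpha \in \Omega_b(M) \subset P_b(M)$, on whose fiber the action is the group law, shows $\mathrm{hol}(\alpha g) = \mathrm{hol}(\alpha)\,\mathrm{hol}(g)$; that is, $\mathrm{hol}$ is a group homomorphism, whence $\mathrm{hol}(g^{-1}\alpha g) = \mathrm{hol}(g)^{-1}\,\mathrm{hol}(\alpha)\,\mathrm{hol}(g)$. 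Setting $h = \mathrm{hol}(g)$, the second representative maps to $(\Phi(p_t)\cdot h,\, h^{-1}\mathrm{hol}(\alpha)\,h)$, which represents the same class in $P \times_G G$ as $(\Phi(p_t), \mathrm{hol}(\alpha))$.

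Next I would treat independence of the basepoint. Since $G$ acts simply transitively on $F_b(M)$, a second basepoint is $e' = e \cdot h_0$ for a unique $h_0 \in G$. The basic equivariance of horizontal lifts, namely that the lift starting at $q \cdot h_0$ is the lift starting at $q$ translated by $h_0$, shows that the $e'$-lift of $p_t$ ends at $\Phi(p_t)\cdot h_0$, so the lifting map for $e'$ is $\Phi'(p_t) = \Phi(p_t)\cdot h_0$. The same equivariance, combined with the fact that moving the identity from $e$ to $e'$ rescales the identification $F_b(P) \cong G$ by $h_0$, gives $\mathrm{hol}'(\alpha) = h_0^{-1}\,\mathrm{hol}(\alpha)\,h_0$. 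Hence the $e'$-image $(\Phi(p_t)\cdot h_0,\, h_0^{-1}\mathrm{hol}(\alpha)\,h_0)$ again differs from the $e$-image by the relation defining $P \times_G G$, so the class in $Conj(P)$ is independent of $e$.

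The main obstacle is bookkeeping rather than conceptual: one must fix conventions for the left/right actions and for the identification $F_b(P) \cong G$ so that $\mathrm{hol}$ emerges as a genuine homomorphism (not an anti-homomorphism) and so that changing the basepoint conjugates holonomy in the direction matching the equivalence on $P \times_G G$. In particular, since $\Omega_b(M)$ is a group only up to reparametrization, some care is needed to state the homomorphism property of $\mathrm{hol}$ and the composition law for horizontal lifts of concatenated paths precisely; using Moore paths (or working up to the usual homotopies) removes this wrinkle and leaves the algebra above untouched.
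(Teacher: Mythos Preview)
Your proof is correct and follows essentially the same route as the paper: both well-definedness and basepoint-independence are reduced to the observation that the pair $(\widetilde{p_t}(1),\widetilde\alpha)$ changes by right-translation in the first factor and conjugation in the second, which is precisely the equivalence defining $P\times_G G$. Your version is simply more explicit---introducing the notation $\Phi$ and $\mathrm{hol}$, spelling out why $\mathrm{hol}$ is a homomorphism, and flagging the Moore-path technicality---whereas the paper compresses each half into a single sentence.
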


\begin{proof}
Let $\beta \in \Omega_b(M)$.  For the map to be well defined, $(\widetilde {p_t \beta} , \widetilde{\beta \alpha \beta^{-1}})$ and $(\widetilde p_1, \widetilde \alpha)$ must be in the same equivalence class in $P \times_G G$.  We see that conjugating $(\widetilde p_1, \widetilde \alpha)$ by $\widetilde \beta \in G$ is $(\widetilde {p_t \beta} , \widetilde{\beta \alpha \beta^{-1}})$.  So the map is well defined.

Choosing a different point $e' \in P_b(M)$ changes the map $P_b(M) \rightarrow P$ by the $G$ action and changes the map $\Omega_b(M) \rightarrow G$ by a conjugation.  In the conjugate bundle, the images belong to the same equivalence class.
\end{proof}

Given a bundle $G \rightarrow P \rightarrow M$, with $G$ a connected Lie group, we look to construct a twisting cochain $\tau:H_*(M) \rightarrow H_*(G)$.  Then using the methods in Section \ref{section:algebraicsetting}, we obtain various structures on $H_*(M) \otimes H_*(G)$ modeling $H_*(P)$.  The twisting cochain will be in terms of the characteristic classes of the bundle.

\begin{prop}(\cite{DP}, p. $249$)
Let $G$ be a Lie group and $R$ a ring.  The cohomology, $H^*(BG;R)$ is a polynomial $R$-algebra of finite type on generators of even degree.
\end{prop}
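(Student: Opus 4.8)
The plan is to reduce the statement to Borel's classical theorem on the rational cohomology of classifying spaces of compact connected Lie groups, which is the form relevant here since all cohomology in this paper is taken with $\Q$ coefficients. First I would use the fact that any connected Lie group $G$ admits a maximal compact subgroup $K$ with $K \hookrightarrow G$ a homotopy equivalence (so $G \cong K \times \R^n$ as a manifold). Consequently the induced map $BK \to BG$ is a homotopy equivalence and $H^*(BG;\Q) \cong H^*(BK;\Q)$, which lets me assume without loss of generality that $G$ is compact and connected.

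For compact connected $G$, let $T \subset G$ be a maximal torus of rank $r$ and let $W = N(T)/T$ be the Weyl group. The key input is Borel's restriction theorem: the map induced by the inclusion $T \hookrightarrow G$,
$$H^*(BG;\Q) \longrightarrow H^*(BT;\Q),$$
is injective with image exactly the $W$-invariants $H^*(BT;\Q)^W$. I would establish this via the Serre spectral sequence of the fibration $G/T \to BT \to BG$, using that $H^*(G/T;\Q)$ is concentrated in even degrees with total dimension $|W|$ to force degeneration, or else simply cite it as standard.

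Next I would compute the right-hand side. Since $BT \simeq (\C P^\infty)^r$, we have $H^*(BT;\Q) = \Q[x_1,\dots,x_r]$ with each $|x_i| = 2$, so these generators have even degree. The Weyl group $W$ acts on $H^2(BT;\Q) \cong \Q^r$ as a finite reflection group, and by the Chevalley--Shephard--Todd theorem the invariant subalgebra of a polynomial ring under a finite reflection group is again a polynomial algebra, on $r$ homogeneous generators whose degrees are twice the fundamental degrees of $W$. Combining these facts gives $H^*(BG;\Q) \cong \Q[y_1,\dots,y_r]$ with each $y_i$ of even degree, a polynomial algebra of finite type on even generators, as claimed.

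The main obstacle is Borel's restriction theorem itself, namely identifying the image of $H^*(BG;\Q)$ in $H^*(BT;\Q)$ precisely with the Weyl invariants. Both the injectivity and the degeneration of the relevant spectral sequence rest on the geometric fact that $H^*(G/T;\Q)$ is even-dimensional of total dimension $|W|$; once that input is in hand, the algebraic conclusion through Chevalley's theorem is immediate. I would emphasize that the reduction to $\Q$ coefficients is essential here, since for a general ring $R$ the polynomial conclusion can fail on account of torsion.
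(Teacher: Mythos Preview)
The paper does not prove this proposition; it is simply quoted from \cite{DP} as a known fact. Your outline via Borel's restriction theorem and Chevalley--Shephard--Todd is the standard argument and is correct over $\Q$, which is all the paper actually uses (recall the standing convention that all cohomology here is rational). Your final caveat is exactly right and worth emphasizing: as literally stated for an arbitrary ring $R$ the proposition is false --- for instance $H^*(BSO(3);\Z/2)=\Z/2[w_2,w_3]$ has an odd-degree polynomial generator, and $H^*(BG;\Z)$ has torsion for many compact $G$ --- so the quoted statement should be read with $R=\Q$.
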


For $H_*(BG)$, we need a separate argument.
\begin{lem}
Let $G$ be a connected Lie group.  Then $H_*(BG)$ is a free commutative algebra.
\end{lem}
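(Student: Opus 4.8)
The plan is to identify the relevant algebra structure on $H_*(BG;\Q)$ as the one dual to the Hopf algebra structure on $H^*(BG;\Q)$, and then to transport freeness across the duality $H_*(BG;\Q) \cong H^*(BG;\Q)^*$. The preceding proposition already supplies the hard cohomological input, namely that $H^*(BG;\Q)$ is a polynomial algebra of finite type on generators of even degree, so the entire content of the argument is to dualize this freeness while keeping track of the Hopf structure.

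First I would produce the Hopf algebra structure. A connected Lie group $G$ is homotopy equivalent to its maximal compact subgroup, so $BG$ has the rational homotopy type of $BK$ for $K$ compact connected, and rationally $K \simeq_\Q S^{2k_1-1} \times \cdots \times S^{2k_r-1}$. Hence $BG \simeq_\Q \prod_i K(\Q, 2k_i)$, a finite product of Eilenberg--MacLane spaces, each of which can be modeled as a commutative, associative topological group. Thus $BG$ is a homotopy-commutative, homotopy-associative $H$-space rationally, and the resulting Pontryagin product together with the diagonal coproduct make $H_*(BG;\Q)$ a connected, graded-commutative Hopf algebra of finite type, concentrated in even degrees (since its graded dual is).

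The second step is to deduce freeness. One can argue directly: $H^*(BG;\Q) = \Q[x_1,\ldots,x_r]$ with each $x_i$ even and primitive, and its graded dual is the divided power Hopf algebra on dual generators $\{x_i^{(n)}\}$ with $x_i^{(m)} x_i^{(n)} = \binom{m+n}{m} x_i^{(m+n)}$. Over $\Q$ the rescaling $y_i = x_i^{(1)}$, $x_i^{(n)} = y_i^n / n!$, turns each divided power algebra into the polynomial algebra $\Q[y_i]$, so $H_*(BG;\Q) \cong \Q[y_1,\ldots,y_r]$. Alternatively, one invokes the Borel--Milnor--Moore structure theorem: a connected graded-commutative Hopf algebra of finite type over a field of characteristic zero is a free graded-commutative algebra, which, being generated in even degrees, is polynomial. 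Either route shows $H_*(BG;\Q)$ is a free commutative algebra.

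The main obstacle is the first step, namely justifying that $H_*(BG;\Q)$ is a Hopf algebra at all: $BG$ is not a topological group when $G$ is nonabelian, so the product cannot be taken for granted and must be produced rationally via the splitting $BG \simeq_\Q \prod_i K(\Q, 2k_i)$ together with the reduction to the maximal compact subgroup. Once this rational $H$-space structure is in place, the remaining duality computation is routine and the freeness follows formally.
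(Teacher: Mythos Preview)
Your proposal is correct and follows essentially the same strategy as the paper: argue that $BG$ is rationally a product of Eilenberg--MacLane spaces, use this to endow $H_*(BG;\Q)$ with a Hopf algebra structure, and then dualize the known fact that $H^*(BG;\Q)$ is polynomial to conclude that $H_*(BG;\Q)$ is free commutative.

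The differences are purely in the technical packaging. The paper obtains the $H$-space structure by observing that the relevant Eilenberg--MacLane spaces are infinite loop spaces, and then argues commutativity of the Pontryagin product by showing the Samelson/Whitehead bracket on $\pi_*$ vanishes (since a further delooping is again an $H$-space). You instead reduce to the maximal compact subgroup, invoke Hopf's theorem to write $K \simeq_\Q \prod S^{2k_i-1}$, and model each $K(\Q,2k_i)$ as an abelian topological group, getting homotopy commutativity for free. Your route is a bit more direct here. For the freeness step, the paper simply appeals to self-duality of Hopf algebras and the fact that $H^*(BG;\Q)$ is polynomial; your divided-power computation and your alternative citation of the Milnor--Moore structure theorem make explicit what the paper leaves implicit. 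Either justification is fine, and both arguments arrive at the same conclusion by the same overall mechanism.
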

\begin{proof}
The classifying space $BG$ is rationally equivalent to a product of Eilenberg Maclane spaces.  Furthermore, since $G$ is connected, the long exact sequence in homotopy groups of $G \rightarrow EG \rightarrow BG$, implies $\pi_1(BG)=0$.  The Eilenberg Maclane spaces here are then infinite loop spaces, and so $BG$ is rationally an infinite loop space.  This means $H_*(BG)$ is a Hopf algebra, which is commutative if the Lie bracket in $Prim(H_*(BG))$ is zero.  This bracket is equivalent to the Whitehead bracket on $\pi_*(Y)$ where $\Omega_b^2(Y)= BG$.  But $Y$ is a loop space, since $BG$ is, rationally, an infinite loop space.  And the Whitehead bracket on $H$-spaces is zero.

Hopf algebras are self dual, so $H^*(BG)$ is a Hopf algebra and $H_*(BG)$ is the dual Hopf algebra.  We see that $H_*(BG)$ is also a polynomial algebra.

\end{proof}

\subsection{Constructing the twisting cochain $H_*(M) \rightarrow H_*(G)$.}

The power series connection $\omega \in \Omega^*(M) \otimes \mathcal{L}(H_*(M)[-1])$, constructed in Section \ref{subsection:psc} will be used once more.  Theorem \ref{thm:psctocoalgebramap} defines a dg coalgebra map $T(H^*(M)[1]) \rightarrow T(\Omega^*(M)[1])$, which has an inverse $T(\Omega^*(M)[1]) \rightarrow T(H^*(M)[1]).$

Since $G$ is a connected Lie group, $H^*(BG)$ is a polynomial algebra.  This allows us to define maps from $H^*(BG)$ in terms of its polynomial generators.  Let $\{p_i \in H^*(M\})$ be the characteristic classes of a bundle $G \rightarrow P \rightarrow M$.  Then there is an algebra map $H^*(BG) \rightarrow \Omega^*(M)$ defined as follows.  Let $\{P_i \in H^*(BG)\}$ be the polynomial generators which pullback to the characteristic classes $\{p_i\}$.  Then define an algebra map by $P_i \mapsto \widehat p_i$, where $\widehat p_i \in \Omega^*(M)$ is a representative for $p_i$.  Extend the map as an algebra map to all of $H^*(BG)$.  The algebra map $H^*(BG) \rightarrow \Omega^*(M)$ defines a coalgebra map $T(H^*(BG)[1]) \rightarrow T(\Omega^*(M)[1]).$

So we have a coalgebra map $T(H^*(BG)[1]) \rightarrow T(\Omega^*(M) \rightarrow T(H^*(M)[1]),$ which defines an algebra map $T(H_*(M) [-1]) \rightarrow T(H_*(BG)[-1])$.  To this algebra map, there is a corresponding twisting cochain $H_*(M) \rightarrow T(H_*(BG)[-1])$.  Since $T(H_*(BG)[-1])$ is a model for $\Omega_b(BG)$, which is homotopy equivalent to $G$, we could do our work with twisting cochains now.

To replace $T(H^*(BG)[1] )$ with $H^*(G)$ we need to find a coalgebra map $H^*(G) \rightarrow T(H^*(BG)[1])$.  Recall that $H^*(G)$ is generated by odd dimensional generators $U_i$.  To each $U_i$ there is a generator of $H^*(BG)$ one degree higher, which we denote by $P_i$.  We define
\begin{eqnarray*}
f: H^*(G) &\rightarrow& T(H^*(BG)[1]) \\
U_i &\mapsto & P_i \\
U_{i_1} U_{i_2} &\mapsto & P_{i_1} \otimes P_{i_2} + P_{i_2} \otimes P_{i_1}
\end{eqnarray*}
and extending the map as an algebra map.  So $f(U_{i_1} \cdots U_{i_j}) = P_{i_1} \doublecup \cdots \doublecup P_{i_j}$, where $\doublecup$ is the shuffle product.

\begin{lem}
The map $f:H^*(G) \rightarrow T(H^*(BG)[1])$ is a map of differential graded coalgebras.  Therefore, $f$ is a map of differential graded Hopf algebras.
\end{lem}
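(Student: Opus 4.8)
The plan is to exploit that $f$ is an algebra map by construction and that both $H^*(G)$ and $T(H^*(BG)[1])$ carry standard (differential graded) Hopf algebra structures whose algebra generators are primitive; compatibility with the coproduct can then be checked on generators alone. First I would fix the two structures. By Hopf's theorem $H^*(G)$ is an exterior algebra on the odd generators $U_i$, each primitive, so $\Delta U_i = U_i \otimes 1 + 1 \otimes U_i$, and its differential is zero. On $T(H^*(BG)[1])$ I take the shuffle product (the one for which $f$ is declared an algebra map) together with the deconcatenation coproduct and the bar differential induced by the multiplication of $H^*(BG)$; because $H^*(BG)$ is graded commutative this is a well-defined dg Hopf algebra, and every length-one word $[P_i]$ is primitive for deconcatenation. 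I would also record the sign computation that makes $f$ a legitimate algebra map on the exterior algebra: each $P_i$ lies in even degree, so the corresponding letter $[P_i]$ in $H^*(BG)[1]$ is odd, whence $[P_i]\cdot[P_i] = [P_i|P_i] - [P_i|P_i] = 0$, matching the exterior relation $U_i^2 = 0$.

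To prove $f$ is a coalgebra map I would compare the two maps $\Delta_T \circ f$ and $(f \otimes f)\circ \Delta_{H^*(G)}$ from $H^*(G)$ to $T(H^*(BG)[1])^{\otimes 2}$. In a bialgebra the coproduct is an algebra homomorphism, and $f \otimes f$ is an algebra map, so both composites are algebra maps out of $H^*(G)$. On a generator $U_i$ both send it to $[P_i]\otimes 1 + 1 \otimes [P_i]$, using primitivity of $U_i$ on one side and of $[P_i]$ on the other. Since $H^*(G)$ is generated as an algebra by the $U_i$, two algebra maps that agree on generators coincide, so $\Delta_T \circ f = (f\otimes f)\circ \Delta_{H^*(G)}$.

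For the differential, since $H^*(G)$ carries the zero differential it suffices to show $d_T \circ f = 0$. Each $[P_i]$ is a cycle for the bar differential (a single letter has no adjacent pair to multiply), and the shuffle product is a chain map on the bar construction of the commutative algebra $H^*(BG)$; hence every value $f(U_{i_1}\cdots U_{i_j})$, being the shuffle product of the cycles $[P_{i_1}],\dots,[P_{i_j}]$, is again a cycle. Thus $d_T \circ f = 0 = f \circ d_{H^*(G)}$, so $f$ is a morphism of dg bialgebras. A bialgebra morphism between Hopf algebras automatically commutes with the antipodes, which gives the final dg Hopf algebra statement.

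The main obstacle I anticipate is not the generator-wise comparison but correctly pinning down the dg Hopf algebra structure on $T(H^*(BG)[1])$ together with its Koszul signs: one must be sure that the product making $f$ an algebra map (the shuffle) is the one compatible with deconcatenation, and that the bar differential is a derivation for which the $[P_i]$ are cycles. The even-degree placement of the $P_i$ is exactly what makes the intervening signs collapse, both in $[P_i]\cdot[P_i]=0$ and in the cycle/primitivity verifications, and checking that this is consistent simultaneously across product, coproduct, and differential is the only genuinely delicate point.
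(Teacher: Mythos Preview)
Your proposal is correct and follows the same overall strategy as the paper: verify the coalgebra compatibility using primitivity of the generators, and check the chain-map condition by observing that the $[P_i]$ are cycles and that the bar differential is a derivation for the shuffle product. The one place where you differ is in the coalgebra step: the paper simply computes $(f\otimes f)\circ\Delta_{H^*(G)}$ and $\Delta_T\circ f$ explicitly on a product $U_iU_j$ and leaves the general case implicit, whereas you argue that both composites are algebra homomorphisms out of $H^*(G)$ (because the coproducts are algebra maps and $f$ is one by construction) and hence are determined by their values on the generators $U_i$. Your version is cleaner and actually complete; the paper's computation is best read as an illustration of the same principle. You also make explicit the well-definedness check $[P_i]\cdot[P_i]=0$ coming from the oddness of $[P_i]$, which the paper does not spell out.
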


\begin{proof}
The coproduct on $H^*(G)$ is given by
\begin{eqnarray*}
\Delta_{G}(U_{i_1} U_{i_2})=  U_{i_1} U_{i_2} \otimes 1 + U_{i_1} \otimes U_{i_2} + U_{i_2} \otimes U_{i_1} + 1 \otimes U_{i_1} U_{i_2},
\end{eqnarray*}
and extended so that $\Delta_G$ is an algebra map.  The coproduct on $T(H^*(BG)[1])$ is given by deconcatenation,
\begin{eqnarray*}
\Delta (P_{i_1} \otimes \cdots \otimes P_{i_k}) &=& \sum_j P_{i_1} \otimes \cdots P_{i_j} \bigotimes P_{i_{j+1}}
\otimes \cdots \otimes P_{i_k}.
\end{eqnarray*}

The following computation shows that  $f$ is a coalgebra map,
\begin{eqnarray*}
(f \otimes f) \circ \Delta(U_i U_j) &=& (f\otimes f) (U_i U_j \otimes 1 + U_i \otimes U_j + U_j \otimes U_i + 1 \otimes U_i U_j) \\
&=& (P_i \otimes P_j) \otimes 1 + (P_j \otimes P_i) \otimes 1 + P_i \otimes P_j  \\
&\,& + P_j \otimes P_i + 1 \otimes (P_i \otimes P_j) + 1 \otimes (P_j \otimes P_i) \\
&=& \Delta(P_i \otimes P_j + P_j \otimes P_i) \\
&=& \Delta f (U_i U_j).
\end{eqnarray*}

 The differential on $H^*(G)$ is zero, so for $f$ to be a chain map, $f$ must map to cocycles in $T(H^*(BG)[1])$.  We see that $\delta$ is zero on $P_i$.  Then since $f$ maps to shuffle products of $P_i$ and $\delta$ is a derivation with respect to the shuffle product, $f$ maps to cocycles.

 \end{proof}

To replace $T(H_*(BG)[-1])$ with $H_*(G)$, we take the dual of the above map to get a differential graded algebra map $T(H_*(BG)[-1])\rightarrow H_*(G)$.  So given a twisting cochain $\tau:H_*(M) \rightarrow T(H_*(BG)[-1])$, composing maps defines a twisting cochain $H_*(M) \rightarrow T(H_*(BG)[-1]) \rightarrow H_*(G)$.  Similarly, $H^*(G) \rightarrow H^*(M)$ is a twisting cochain obtained by composing the twisting cochain $T(H^*(BG)[1]) \rightarrow H^*(M)$ and the coalgebra map $H^*(G) \rightarrow T(H^*(BG)[1])$.

We summarize the construction of the twisting cochain and give a formula for it.  Let $G \rightarrow P \rightarrow M$ be a principal $G$ bundle, where $G$ is a connected Lie group and $M$ is a simply connected manifold.  Let $\{P_i\}$ be the multiplicative basis for $H^*(BG)$ where $p_i \in H^*(M)$ is the pullback of $P_i \in H^*(BG)$. The elements $P_i$ are even dimensional and correspond to an element $U_i \in H^*(G)$ such that $\{U_i\}$ form a basis for $H^*(G)$.  The following coalgebra maps are composed
\begin{enumerate}
\item{$T(\Omega^*M)[1]) \rightarrow T(H^*(M)[1])$ }
\item{$T(H^*(BG)[1]) \rightarrow T(\Omega^*(M)[1])$}
\item{$H^*(G) \rightarrow T(H^*(BG)[1])$}
\end{enumerate}
to define a coalgebra map $H^*(G) \rightarrow T(H^*(M)[1])$ which corresponds to a twisting cochain $H^*(G) \rightarrow H^*(M).$  When this process is carried out,  $\tau:H^*(G) \rightarrow H^*(M)$ is defined on generators by
 \begin{eqnarray*}
 H^*(G) &\rightarrow& H^*(M) \\
 U_i &\mapsto& p_i,
 \end{eqnarray*}
and zero on products of generators.

\begin{prop}

Consider the coalgebra structure on $H^*(G)$ given by group
multiplication and the $C_\infty$ algebra structure on $H^*(M)$
given by the cup product.  Then the map $\tau:H^*(G) \rightarrow
H^*(M)$ which on  generators is $ U_i \mapsto p_i$ and zero on
products of generators is the twisting cochain coming from the
twisting cochain $H_*(M) \hookrightarrow \mathcal{L}(H_*(M)[-1])$ given by the inclusion.

\end{prop}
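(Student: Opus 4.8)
The plan is to trace the twisting cochain through the chain of coalgebra maps that produces it and read off its corestriction. By the correspondence between twisting cochains and coalgebra maps into a tensor coalgebra (the dual of the correspondence used in the proof of Theorem \ref{thm:psctocoalgebramap}), the twisting cochain $\tau:H^*(G)\to H^*(M)$ built by the construction is exactly $\pi_1\circ\Phi$, where $\Phi=\Psi\circ T(g)\circ f$ is the composite coalgebra map $H^*(G)\to T(H^*(M)[1])$, $\pi_1:T(H^*(M)[1])\to H^*(M)[1]$ is the projection onto length-one tensors, $f$ is the coalgebra map sending $U_i\mapsto P_i$, $T(g)$ is the functorial map induced by the algebra map $g:H^*(BG)\to\Omega^*(M)$ with $g(P_i)=\widehat p_i$, and $\Psi:T(\Omega^*(M)[1])\to T(H^*(M)[1])$ is the inverse of the power series connection map of Theorem \ref{thm:psctocoalgebramap}. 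Since $\Psi$ is a map of tensor coalgebras, its composite with $\pi_1$ on a tensor of length $k$ is precisely the $k$-th corestriction component $\phi_k$; so the whole computation reduces to evaluating the $\phi_k$ on the images of the generators and of their products.

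First I would handle the generators. On $U_i$ we have $f(U_i)=P_i$ and $T(g)(P_i)=\widehat p_i$, a single length-one tensor, so $\tau(U_i)=\phi_1(\widehat p_i)$. The component $\phi_1$ is just the retraction $\Omega^*(M)\to H^*(M)$ of the underlying deformation retraction, and since $\widehat p_i$ is a closed representative of the characteristic class $p_i$, this gives $\phi_1(\widehat p_i)=p_i$. Hence $\tau(U_i)=p_i$, as asserted.

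Next I would treat a product $U_{i_1}\cdots U_{i_k}$ with $k\ge 2$. By definition $f$ sends it to the shuffle product $P_{i_1}\doublecup\cdots\doublecup P_{i_k}$, a sum of length-$k$ tensors, and the functorial map $T(g)$ carries shuffle products to shuffle products, producing $\widehat p_{i_1}\doublecup\cdots\doublecup\widehat p_{i_k}$. As this input is homogeneous of tensor length $k$, its image under $\pi_1\Psi$ is $\phi_k(\widehat p_{i_1}\doublecup\cdots\doublecup\widehat p_{i_k})$. The decisive point is that $\Psi$ is not merely an $A_\infty$ but a $C_\infty$ coalgebra map: it comes from the power series connection, whose defining element lies in the free Lie algebra $\mathcal{L}(H_*(M)[-1])$ and whose construction uses only the commutative product of $\Omega^*(M)$. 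Consequently each higher component $\phi_k$ with $k\ge 2$ annihilates the image of the shuffle product, and therefore $\tau(U_{i_1}\cdots U_{i_k})=0$.

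The main obstacle is exactly this last vanishing: establishing that the inverse power series connection map $\Psi$ is a genuine $C_\infty$ morphism, so that its higher corestriction components kill shuffles. This is where the Lie-valuedness of the input twisting cochain $H_*(M)\hookrightarrow\mathcal{L}(H_*(M)[-1])$, equivalently the cocommutativity inherited from the commutative dga $\Omega^*(M)$, must be invoked and dualized with care: the inclusion lands in the free Lie algebra rather than in the whole tensor algebra, and tracking how this restriction becomes the ``vanishing on shuffles'' property on the cochain side is the only step that is not purely formal. Everything else is a direct unwinding of the definitions of $f$, $T(g)$, and the corestriction.
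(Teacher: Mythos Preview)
Your approach is correct but genuinely different from the paper's. You trace the coalgebra composite $\Psi\circ T(g)\circ f$ and read off the corestriction, reducing the vanishing on products $U_{i_1}\cdots U_{i_k}$ to the fact that the homotopy inverse $\Psi$ of the power series connection is a $C_\infty$ morphism, so its higher components $\phi_k$ kill shuffles. The paper instead takes the formula for $\tau$ as already obtained from the preceding informal discussion and simply verifies directly that this $\tau$ satisfies the Maurer--Cartan equation: since both differentials vanish, one must show $m_n^{Hom}(\tau^{\otimes n})=0$ for all $n$; the only inputs on which $\tau^{\otimes n}$ can be nonzero are the shuffle-type summands of $\Delta^{n}(U_{i_1}\cdots U_{i_n})$, and these are annihilated because the $C_\infty$ operations $m_n$ on $H^*(M)$ vanish on shuffles. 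So both arguments hinge on a ``vanishes on shuffles'' fact, but applied to different objects: you use it for the higher components of the $C_\infty$ \emph{morphism} $\Psi$, while the paper uses it for the $C_\infty$ \emph{operations} $m_n$. Your route proves the identification with the constructed twisting cochain more literally, at the cost of having to justify that $\Psi$ is $C_\infty$ (which, as you note, follows from the Lie-valuedness of the power series connection and homotopy transfer); the paper's route is shorter and entirely self-contained once the formula is in hand, but strictly speaking only checks that $\tau$ \emph{is} a twisting cochain rather than that it \emph{equals} the one produced by the construction.
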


\begin{proof}

There are no differentials on $H^*(G)$ and $H^*(M)$, and so it suffices to show that $m_n^{Hom}(\tau^{\otimes n})=0 $ for each $n$.  For $m_n^{Hom}(\tau^{\otimes n})$ to be possibly non-zero, we need to consider the product of $n$ generators $U_{i_1} \cdots U_{i_n}.$  We look at terms in $\Delta^n(U_{i_1} \cdots U_{i_n})$ of the form $$\sum U_{i_{\sigma(1)}} \otimes \cdots \otimes U_{i_{\sigma(n) }}.$$  Then  we apply $\tau$ to each factor and apply $m_n:H^*(M)^{\otimes n} \rightarrow H^*(M)$ of the $C_\infty$ algebra.  But each $m_n$ vanishes on shuffle products, so it is zero on products of these terms.

\end{proof}

\subsection{$A_\infty$ coalgebra of $H_*(M) \otimes_\tau H_*(G)$ for a principal $G$-bundle}

We can now define the twisted $A_\infty$ coalgebra structure on $H_*(M) \otimes H_*(G).$  We use the dual of $\tau:H^*(G) \rightarrow H^*(M)$, to get a twisting cochain.  The map is also denoted $\tau$ and is defined as
\begin{eqnarray*}
\tau:H_*(M) & \rightarrow & H_*(G) \\
p_i^* &\mapsto& U_i^*,
\end{eqnarray*}
is zero on products $p_{i_1}^* \cdots p_{i_n}^*$.  Note that $U_i^* \in Prim(G)$, and $[U^*_{i_1}, U^*_{i_2}]$ is defined.  The tensor differential on $H_*(M) \otimes H_*(G)$ is zero, so $\partial_\tau$ consists
only of twisted terms.  These terms are obtained by applying $\{c_n:H_*(M)\rightarrow H_*(M)^{\otimes n}\}$, applying $\tau$ to the last $n-1$ terms, bracketing the results, and then multiplying the resulting bracket with the element in $H_*(G)$.  The higher coproducts $c_2^\tau, c_3^\tau. \cdots $ are defined in the same way.

\begin{thm}
Let $\{ p\} $ be the characteristic classes of a $G$ bundle $G \rightarrow P \rightarrow M$, where $G$ is a connected Lie group and $M$ a simply connected manifold.  The maps $\{\partial_\tau, c_2^\tau, c_3^\tau, \cdots  \}$ define an $A_\infty$ coalgebra on $H_*(M) \otimes H_*(G)$ whose linear homology is isomorphic to $H_*(P)$.

\end{thm}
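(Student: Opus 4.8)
The plan is to treat the two assertions separately: the existence of the $A_\infty$ coalgebra structure, and the identification of its linear homology with $H_*(P)$.

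The first assertion is an immediate application of Theorem \ref{thm:twistedcoalgebra}, so I would simply verify its three hypotheses from the preceding constructions. The diagonal $M \to M \times M$ equips $H_*(M)$ with a $C_\infty$ coalgebra structure; the group multiplication makes $H_*(G)$ a Hopf algebra, hence a dg bialgebra with vanishing internal differential; and the map $\tau$ determined by $p_i^* \mapsto U_i^*$ (and zero on products) is the twisting cochain produced from the characteristic classes. The condition $Im(\tau) \subset Prim(H_*(G))$ holds because each $U_i^*$ is primitive, so Theorem \ref{thm:twistedcoalgebra} applies verbatim and $\{\partial_\tau, c_2^\tau, c_3^\tau, \ldots\}$ is an $A_\infty$ coalgebra with no further work.

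For the second assertion I would first observe that the linear homology sees only $\partial_\tau = c_1^\tau$. Since the differentials on both $H_*(M)$ and $H_*(G)$ vanish, $\partial_\tau$ consists entirely of the twisted term built from the maps $c_n$, the cochain $\tau$, the Lie bracket in $Prim(H_*(G))$, and multiplication. The relation $\tau(p_i^*) = U_i^*$ identifies $\partial_\tau$ with the homology-level transgression (Koszul) differential determined by the connection, in which the characteristic class $p_i$ annihilates the exterior generator $U_i$. To match this with $H_*(P)$, I would invoke Brown's theorem (\cite{B}, Theorem $(4.2)$), which asserts that the chain-level twisted tensor product $(C_*(M) \otimes C_*(G), \partial_{\tilde\tau})$ is chain equivalent to $C_*(P)$ for a chain-level twisting cochain $\tilde\tau$ classifying the bundle. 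The homology cochain $\tau$ is the transfer of $\tilde\tau$ along the deformation retraction supplied by the power series connection of Section \ref{subsection:psc}, and this retraction (Theorem \ref{thm:psctocoalgebramap}) induces the comparison between the homology-level and chain-level twisted complexes.

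The main obstacle is precisely this last transfer step. Brown's theorem is stated at the chain level, so one must argue that replacing $C_*(M)$ and $C_*(G)$ by their homologies, together with the transferred twisting cochain $\tau$, preserves the homology of the twisted complex. The explicit comparison map is furnished by Theorem \ref{thm:psctocoalgebramap}, while the simple connectivity of $M$ is what guarantees that the associated Eilenberg--Moore spectral sequence converges and collapses, so that the homology of $(H_*(M) \otimes H_*(G), \partial_\tau)$ is genuinely $H_*(P)$ rather than merely an associated graded. Verifying this convergence and collapse, rather than the formal application of Theorem \ref{thm:twistedcoalgebra}, is where the real content of the theorem lies.
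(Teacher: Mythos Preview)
Your treatment of the first assertion matches the paper exactly: its entire proof is the single sentence ``This is an application of Theorem \ref{thm:twistedcoalgebra}.'' For the second assertion---the identification of the linear homology with $H_*(P)$---you actually go further than the paper, which offers no explicit argument for this part and leaves it implicit in the preceding construction of $\tau$ from the characteristic classes together with Brown's chain-level result (Theorem~$(4.2)$ of \cite{B}). Your sketch via Brown's theorem and the transfer along the power series connection of Section~\ref{subsection:psc} (Theorem \ref{thm:psctocoalgebramap}) is a correct and natural way to fill that gap, and your remark that simple connectivity is what makes the comparison work is on point.

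One small caveat: what is really needed is not \emph{collapse} of an Eilenberg--Moore spectral sequence but rather that the homology-level twisted complex $(H_*(M)\otimes H_*(G),\partial_\tau)$ is quasi-isomorphic to the chain-level one $(C_*(M)\otimes C_*(G),\partial_{\tilde\tau})$. This follows from the deformation retractions on both factors (for $G$ a connected Lie group $C_*(G)$ is formal, and for $M$ simply connected the power series connection supplies the retraction on $C_*(M)$) together with the compatibility of $\tau$ and $\tilde\tau$ under these retractions. Framing it as a spectral-sequence collapse slightly misstates the mechanism, though the conclusion is the same.
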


\begin{proof}

This is an application of Theorem \ref{thm:twistedcoalgebra}.

\end{proof}

The twisting cochain is more easily defined as $\tau:H^*(G) \rightarrow H^*(M)$, so the dual $A_\infty$ algebra can be made more explicit.  Note that if $C_*$ is a $C_\infty$ coalgebra, $H_*$ a Hopf algebra, and a twisting cochain $C_* \rightarrow H_*$ has its image in the primitives, then its dual map $\tau:H^* \rightarrow C^*$ has the property that $ker(\tau) \cup Prim(H_*) = H_*$.  This property of $\tau$ implies the derivation property dual to the statement that multiplying by a primitive element is a coderivation.  It is described in Figure \ref{figure:primitivederivation}.

\begin{figure}
\includegraphics[width=3.5in]{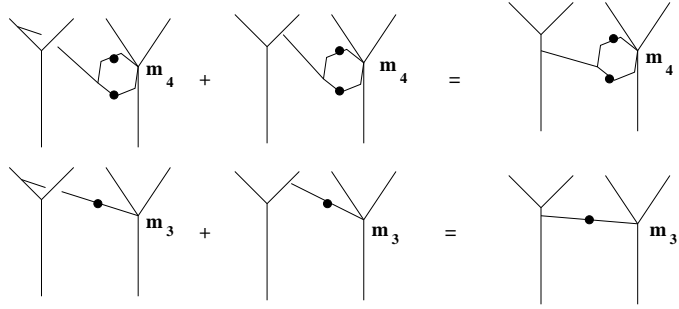}  \\

\caption{This identity is a consequence of the fact that $ker(\tau) \cup Prim(H_*) = H_*$.  The figure is dual to Figure \protect{ \ref{figure:primitivecoderivation}}.}
\label{figure:primitivederivation}
\end{figure}

We define an $A_\infty$ algebra on $H^*(G) \otimes H^*(M)$, where we view $H^*(G)$ as a Hopf algebra and $H^*(M)$ as a $C_\infty$ algebra.  The map $\partial_{\tau}:H^*(G) \otimes H^*(M) \rightarrow H^*(G) \otimes H^*(M)$ is given by
\begin{eqnarray*}
\partial_\tau(U_{i_1} \cdots U_{i_n} \otimes a) &=& \sum_{\sigma \in S_n} U_{i_{\sigma(1)}} \cdots U_{i_{\sigma(n-1)}} \otimes m_2(p_{i_{\sigma(n)}}\otimes a ) \\
&\,& +\sum_{\sigma \in S_n} U_{i_{\sigma(1)}} \cdots U_{i_{\sigma(n-2)}} \otimes m_3 (p_{i_{\sigma(n-1)}} \otimes  p_{i_{\sigma(n)}} \otimes a) \\
&\,& \vdots.
\end{eqnarray*}

The map $m_2^{\tau}:(H^*(G) \otimes H^*(M))^{\otimes 2} \rightarrow H^*(G) \otimes H^*(M)$ is given by
\begin{eqnarray*}
&\,&m_2^\tau(U_{i_1} \cdots U_{i_k} \otimes a , U_{i_{k+1}} \cdots U_{i_{n}} \otimes b)\\
 &=& U_{i_1} \cdots U_{i_n} \otimes m_2(a \otimes b) \\
&\,& +\sum_{\sigma \in S_n} U_{i_{\sigma(1)}} \cdots U_{i_{\sigma(n-1)}} \otimes m_3(p_{i_{\sigma(n)}} \otimes a \otimes b ) \\
&\,& +\sum_{\sigma \in S_n}  U_{i_{\sigma(1)}} \cdots U_{i_{\sigma(n-2)}} \otimes m_4(p_{i_{\sigma(n-1)}} \otimes p_{i_{\sigma(n)}} \otimes a \otimes b ) \\
&\,& \vdots.
\end{eqnarray*}

\begin{prop}
Let $\{p_i\}$ be the characteristic classes of a principal $G$ bundle $P \rightarrow M$, with $M$ simply connected and $G$ a connected Lie group.  The maps $\{ \partial_\tau, m_2^\tau, \cdots \}$ define an $A_\infty$ algebra on $H^*(G) \otimes H^*(M)$ whose linear cohomology is isomorphic to $H^*(P)$.
\end{prop}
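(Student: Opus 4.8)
The plan is to obtain this proposition as the linear dual, over $\Q$, of the $A_\infty$ coalgebra on $H_*(M)\otimes H_*(G)$ established in the preceding theorem of this subsection (itself an instance of Theorem \ref{thm:twistedcoalgebra}). Since $G$ is homotopy equivalent to a compact Lie group and $M$ is of finite type, each of $H_*(G)$, $H_*(M)$, and hence $H_*(M)\otimes H_*(G)$, is finite dimensional in each degree, so that over $\Q$ the linear dual satisfies
$$\bigl(H_*(M)\otimes H_*(G)\bigr)^* \;\cong\; H^*(M)\otimes H^*(G)\;\cong\; H^*(G)\otimes H^*(M),$$
the last isomorphism being the symmetry of the tensor product with the usual Koszul sign. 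First I would record that under this identification all structure data dualizes termwise: the $C_\infty$ coalgebra maps $\{c_n\}$ on $H_*(M)$ become the $C_\infty$ algebra maps $\{m_n\}$ on $H^*(M)$; the strict Hopf multiplication on $H_*(G)$ becomes the shuffle/deconcatenation Hopf comultiplication on $H^*(G)$ of the previous section; and the twisting cochain $\tau:H_*(M)\to H_*(G)$, $p_i^*\mapsto U_i^*$, dualizes to $\tau:H^*(G)\to H^*(M)$, $U_i\mapsto p_i$.

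Next I would verify that, with these identifications in force, the explicit maps $\{\partial_\tau, m_2^\tau,\dots\}$ written out above are exactly the linear transposes $\{(\partial_\tau)^*, (c_2^\tau)^*,\dots\}$ of the twisted $A_\infty$ coalgebra maps on $H_*(M)\otimes H_*(G)$. The one genuinely new point is the role of the bracket (conjugation) action: dualizing the statement that multiplication by a primitive element is a coderivation yields precisely the derivation-type identity recorded in Figure \ref{figure:primitivederivation}, which holds because the dual twisting cochain satisfies $\ker(\tau)\cup Prim(H^*(G)) = H^*(G)$. Granting the termwise matching, I would invoke the principle that the linear dual of an $A_\infty$ coalgebra is an $A_\infty$ algebra: the square-zero derivation on $T\bigl((H_*(M)\otimes H_*(G))[-1]\bigr)$ encoding the $c_n^\tau$ transposes to a square-zero coderivation on the tensor coalgebra of the dual space, whose components are the $m_n^\tau$. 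This immediately gives the $A_\infty$ algebra structure on $H^*(G)\otimes H^*(M)$.

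Finally, for the cohomology computation I would note that the linear part $\partial_\tau$ of this $A_\infty$ algebra is, by the previous paragraph, the transpose of the differential $\partial_\tau$ on $H_*(M)\otimes H_*(G)$, whose homology is $H_*(P)$ by the preceding theorem. Since we work over the field $\Q$, the universal coefficient theorem identifies the cohomology of the dual complex with the dual of the homology, so
$$H^*\bigl(H^*(G)\otimes H^*(M),\,\partial_\tau\bigr)\;\cong\; H_*(P)^* \;\cong\; H^*(P),$$
which is the asserted isomorphism. The main obstacle I expect is the sign bookkeeping of the second paragraph: one must check that the Koszul signs produced by transposing the $c_n^\tau$ (those coming from the degree shifts, from the tensor swap $H_*(M)\otimes H_*(G)\leftrightarrow H^*(G)\otimes H^*(M)$, and from the symmetrized bracket $[\,,\,]$) agree with the signs implicit in the explicit formulas for $\partial_\tau$ and $m_2^\tau$, and that the iterated-bracket terms dualize to the shuffle-coproduct terms. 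Once the dual twisting cochain is confirmed to satisfy the derivation identity of Figure \ref{figure:primitivederivation}, every remaining relation is the transpose of a relation already established, so no new homotopy-algebra identity needs to be proved from scratch.
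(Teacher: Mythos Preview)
Your proposal is correct and follows essentially the same approach as the paper: both argue that the proposition is the algebraic dual of Theorem \ref{thm:twistedcoalgebra}, with the identity in Figure \ref{figure:primitivederivation} playing the role that the primitive-coderivation identity played there. The only minor difference is that the paper, after stating the dualization, also writes out $\partial_\tau^2=0$ directly to exhibit how the $C_\infty$ algebra relations on $H^*(M)$ force the compositions to vanish, whereas you rely purely on the dualization principle; your parenthetical ``(conjugation)'' is a slip, since this is the left-multiplication case of Section \ref{subsection:multiplication}, but it does not affect the argument.
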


\begin{proof}
This is the algebraic dual of Theorem \ref{thm:twistedcoalgebra}.  One can see that $\partial_\tau^2=0$ directly, as well.
\begin{eqnarray*}
&\,&\partial_\tau^2(U_{i_1} \cdots U_{i_n} \otimes a) \\
 &=& \sum_{\sigma'} \sum_{\sigma } U_{i_{\sigma' \sigma(1)}} \cdots U_{i_{\sigma'\sigma(n-2)}} \otimes m_2(p_{i_{\sigma' }} \otimes m_2(p_{i_{\sigma(n)}} \otimes a) ) \\
&\,&+ \sum_{\sigma'} \sum_{\sigma } U_{i_{\sigma' \sigma(1)}} \cdots U_{i_{\sigma'\sigma(n-3)}} \otimes m_3(p_{i_{\sigma'\sigma(n-2)}} \otimes p_{i_{\sigma' \sigma(n-1)}} \otimes m_2(p_{i_{\sigma(n)}} \otimes a) ) \\
&\,& \vdots \\
&\,& +\sum_{\sigma' } \sum_{\sigma } U_{i_{\sigma' \sigma(1)}} \cdots U_{i_{\sigma'\sigma(n-3)}} \otimes m_2(p_{i_{\sigma' (n-2)}} \otimes m_3(p_{i_{\sigma(n-1)}} \otimes p_{i_{\sigma(n)}} \otimes a) ) \\
&\,&+ \sum_{\sigma' } \sum_{\sigma } U_{i_{\sigma' \sigma(1)}} \cdots U_{i_{\sigma'\sigma(n-4)}} \otimes m_3(p_{i_{\sigma'\sigma(n-3)}} \otimes p_{i_{\sigma' \sigma(n-2)}} \otimes m_3(p_{i_{\sigma(n-1)}} \otimes p_{i_{\sigma(n)}} \otimes a) ) \\
&\,&\vdots.
\end{eqnarray*}
Note that on the $H^*(M)$ side of the tensor, there are compositions of $m_i$ and $m_j$.  The $C_\infty$ algebra relation on $H^*(M)$ states that such sums will be zero.

For the higher identities, we use the identity in Figure \ref{figure:primitivederivation} and follow the same argument that was made in Theorem \ref{thm:twistedcoalgebra}.
\end{proof}

\subsection{$A_\infty$ coalgebra on  $H_*(M) \otimes_\tau H_*(G)$ using conjugation action}

The conjugation action of $H_*(G)$ on itself is trivial when $G$ is a connected Lie group.  This shows that there is no twisting needed for the $A_\infty$ coalgebra on $H_*(M) \otimes H_*(G)$.  That is, the coalgebra  is given by $\{c_n \otimes \Delta^n\},$ where $\{c_n\}$ is the $C_\infty$ coalgebra  given by the diagonal map and $\Delta^n$ is the $n$-fold composition of the coproduct on $H_*(G)$.

\subsection{$A_\infty$ algebra on $H_*(M) \otimes_\tau H_*(G)$ using conjugation action}

Since the conjugation action is trivial, the $A_\infty$ algebra  on $H_*(M) \otimes H_*(G)$ is given by $\{m_n \otimes m_G\}$, with no twisting terms.  Here, $\{m_n\}$ is the $C_\infty$ algebra on $H_*(M)$ given by the intersection product and $m_G$ is the associative multiplication in $H_*(G)$.

\end{document}